\newtheorem{corollary}{Corollary}
\newtheorem{lemma}{Lemma}
\newtheorem{definition}{Definition}
\newtheorem{claim}{Claim}
\DeclareRobustCommand{\qed}{%
  \ifmmode 
  \else \leavevmode\unskip\penalty9999 
  \hbox{}\nobreak\hfill
  \fi
  \quad\hbox{\qedsymbol}}
\newcommand{\openbox}{\leavevmode
  \hbox to.77778em{%
  \hfil\vrule
  \vbox to.675em{\hrule width.6em\vfil\hrule}%
  \vrule\hfil}}
\newcommand{\qedsymbol}{\openbox}
\newenvironment{proof}[1][\quad\proofname]{\par
  \normalfont
  \topsep6\p@\@plus6\p@ \trivlist
  \item[\hskip\labelsep\itshape
    #1.]\ignorespaces
}{%
  \qed\endtrivlist
}
\newcommand{\proofname}{Proof}
\newcommand{\sfunction}[1]{\textsf{\textsc{#1}}}
\algrenewcommand\algorithmicforall{\textbf{foreach}}
\algrenewcommand\algorithmicindent{.8em}
\journal{}
\begin{document}

\begin{frontmatter}


\title{Shareability Network Based Decomposition Approach for Solving Large-scale Single School Routing Problems}



\author[mitcee]{Xiaotong Guo\corref{cor}}
\ead{xtguo@mit.edu}
\author[cornellcee]{Samitha Samaranayake}
\ead{samitha@cornell.edu}

\address[mitcee]{Department of Civil and Environmental Engineering, Massachusetts Institute of Technology, Cambridge, MA 02139, USA}
\address[cornellcee]{School of Civil and Environmental Engineering, Cornell University, Ithaca, NY 14853, USA}
\cortext[cor]{Corresponding author}

\begin{abstract}
We consider the Single School Routing Problem (SSRP) where students from a single school are picked up by a fleet of school buses, subject to a set of constraints.
The constraints that are typically imposed for school buses are bus capacity, a maximum student walking distance to a pickup point, and a maximum commute time for each student.
This is a special case of the Vehicle Routing Problem (VRP) with a common destination. 
We propose a decomposition approach for solving this problem based on the existing notion of a shareability network, which has been used recently in the context of dynamic ridepooling problems. 
Moreover, we come up with a simplified formulation for solving the SSRP by introducing the connection between the SSRP and the weighted set covering problem (WSCP).
To scale this method to large-scale problem instances, we propose i) a node compression method for the shareability network based decomposition approach, and ii) heuristic-based edge pruning techniques that perform well in practice. 
We show that the compressed problem leads to an Integer Linear Program (ILP) of reduced dimensionality that can be solved efficiently using off-the-shelf ILP solvers. Numerical experiments on the synthetic Boston Public School (BPS) instances are conducted to evaluate the performance of our approach.
Meanwhile, our proposed SSRP formulation allows a natural extension for introducing alternate transportation modes to students, which effectively reduces the number of buses needed for each school and leads to a 15\% cost reduction on average.
Moreover, two state-of-art large-scale SSRP solving techniques are compared with our proposed approaches on benchmark networks and our methods outperform both techniques under a single school setting.

\end{abstract}

\begin{keyword}
Single School Routing Problem \sep Shareability Network \sep Decomposition Approach.
\end{keyword}

\end{frontmatter}


\section{Introduction}
\label{sec:intro}
%
%
%
%

According to the American School Bus Council, nearly 480,000 school buses transported 25 million students to and from school and school-related activities every school day in 2010~\cite{ASBC}.  Meanwhile, based on a report from the Nation Center for Education Statistics, 23 billion dollars were spent on public school transportation during the academic year 2013-2014, which is nearly 5 percent of the total expenditures for public schools \cite{DIGEST}. Every dollar spent on transporting students is a dollar lost for direct spending to improve the education of students. Therefore, an efficient and economical operation of school bus systems is of significant importance to school districts that are trying to make the most of their limited education budgets.

The major costs associated with operating a school bus service are the capital and operational costs of the buses and wages of drivers. 
Thus, an efficient solution will serve students by using the fewest buses possible\footnote{minimizing the total distance traveled is a secondary objective.}. 
Under the single school setting, this needs to be done subject to getting everyone to the school on time and not making some students spend a very long time sitting on a bus (e.g. one hour maximum riding time in Boston). 
This leads to the so-called Single School Routing Problem (SSRP). 

The SSRP is a generalization of the metric Traveling Salesman Problem (TSP) and a special case of the Vehicle Routing Problem (VRP), both of which are NP-hard problems~\cite{VRP}. 
While the metric TSP has a number of good approximation techniques for obtaining provable guarantees on the solution accuracy, the VRP and SSRP problems are harder to approximate and typically solved using heuristic techniques.  
Therefore, state-of-the-art methods for solving SSRP can only solve small-scale problems optimally. To solve the SSRP at scale, the problem is typically formulated as an Integer Linear Program (ILP) and solved using different heuristics techniques~\cite{PARK2012204,RIERALEDESMA2012391,SCHITTEKAT2013518}. 
One limitation of these approaches is that they lead to high dimensional ILP problems that have extremely large decision spaces, and are hard to solve well at-scale even with very sophisticated heuristic techniques.

This paper proposes a new approach for solving the SSRP using a decomposition techniques based on the notion of a shareability network~\cite{Santi13290}. Compared to classical SSRP approaches, our decomposition leads to a much simpler ILP formulation that can be solved more efficiently at scale. Our approach utilizes the following steps:

\begin{itemize}
\item Decoupling the bus routing and student matching problems via the construction of a shareability network and a student-trip assignment graph.
\item Using a node compression technique for the shareability network by assigning students to bus stops subject to maximum walking constraints. 
\item Using a set of heuristic-based edge pruning techniques for the shareability network to delete edges and compress the feasible bus routing set.
\end{itemize}

Steps described above lead to a much simpler ILP. 
For extreme large-scale problems, node compression and edge pruning techniques for the shareability network can be combined with the traditional large-scale ILP heuristics to obtain solutions more efficiently (column generation for instance).

Furthermore, our approach also naturally allows for incorporating alternate transportation modes in the SSRP. For example, the model can assign some students to an external travel mode (e.g., school district contracts with private transportation providers or public transportation systems), which leads to a more efficient school bus schedule and in particular can reduce the number of buses needed. Alternate modes have been utilized by the Boston Public Schools (BPS) when designing school bus schedules, where students in 6-th grade and above have options to receive a discounted MBTA\footnote{Massachusetts Bay Transportation Authority (MBTA) is the transit agency operating public transportation services in Greater Boston area.} card and take public transportation to schools~\cite{BPS_principle}.\\

\noindent The contributions of this article can be summarized as follows:
\begin{enumerate}
\item Modeling the SSRP using the shareability network framework (used in high-capacity ridepooling context), defining the corresponding student-trip graph and formulating the corresponding ILP problem.
\item Connecting SSRP with the weighted set covering problem (WSCP) and simplifying the ILP for solving SSRPs.
\item Showing that techniques used in high-capacity ridepooling with the shareability network can not be applied to the SSRP directly, due to the density of the resulting shareability network, and developing network compression techniques to improve the tractability of the problem. 
\item Displaying numerical results to validate performances of our approach in solving large-scale SSRP problems efficiently. 
Conducting benchmark comparisons against two different state-of-the-art approaches for solving SSRP problems and showing the relative performance of our proposed methods.
\item Generalizing the standard SSRP to a SSRP with alternate modes and showing how our approach naturally extends to this setting. System-wide savings have been found when introducing alternate modes in SSRPs.
\end{enumerate}

The remainder of the article is organized as follows. 
Section \ref{sec:liter} reviews the related literature. 
Section \ref{sec:problem} provides basic definitions for the SSRP and the generalization to multiple modes. 
The model formulation for our decomposition approach via a compressed shareability network is shown in Section \ref{sec:method}. 
Section \ref{sec:experiments} describes numerical experiments, benchmark comparisons and sensitivity analyses for our proposed approach.
Finally, Section \ref{sec:disc} recaps the main ideas of this work and lists limitations and future research directions.

\section{Literature Review}
\label{sec:liter}

The SSRP is a sub-problem of the classic School Bus Routing Problem (SBRP), which has been studied since 1969 when Newton and Thomas first proposed a method to generate school bus routes and schedules~\cite{NEWTON196975}. 
\citet{PARK2010311} did a broad review of the SBRP prior to 2010.
\citet{ELLEGOOD2020102056} conducted a comprehensive review of the SBRP during the past decade and pointed out contemporary trends and research directions.
The SBRP is decomposed into five sub-problems including bus stop selection, bus route generation, bus route scheduling, school bell time adjustment and strategic transportation policy.
The primary focus of this article is on the bus route generation aspects of the SBRP under a single school context\footnote{We also consider bus stop selection, but this is not the primary focus of our work.}, which we will refer to as the SSRP in this paper.


Different problem settings for the SBRP have been considered throughout the recent literature.
Problem settings of the SBRP depend on the number of schools (single school or multiple school), the service environment (urban, rural or both), the bus fleet (homogeneous or heterogeneous), objectives and constraints.
The typical objective of the SBRP is cost minimization and the common constraints considered are bus capacity, time windows and maximum ride time. 
The detailed problem setting in this paper will be presented in the next section.

For the single school SBRP, \citet{Bekta2007} proposed an ILP model based on the open vehicle routing problem (OVRP), in which vehicles do not return to the depot after serving the last demand. They solve the real-world single school SBRP for transporting students of an elementary school in central Ankara, Turkey.
They considered a capacity constraint for vehicles and a maximum travel distance constraint for students, and an objective of minimizing the bus operating cost. 
This paper provided a basic mathematical formulation of the SBRP. 
\citet{Sghaier_Guedria_Mraihi} proposed several modified genetic algorithms to solve the single school SBRP with capacity and maximum travel distance constraints under an urban setting.  
Performances of different algorithms were tested on a simulated instance with 519 students and 30 stops.

In a multiple school SBRP setting, mixed load routing, where one school bus can transport students from multiple schools, is an option that is sometimes considered.
\citet{Ellegood_Campbell_North_2015} utilized a continuous approximation model to evaluate the condition under which the mixed load routing was beneficial to schools.
They showed that the mixed load routing strategy was most beneficial for large school districts where schools were close to each other and a large percentage of bus stops were shared by students from different schools.
\citet{PARK2012204} also developed a mixed load algorithm for the multi-school SBRP. 
The problem was modeled using an ILP and solved by a post-processing algorithm applied to a single-school load solution. 
The proposed algorithm was an improvement on the mixed load algorithm provided by \citet{BRACA1997}, which addressed the New York City school bus routing problem. 


The literature on solving large-scale SBRPs are dominated by heuristic approaches. 
\citet{RIERALEDESMA2012391} solved the large-scale SBRP by modeling it as an instance of the multi-vehicle traveling purchaser problem, which is a generalization of the VRP. 
The LP-relaxation method was used to efficiently solve the high dimensional ILP and a heuristic algorithm was proposed to round the fractional results. This approach was tested by using synthetic data and shown to solve instances with up to 125 students. 
\citet{SCHITTEKAT2013518} proposed a sophisticated ILP considering both bus stop selection and bus route generation simultaneously and used a metaheuristic approach to solve the problem. 
The metaheuristic approach contains two steps: i) a route construction phase that uses a greedy randomized adaptive search procedure to compute sub-optimal initial solutions, and ii) an improvement phase where a variable neighborhood descent method is used to ensure a local optimum in all neighborhoods.
The method could produce high quality solutions within one hour for problems of up to 80 stops and 800 students. Generated instances from this article were used as one of the benchmarks for testing our proposed approaches.  

More recently, \citet{Shafahi_Wang_Haghani_2018} solved the multi-school SBRP by utilizing a Minimum Cost Matching with Post Improvement (MCMPI) algorithm, which was a two-stage metaheuristic approach including a cost-minimizing trip generation algorithm and a post-improvement simulated annealing algorithm.
\citet{Miranda_2018} utilized an iterated local search approach to solve the multi-load SBRP, which extended the mixed load setting by allowing students to be picked up and dropped off simultaneously. 
\citet{Sales_Melo_Bonates_Prata_2018} proposed a memetic algorithm (a type of genetic algorithm) to solve a heterogeneous fleet SBRP, where stop generation, route generation and stop selection problems were considered.
\citet{Mokhtari_Ghezavati_2018} designed a bi-objective ant colony optimization algorithm to solve the SBRP with mixed loads. 
Their proposed algorithm minimized both the number of buses and the average travel time of students.

In one of the most high profile recent SBRP results, \citet{MIT_SBRP} proposed an optimization model for the School Time Selection Problem (STSP), which is a generalization of the school bus routing problem that includes reusing the same bus fleet over multiple rounds of trips (e.g. for a school with a 7:30 am start time followed by one with a 8:30 am start time). A state-of-the-art school bus routing algorithm, named BiRD (Bi-objective routing decomposition), was proposed.
The BiRD algorithm consists of generating single-school bus routes as sub-problems and combining sub-problems via mixed-integer optimization to identify a trip-by-trip itinerary for each bus in the fleet.
The implementation of their approach was claimed to lead to a $\$$5 million annual cost saving for Boston Public Schools.
The single school bus routing component of BiRD algorithm will serve as the second benchmark to test our proposed methods in the experiments section.

In summary, most of recent papers used ILP as a basic approach and concentrated on proposing heuristic techniques to improve efficiency for solving the ILP.
To improve the efficiency and accuracy of current approaches, this work proposes a shareability network based decomposition approach to solve large-scale SBRPs under a single school context and conducts experiments based on synthetic instances derived from real-world dataset.

Our approach for modeling the SSRP is an extension of techniques used for  dynamic high-capacity ridepooling problems~\cite{Alonso-Mora462}, which is a special case of the dynamic capacitated VRP with time windows.
In the ridepooling context, \citet{Alonso-Mora462} proposed a decomposition approach via the shareability network to got a lower dimensional ILP that was computationally tractable, and this approach is extended to the SSRP in this paper. 
The notion of the shareability network, first described by \citet{Santi13290}, is utilized to efficiently compute optimal sharing strategies on a large dataset. There have been many follow up works to~\cite{Alonso-Mora462} that apply the idea of a shareability network to ride-pooling problems~\cite{Kucharski2020, Tafreshian2020, Luo2021, Syed2019, Simonetto2019}. For more details aboutride-pooling problems, we also refer readers to the survey paper by \citet{Wang_Yang_2019}.



A preliminary version of this work was presented at the 2018 Intelligent Transportation Systems Conference~\cite{Xiaotong}. In this extended article, we introduce the following new contributions: i) More advanced network compression techniques. In particular, we propose an algorithm to compensate for the optimality loss induced by the edge pruning technique; ii) A generalization that allows for solving the SSRP with alternate modes; iii) The connection between the weighted set cover problem (WSCP) and SSRP; and iv) A significantly extended section on synthetic BPS experiments and comparisons with two state-of-the-art algorithms to demonstrate performances of our proposed methods.

\section{Problem Formulation}
\label{sec:problem}

\begin{table}[p]
\centering
\caption{Notations used in this paper}
\label{tab:var}
\begin{tabular}{ll}
\hline
\underline{Problem Formulation}\\
$G_r=(V_r,E_r)$ & Road network $G_r$ with vertex set $V_r$ representing locations, and edge \\ 
& set  $E_r$ representing road segments\\
$d_{ij}, t_{ij}$ & Shortest path distance and travel time between vertices $i,j \in V_r$\\
$S$ & Set of students \\
$B$ & Set of vehicles (buses) \\
$M$ & Set of potential bus stops \\
$D$ & Set of students residence locations \\
$A$ & Set of alternate transportation modes \\
$C$ & Capacity of school buses \\
$t^{max}$ & Maximum riding time on buses for students \\
$t^{delay}_m$ & Delay time for picking up students at bus stop $m$ \\ 
$v_0,v_d$ & Bus depot vertex and school vertex \\
$d_s^{max}$ & Maximum walking distance for student $s$ \\
$N_s$ & Set of reachable bus stops for student $s$ \\
$N$ & Union of bus depot location, school location, potential bus stops \\& locations and student residence locations \\
$\alpha_{C_1}^b$ & Cost for operating (owning or leasing) a bus per day including labor\\
$\alpha_{C_2}^b$ & Cost for operating a bus per mile \\
$\alpha_{C}^a$ & Cost for taking an alternate mode $a \in A$ per mile\\
$x_{ijk}$ & Binary variable for whether bus $k$ travel from vertex $i$ to $j$ \\
$y_{ik}$ & Binary variable for whether bus $k$ visits vertex $i$ \\
$z_{isk}$ & Binary variable for whether bus $k$ picks up student $s$ at vertex $i$\\
$u_{sa}$ & Binary variable for whether student $s$ takes mode $a$ to school\\
\underline{Problem Decomposition}\\
$G=(V,E)$ & Shareability network with vertex set $V$ representing requests, and \\ & edge set $E$ representing shareability between requests\\
$G_{ST}=(V_{ST},E_{ST})$ & Student-Trip graph with vertex set $V_{ST}$ representing the union of \\ & students and feasible trips, and edge set $E_{ST}$ representing whether \\& students are involved in trips \\
$\tau \in T,\tau_b \in T_b,\tau_a^s \in T_a$ & Set of feasible trips, feasible bus trips and feasible trips for an \\& alternate mode $a$ \\
$S(\tau)$ & Set of students who participate in the trip $\tau$ \\
$C_{\tau}$ & Travel distance for any feasible trip $\tau \in T$ \\
$L(\tau)$ & Number of students in a feasible trip $\tau \in T$ \\
$x_{s\tau}$ & Binary variable for whether student $s$ is assigned to trip $\tau$  \\
$y_{\tau}$ & Binary variable for whether trip $\tau$ is chosen in the optimal trip set\\
\underline{Network Compression}\\
$x_m$ & Binary variable for whether picking the potential bus stop $m \in M$  \\
$\Bar{t}_{ij}$ & Adjusted travel time between any two nodes $i$ and $j$ \\
$\delta_{ij}$ & Detour factor for any pair of nodes $i$ and $j$\\
$n(m)$ & Number of students at any bus stop $m \in M$ \\
$\beta$ & Heuristic parameter for the edge pruning technique\\
$\gamma$ & Heuristic parameter for the $\gamma$-quasi-clique process in edge pruning\\
\hline
\end{tabular}
\end{table}

We first provide a formal definition of the Single School Routing Problem (SSRP). 
The problem description will be consistent throughout the paper and the notations used are listed in Table \ref{tab:var}.

Let $G_r(V_r,E_r)$ denote the road network. For any pair of nodes $i, j \in V_r$, $d_{ij}$ represents the shortest path distance between $i,j$, and $t_{ij}$ indicates the corresponding travel time.
Consider a set of students $S$ who need to be transported to a single destination (school) $v_d \in V_r$ with a homogeneous fleet of school buses $B$, in which each school bus has capacity $C$ students. 
We define that each student $s \in S$ is located at some location\footnote{The graph can be augment to model pickups (bus stops) between vertices if needed.} $v_s \in V_r$ and the set $D$ indicates students pickup locations, $D \subseteq V_r$. 
Moreover, we let $M$ represent the set of potential bus stops, where $M \subseteq V_r$. The delay time for picking up students at a bus stop $m \in M$ is denoted by $t^{delay}_m$. Finally, to model students' travel without school buses, we define a set of alternate modes $A$, each student $s \in S$ can either take a school bus or an alternate transportation mode $a \in A$ to the school.

Let $\alpha_{C_1}^b$ be the cost for leasing (or amortized capital cost of owning) a bus per day including the labor cost for drivers\footnote{The driver labor is a fixed cost that is independent of distance traveled per bus and is typically a dominating expense.}, $\alpha_{C_2}^b$ be the operating cost per bus per mile, and $\alpha_{C}^a$ be the cost for taking alternate mode $a$ per mile\footnote{For the simplicity, we assume the cost for the alternate mode $a$ has a linear relationship with the trip distance. This can easily be replaced by a more complex cost function.}.
The cost for taking alternate mode $a$ can be student-specific, where $\alpha_{C}^a = \infty$ indicates that students can only take buses to school.
The objective of this problem is to minimize the total cost for the school bus schedules of a single school.
Figure \ref{problem} illustrates an instance of the SSRP. 

\begin{figure*}[!h]
\centering
\includegraphics[scale=0.65]{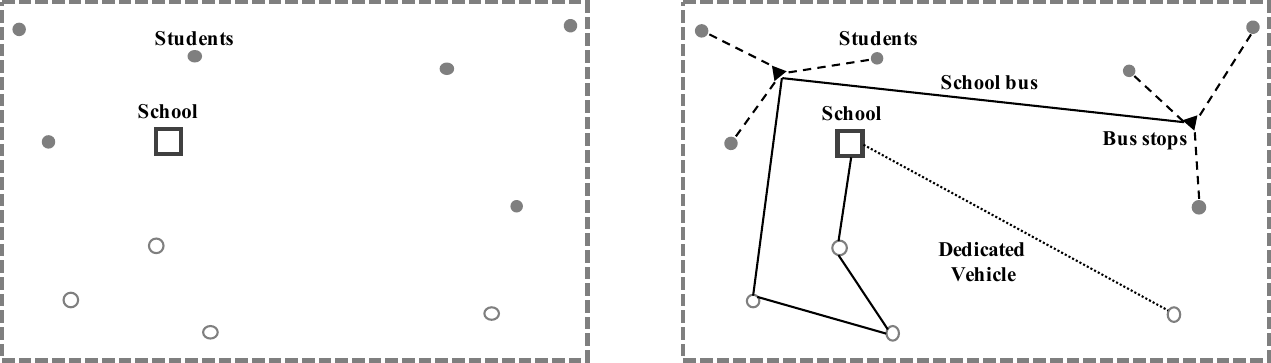}
\caption{Instance of a feasible solution for the SSRP considering the dedicated vehicle with bus capacity $C = 9$. Left figure gives the input for SSRP and right figure gives the results of the problem, which contains the bus routes and bus stops for each student. Circles represent students with door-to-door pickups. }
\label{problem}
\end{figure*}

We enforce the following constraints in the SSRP formulation: 
\begin{enumerate}
\item The maximum riding time any student $s$ can be on the school bus is $t^{\text{max}}$.
\item Each student $s \in S$ has a maximum walking distance $d_s^{\text{max}}$ from their residence to the assigned bus stop. This distance can be student specific and equal to zero if the students need door-to-door pickups. We let $N_s$ represent the set of reachable stops for a student $s$, i.e., $N_s = \{ m \in M \mid d_{v_sm} \leq d_s^{max}\}$.
\item All school buses start at a single pre-specified location $v_0 \in V_r$ and end at the school $v_d$. 
\end{enumerate}

We let $N = D \cup M \cup\{ v_0, v_d\} $ denote the set of pickup locations combined with potential bus stops, bus depot and school location. 
The decision variables for this problem are $x_{ijk}$, $y_{ik}$, $z_{isk}$ and $u_{sa}$, where $x_{ijk} = 1$ if bus $k$ travels from vertex $i$ to $j$ through the shortest path, $y_{ik} = 1$ if bus $k$ visits vertex $i$, $z_{isk} = 1$ if student $s$ is picked up by bus $k$ at vertex $i$ and $u_{sa} = 1$ if student $s$ takes an alternate mode $a$ to the destination. 
Assuming each bus stop or student home address can be visited by at most one bus, the ILP formulation for the SSRP considering alternate modes can be formulated as follows: 

\begin{align}
\min  \quad
& \alpha_{C_1}^b \cdot K +  \alpha_{C_2}^b \cdot \sum_{i \in N}\sum_{j \in N} d_{ij} \sum_{k \in B} x_{ijk} + \sum_{a \in A} \alpha_{C}^a \cdot \sum_{s \in S} u_{sa} d_{v_sv_d}\\
\text{s.t.} \quad
& \sum_{j \in N } x_{ijk} = \sum_{j \in N } x_{jik}  = y_{ik} \quad \forall i \in N \setminus \{ v_0,v_d\}, \forall k \in B \\
& \sum_{j \in N \setminus \{ v_0, v_d\}} x_{v_0jk} = \sum_{i \in N \setminus \{ v_0, v_d\}} x_{iv_dk} \quad \forall k \in B \\
& \sum_{i,j \in Q} x_{ijk} \leq |Q| - 1 \quad \forall Q \subseteq N, \forall k \in B \\
& \sum_{i \in N \setminus \{ v_0, v_d\}} \sum_{j \in N \setminus \{ v_0 \}} (t_{ij} + t^{delay}_{i}) \cdot x_{ijk} \leq t^{max} \quad   \forall k \in B\\
& \sum_{i \in N \setminus \{v_0, v_d\} } \sum_{s \in S} z_{isk} \leq C \quad \forall k \in B\\
& z_{isk} \leq y_{ik} \quad \forall i \in N, \forall s \in S, \forall k \in B \\
& \sum_{a \in A}u_{sa} + \sum_{i \in N_s \cup \{ v_s \}} \sum_{k \in B} z_{isk}  = 1 \quad \forall s \in S \\
& \sum_{k \in B} \sum_{i \in N \setminus \{v_d\}} x_{iv_dk} = K \leq |B| \\
& x_{ijk} \in \{0,1\} \quad \forall i,j \in N, \forall k \in B\\
& y_{ik} \in \{0,1\} \quad \forall i \in N, \forall k \in B \\
& z_{isk} \in \{ 0,1 \} \quad \forall i \in N, \forall s \in S, \forall k \in B \\
& u_{sa} \in \{0,1\} \quad \forall s \in S, \forall a \in A
\end{align}

The objective function (1) minimizes the overall school bus scheduling cost considering the number of buses, vehicle miles travel and alternate mode cost. 
Constraints (2) ensure that if bus $k$ visits pickup location $i$, then there will be a flow entering $i$ and a flow leaving $i$ for bus $k$. 
Constraints (3) impose that a bus entering the destination should also have left the depot.
Constraints (4) enforce sub-tour elimination, i.e. ensures a single connected route for bus $k$.
Constraints (5) consider the maximum travel time for each student by restricting the total travel time for each bus route starting from picking up the first student.
Constraints (6) enforce that the number of students in bus never exceed the capacity $C$. 
Constraints (7) ensure that student $s$ will not be picked up by bus $k$ at vertex $i$ unless bus $k$ visits vertex $i$.
Constraints (8) impose that student $s$ either takes an alternate mode or is picked up by a school bus.
Constraint (9) enumerates the number of non-idle buses and enforces the maximum number of available buses $|B|$.
Constraints (10) - (13) make sure that decision variables are binary.

This ILP formulation provides the optimal school bus schedules for a single school. 
While the problem can be formulated as an ILP, solving large-scale instances of this SSRP is intractable when using off-the-shelf ILP solvers. Therefore, solving SSRPs at scale in a computationally tractable manner requires utilizing some decomposition and heuristic methods. The following section describes our proposed new approach and the corresponding heuristics for solving large-scale SSRPs efficiently.

\section{Methodology}
\label{sec:method}

In this section, we propose a decomposition method based on the notional of a shareability network~\cite{Santi13290} and its application to ridepooling problems~\cite{Alonso-Mora462}, to solve the SSRP. Furthermore, we simplify the ILP for solving the SSRP by identifying its connection to the WSCP.
Finally, we improve the tractability of large-scale instances by introducing network compression techniques that effectively prune the shareability network, as the network can get intractably for large-scale offline problems like SSRPs. 
    
\subsection{Decomposition through the shareability network}

In order to reduce the complexity and dimensionality of the ILP for solving the SSRP, we propose a decomposition method via the shareability network, which consists of several steps leading to an assignment problem that yields a much-simplified ILP.

The shareability network~\cite{Santi13290} is an undirected graph $G_S = (V_S , E_S)$, where $V_S$ corresponds to the set of trips and each edge $(i,j) \in E_S$ indicates that trip $i$ can share a vehicle with trip $j$ under some compatibility constraints. 
The shareability network under the SSRP setting is constructed as follows.
The vertex set $V_S$ designates the set of student locations and each edge $(s_i, s_j) \in E_S$ reflects the fact that students $s_i$ and $s_j$ can share the same school bus (under a desired set of quality of service constraints).
For example, in our setting, students $s_i$ and $s_j$ can share the same bus if both students can be transported to the destination (school) $v_d$ within the maximum riding time $t^{max}$ using the same bus. 
Figure (\ref{share_instance}a) shows an instance of a shareability network for four students. 

\begin{figure}[h]
    \centering
    \subfloat[Shareability network]{{\includegraphics[width=5cm]{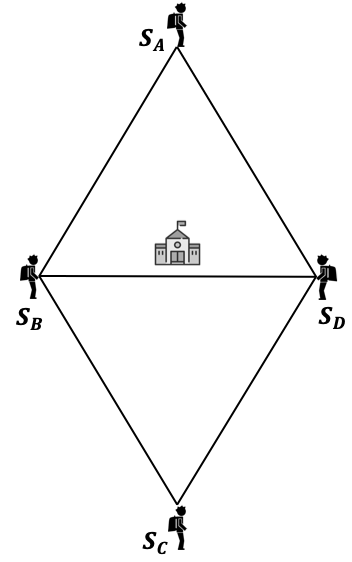} }}%
    \qquad
    \subfloat[Student-trip graph]{{\includegraphics[width=5cm]{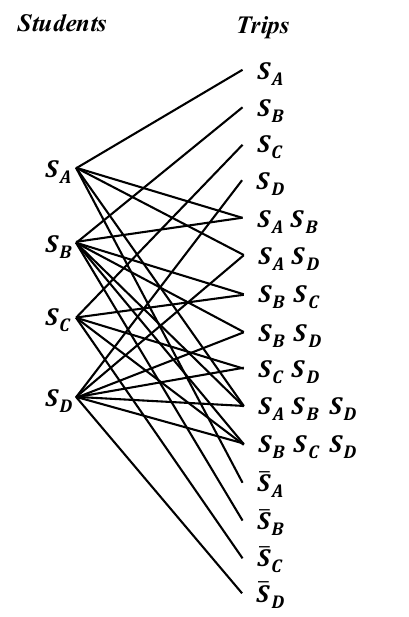} }}%
    \caption{Instance of the shareability network and ST-graph with 4 students. Each student can be assigned their personal bus, share the bus with others or take dedicated vehicles to school (represents by $\Bar{S}$ in the graph). In this instance, there are five feasible pairs of two students and two feasible pairs of three students.}%
    \label{share_instance}
\end{figure}

Next, we establish a bipartite graph $G_{ST} = (V_{ST}, E_{ST})$ where $V_{ST}$ contains a set of students and a set of all possible trips configurations (school bus or alternate modes assignment) based on the shareability network. 
This bipartite graph is referred as the student-trip graph (ST-graph).
The set of feasible trip configurations $T$ includes bus trips $T_b$ and trips $T_a$ for an alternate mode $a$, i.e., $T = \bigcup_{a \in A} T_a \cup T_b$.
Let $S(\tau_b)$ denote the set of students who participate in a feasible bus trip $\tau_b \in T_b$. 
A bus trip $\tau_b \in T_b$ is feasible if the total riding time for each student is less than or equal to the maximum allowed ($t_s \leq t^{max}, ~\forall s \in S(\tau_b)$) and the total number of students in the bus is smaller than its capacity ($|S(\tau_b)| \leq C$). 
For each student $s \in S$, $\tau_a^s \in T_a$ represents a non-school bus trip that student $s$ directly takes via alternate mode $a$ to the school.
The node set $V_{ST}$ is the union of the set of students and the set of feasible trips, i.e., $V_{ST} = S \cup T$, and there will be an edge $e(s,\tau_b) \in E_{ST}$ if $\tau_b \in T_b$, $s \in S(\tau_b)$, and an edge $e(s,\tau_a^s) \in E_{ST}$ for every $\tau_a^s \in T_a$. 
Figure (\ref{share_instance}b) shows an instance of ST-graph corresponding to the shareability network in Figure (\ref{share_instance}a). 

It is worth mentioning that alternate modes are introduced by extending the trip configurations $T$ while keeping the same problem structure as the standard SSRP.
The set of feasible bus trip $T_b$ is generated using the shareability network.
The following observation is typically made to efficiently compute the feasible bus trips in $T_b$ based on the shareability network $G$~\cite{Alonso-Mora462}.

\begin{lemma} ($Lemma\;\text{1}$ in \cite{Alonso-Mora462})
    \label{lemma}
    A trip configuration $\tau_b$ can only be feasible if the set of students $s$ in the trip configuration $\tau_b$ form a clique in the shareability network $G$ (i.e. $\forall s_i, s_j \in S(\tau_b), e(s_i,s_j) \; \text{exists}$). This is a necessary (but not sufficient) condition.\\
\end{lemma}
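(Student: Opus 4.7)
The plan is to argue by contrapositive, or equivalently by direct deduction: take an arbitrary feasible bus trip $\tau_b$ and show that every pair $s_i, s_j \in S(\tau_b)$ is pairwise shareable, so that the edge $e(s_i, s_j)$ must exist in $G$. Since this holds for every pair, $S(\tau_b)$ forms a clique.

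First, I would fix a feasible trip $\tau_b$. By definition, there is a bus route starting at $v_0$, visiting the pickup locations of every student in $S(\tau_b)$, ending at $v_d$, with $|S(\tau_b)| \leq C$ and with the on-bus travel time of each student at most $t^{max}$. Now fix an arbitrary pair $s_i, s_j \in S(\tau_b)$. Consider the restricted route that starts at $v_0$, visits only the pickup locations of $s_i$ and $s_j$ in the same relative order as in $\tau_b$, and then goes to $v_d$, using shortest paths between consecutive stops.

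Next, I would invoke the triangle inequality on the shortest-path metric $(d_{ij}, t_{ij})$ derived from $G_r$. Because the restricted route is obtained from the original by deleting intermediate pickup stops and replacing the resulting detours by direct shortest-path legs, its total travel time from the first of $\{s_i, s_j\}$ to $v_d$ is no larger than in $\tau_b$. Consequently, the time each of $s_i$ and $s_j$ spends on the bus in this restricted tour is at most their time on $\tau_b$, which is at most $t^{max}$. The capacity constraint is trivially satisfied since only two students are now on the bus. Therefore the pair $\{s_i, s_j\}$ alone forms a feasible bus trip, which by the construction of the shareability network means $e(s_i, s_j) \in E_S$.

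Since the pair was arbitrary, every two vertices of $S(\tau_b)$ are joined by an edge of $G$, i.e., $S(\tau_b)$ is a clique. For the ``not sufficient'' clause, I would simply note that pairwise feasibility ignores the capacity bound $C$ and the joint routing cost: a set of three mutually pairwise-shareable students may fail to be feasible jointly once their combined pickups force a detour that violates $t^{max}$ for at least one of them, or once $|S(\tau_b)| > C$. The main (and only real) obstacle is justifying the ``removing stops cannot increase travel time'' step cleanly, which is immediate from the triangle inequality on shortest-path distances; I would state this explicitly rather than leave it implicit.
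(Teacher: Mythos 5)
Your proof is correct: the subsequence-plus-triangle-inequality argument (deleting intermediate stops from a feasible route cannot increase any remaining student's ride time under the shortest-path metric, so every pair in a feasible trip is itself pairwise feasible and hence adjacent in $G$) is exactly the standard justification for this fact. The paper itself does not prove the lemma --- it cites it directly from the ridepooling work of Alonso-Mora et al. --- so your write-up simply makes explicit the argument that the citation leaves implicit, and your remark on non-sufficiency (capacity and joint-detour violations for triples that are pairwise shareable) is also accurate.
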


Given this observation, a potential trip configuration is not feasible if any pair of students in the trip configuration are not connected by edges in the shareability graph $G_S$. 
Thus, if a set of $n$ students ($s_1, \cdots, s_n$) can not form a feasible trip configuration, we know that any trip configuration that includes these students plus another student $s_{n+1}$ will certainly not be feasible. 
Therefore, as in~\cite{Alonso-Mora462}, we construct the set of feasible trip configurations $T_b$ by first considering trips that consist of one student, and progressively consider larger sets only when the smaller set is feasible. 
Algorithm \ref{alg1}\footnote{This extends the technique of enumerating feasible trips based on the shareability network, originally from~(\cite{Alonso-Mora462}), to the school bus routing problem.} describes the details for generating the feasible trip list $T_b$. 
The input function $\sfunction{PathTsp}(\cdot)$ is a black-box for solving the path traveling salesman problem (path-TSP). As this problem is NP-Hard~\cite{Laporte_1992}, we utilize an insertion heuristic based approach for solving the path-TSP problem. Details are provided in the \ref{apend:tsp}. We note that any other efficient Path-TSP heuristics can be substituted for this. 

\begin{algorithm}[p]
\caption{Generating the set of feasible bus trips. Input: the shareability network $G$, the set of students $S$, maximum riding time $t^{max}$, bus capacity $C$, path-TSP solver for any trip $\tau$ with optimal travel time $t^*$ as the output, i.e., $t^* = \sfunction{PathTsp}(\tau)$.}
\label{alg1}
\begin{algorithmic}[1]
\Function{BusTripGeneration}{$G = (V,E),S,t^{max},C, \sfunction{PathTsp}(\cdot)$}
    \State $T_b,T_b^1 \gets \emptyset$ 
    \For {$s \in S$}    \Comment{Generate the trip list with one student}
        \State $\tau \gets \{s\}$
        \State $T_b^1 \gets T_b^1 \cup \{\tau\}$
    \EndFor
    \State $T_b \gets T_b \cup T_b^1$
    \State $k \gets 2$  \Comment{Iterate from trips with two students}
    \While {true}
        \State $T_b^k \gets \emptyset$  \Comment{Initialize the trip list with $k$ students}
        \For {$\tau \in T_b^{k-1}$} 
            \For {$s \in S$ and $s \notin \tau$}
                \State $\tau' \gets \tau \cup \{s\}$ \Comment{Add one more student to the trip with $k-1$ students}
                \If {$\sfunction{CliqueCheck}(\tau,s,G(V,E)) = true$}
                    \If {$\sfunction{FeasibilityCheck}(\tau',t^{max},C,\sfunction{PathTsp}(\cdot)) = true$}
                        \State $T_b^k \gets T_b^k \cup \{\tau'\}$ \Comment{Add feasible trips with $k$ students into the list}
                    \EndIf
                \EndIf
            \EndFor
        \EndFor
        \If{$|T_b^k| = 0$} \Comment{Break when there are no feasible trips with $k$ students}
            \State \textbf{break}
        \EndIf
        \State $T_b \gets T_b \cup T_b^k$; $k \gets k + 1$
    \EndWhile
    \State \textbf{return} $T_b$
\EndFunction
\Function{CliqueCheck}{$\tau,s,G(V,E)$}
    \For{$s' \in S(\tau)$}
        \If {$e(s,s') \notin E$} 
            \State \textbf{return} false
        \EndIf
    \EndFor
    \State \textbf{return} true
\EndFunction
\Function{FeasibilityCheck}{$\tau,t^{max},C,\sfunction{PathTsp}(\cdot)$}
    \State $t^* \gets \sfunction{PathTsp}(\tau)$
    \If {$t^* \leq t^{max}$ and $|S(\tau)| + 1 \leq C$}
        \State \textbf{return} true
    \Else
        \State \textbf{return} false
    \EndIf
\EndFunction
\end{algorithmic}
\end{algorithm}

The last step of our approach is to compute the optimal student-trip assignment given the ST-graph $G_{ST}$, which is formalized as an ILP. The total travel cost $C_{\tau}$ for each trip $\tau \in T$ is calculated from the ST-graph $G_{ST}$, as this is given by the solution of the Path-TSP problem for any feasible trip configurations. 
This gives us all the information needed to formulate an assignment problem based on $G_{ST}$, which assigns all students to trips (if a feasible solution exists) while minimizing the overall school bus scheduling cost. Recall that the total cost consists of the number of buses, vehicle miles traveled and the cost for transporting student via alternate modes.
This student-trip assignment problem can be treated as a special case of the Weighted Set Covering Problem (WSCP).
In the following section, we will establish the connection between the SSRP and the WSCP, and give a simplified ILP formulation for solving the SSRP.

\subsection{Connection between SSRP and WSCP}


\begin{definition}[WSCP]
\label{def:WSCP}
Given a set of $n$ elements $\mathcal{U} =\{e_1,e_2,...,e_n \}$ and $m$ subsets of $\mathcal{U}$, $\mathcal{S}=\{S_1,S_2,...,S_m\}$ with a cost function $c : \mathcal{S} \longrightarrow \mathbb{R} ^{+}$, $c(S_j)$ that denotes the cost of subset $S_j$, the objective is to find a set $\mathcal{A} \subseteq \mathcal{S}$ such that:

\begin{enumerate}
    \item All elements in $\mathcal{U}$ are covered by the set $\mathcal{A}$, and
    \item The sum of costs of subsets in $\mathcal{A}$ is minimized.
\end{enumerate}
\end{definition}

Let $x_S$ be the binary variable for selecting subset $S \in \mathcal{S}$ in the solution $\mathcal{A}$. The WSCP can be formulated as the following ILP:

\begin{align}
{\text{min} } \quad
& \sum_{S\in\mathcal{S}} c(S) \cdot x_S \\
\text{s.t.} \quad
& \sum_{S:e \in S} x_S \geq 1 \quad \forall e \in \mathcal{U}\\
& x_S \in \{0,1\} \quad \forall S \in \mathcal{S}
\end{align}

In the special case of the WSCP where the set $\mathcal{A}$ is a collection of disjoint subsets in $\mathcal{S}$, i.e., 
$$\forall S_i, S_j \in \mathcal{A}, S_i \cap S_j = \emptyset, $$ the problem becomes the Weighted Set Partitioning Problem (WSPP). For the ILP above, constraints (15) become 
\begin{equation}
    \sum_{S:e \in S} x_S = 1 \quad \forall e \in \mathcal{U},
\end{equation}
which imply that each element in $\mathcal{U}$ will be covered by $\mathcal{A}$ exactly once.

In order to build the connection between the SSRP and the WSCP, we first show the correspondence between the SSRP and WSPP.
In the ST-graph of the SSRP, each student $s \in S$ can be treated as an element and the set of elements is $\mathcal{U} = S$.
Each trip configuration $\tau$ serves as a subset of $S$ with trip cost $C_{\tau}$.
The feasible trip configurations $T$ is the collection of subsets $\mathcal{S}$, and the SSRP is equivalent to the WSPP as we are finding a collection of subsets of $\mathcal{S}$ with the minimum cost.
Additionally, the SSRP is a special case of WSPP with two extra conditions on the feasible bus trip configuration list $T_b$.

\begin{claim}
The SSRP is a special case of the WSPP with the following conditions for the bus trip configuration list $T_b$:
\begin{itemize}
    \item \textbf{Downward closed: } $\forall \tau_b \in T_b, \{ \tau_b': \tau_b' \subseteq \tau_b \} \subseteq T_b$.
    \item \textbf{Monotonic cost function: } $\forall \tau_b' \subseteq \tau_b, c(\tau_b) \geq c(\tau_b')$.
\end{itemize}
\end{claim}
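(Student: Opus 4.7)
\medskip

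\noindent\textbf{Proof proposal.} The preceding paragraph already argues the equivalence between the SBRP student-trip assignment problem and a weighted set partitioning instance by identifying students with elements and feasible trips with subsets. What remains is therefore to verify the two structural conditions on $T_b$ separately; the plan is to treat each as a direct consequence of the definition of a feasible bus trip and the metric property of the underlying road network $G_r$.

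For the downward-closed condition, I will fix an arbitrary feasible $\tau_b \in T_b$ and an arbitrary subset $\tau_b' \subseteq \tau_b$, and check that $\tau_b'$ satisfies each of the three requirements baked into feasibility. Capacity is immediate: $|S(\tau_b')| \le |S(\tau_b)| \le C$. The clique requirement from Lemma~1 is inherited for free because any subset of a clique in the shareability network $G_S$ is again a clique. The only nontrivial requirement is the per-student travel-time constraint $t_s \le t^{\max}$ for all $s \in S(\tau_b')$. Here I would argue that an optimal Path-TSP tour on the reduced set $S(\tau_b') \cup \{v_d\}$ is no longer than the tour on $S(\tau_b) \cup \{v_d\}$: one can take the optimal tour on $\tau_b$ and short-cut past each removed student using the triangle inequality on shortest-path distances in $G_r$, producing a feasible tour on $\tau_b'$ whose length, and hence whose per-student in-vehicle time, is bounded above by the corresponding quantities for $\tau_b$.

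For the monotonic cost condition, the same short-cut construction essentially suffices. Because $C_\tau$ is the Path-TSP tour length (up to the constant $\alpha_{C_1}^b$ per trip if one wishes to include the fixed-bus charge), removing a vertex from an optimal tour and splicing past it via triangle inequality yields a feasible tour for $\tau_b'$ of length at most $C_{\tau_b}$, hence $C_{\tau_b'} \le C_{\tau_b}$.

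The main obstacle will be the mismatch between the \emph{exact} Path-TSP cost used in the theoretical argument and the \emph{heuristic} Path-TSP cost $\sfunction{PathTsp}(\cdot)$ actually used in Algorithm~\ref{alg1}: the short-cut argument only guarantees monotonicity for the optimal tour length, whereas a heuristic solver could in principle return a longer tour on a smaller input than on a larger input. The cleanest way to handle this is to state the claim with $C_\tau$ interpreted as optimal Path-TSP cost, and separately note that replacing the heuristic by any routine that is itself monotone under vertex deletion (as is the case for the standard insertion heuristics used here) preserves both conditions in practice. A secondary point I would flag is that the triangle inequality is used implicitly; this is justified because $d_{ij}$ is defined as the shortest-path distance in $G_r$, which is automatically a metric.
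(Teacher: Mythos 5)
Your proposal is correct and follows the same route as the paper's own proof: identify students with elements and feasible trips with subsets, then verify the two structural conditions on $T_b$ directly from the definition of trip feasibility. In fact, your write-up is more complete than the published argument. The paper's proof only observes that a subset of a clique is a clique and then \emph{asserts} that the cost of a sub-trip is smaller; since Lemma~1 explicitly states that the clique condition is necessary but not sufficient for feasibility, the paper never actually verifies that a sub-trip $\tau_b'$ satisfies the capacity and per-student travel-time constraints. Your short-cutting argument --- splice the optimal tour past each removed student and invoke the triangle inequality, which holds because $d_{ij}$ (and $t_{ij}$) are shortest-path quantities --- is exactly the missing step, and it simultaneously delivers the monotone cost condition. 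Your caveat about the heuristic Path-TSP solver is also well taken: monotonicity of $C_\tau$ under vertex deletion is only automatic for the exact tour length, and the paper silently assumes the insertion heuristic behaves monotonically; stating the claim for the optimal cost and noting that the insertion heuristic is monotone in practice is the right way to close that gap. Nothing in your argument is wrong; it makes explicit the steps the paper leaves implicit.
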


\begin{proof}
When generating the feasible bus trips $\tau_b \in T_b$, let $\tau_b \in T_b$ be a feasible bus trip and $T_b^{sub} =\{\tau_b' : S(\tau_b') \subseteq S(\tau_b) \}$ be the collection of sub-trips for $\tau_b$, which $S(\tau_b)$ represents the set of all students in the bus trip $\tau_b$. 
Because students in the trip $\tau_b$ should form a clique in the shareability network, students in the trip $\tau_b'$ also form a clique. 
Meanwhile, the trip cost for $\tau_b'$ is smaller than the cost for $\tau_b$.
Thus, $\forall \tau_b' \in T_b^{sub}$, $\tau_b' \in T_b$, and we have downward closed and monotonic cost function conditions for bus trip configuration list $T_b$.
\end{proof}

The following claim shows the relationship between the WSPP and the WSCP when the collection of subsets $\mathcal{S}$ is downward closed and has a monotonic cost function.

\begin{claim}
\label{clain:WSPP_WSCP}
The WSPP with the following conditions on $\mathcal{S}$ can be solved by the ILP (14) - (16) for the WSCP.
\begin{itemize}
    \item \textbf{Downward closed: } $\forall S \in \mathcal{S}, \{ S' : S' \subseteq S \} \subseteq \mathcal{S}$.
    \item \textbf{Monotonic cost function: } $\forall S' \subseteq S, c(S) \geq c(S')$.
\end{itemize}
\end{claim}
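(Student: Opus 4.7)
The plan is to establish equality between the optimal values of the WSPP and the WSCP ILP (14)–(16) under the two hypotheses on $\mathcal{S}$, and to show that moreover an optimal WSCP solution can be converted in polynomial time into a disjoint (partition) solution of no greater cost. The argument splits naturally into two inequalities.

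One direction is immediate: every feasible WSPP solution is a feasible WSCP solution, because equality in the covering constraint implies the inequality $\geq 1$ in (15). Hence $\mathrm{OPT}_{\text{WSCP}} \leq \mathrm{OPT}_{\text{WSPP}}$. The substance is in the other direction, where I would proceed constructively. Take any feasible WSCP solution $\mathcal{A} = \{S_{i_1}, \ldots, S_{i_k}\}$. If $\mathcal{A}$ is already a partition of $\mathcal{U}$, we are done. Otherwise, pick an element $e \in \mathcal{U}$ and two distinct subsets $S, S' \in \mathcal{A}$ both containing $e$. Replace $S$ by $S \setminus \{e\}$. By \textbf{downward closure}, $S \setminus \{e\} \in \mathcal{S}$, so it is a legitimate decision variable in the ILP; by \textbf{monotonicity of the cost}, $c(S \setminus \{e\}) \leq c(S)$. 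The new collection still covers $\mathcal{U}$ (since $e$ is still covered by $S'$), every other element of $S$ is either still in $S \setminus \{e\}$ or else already covered elsewhere in $\mathcal{A}$, and the total cost has not increased. Iterating this reduction strictly decreases the multiset of coverage multiplicities $\sum_{e \in \mathcal{U}} |\{S \in \mathcal{A} : e \in S\}|$, so the process terminates (after finitely many steps, discarding any subset that has been emptied, which is allowed since $\emptyset \in \mathcal{S}$ by downward closure). The resulting collection $\mathcal{A}'$ is a partition of $\mathcal{U}$ consisting of members of $\mathcal{S}$, with $\sum_{S \in \mathcal{A}'} c(S) \leq \sum_{S \in \mathcal{A}} c(S)$. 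This shows $\mathrm{OPT}_{\text{WSPP}} \leq \mathrm{OPT}_{\text{WSCP}}$, and combining the two inequalities yields equality.

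To conclude, I would observe that the argument in fact produces a WSPP solution from a WSCP solution, not only a bound on the optima: if the ILP (14)–(16) returns an optimal cover $\mathcal{A}^*$, the reduction above yields a partition of the same cost that is optimal for the WSPP. Therefore solving the WSCP ILP is sufficient for solving the WSPP in this restricted class.

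The only subtle point, and the one I would be most careful about, is ensuring that the reduction is well-defined at every step, i.e., that $S \setminus \{e\}$ is always an allowed subset. This is precisely what the downward-closure hypothesis guarantees; without it, the argument would fail because the reduced set might not correspond to any decision variable in the ILP. The monotonicity hypothesis is what prevents the cost from increasing during the reduction. Both hypotheses were already verified for the SBRP trip list $T_b$ in the preceding claim, so the result applies directly to the SBRP and justifies using the simpler WSCP ILP in place of a WSPP formulation.
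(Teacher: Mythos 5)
Your proposal is correct and rests on the same key idea as the paper's proof: downward closure lets you shrink an overlapping set to a smaller member of $\mathcal{S}$, and cost monotonicity guarantees the replacement does not increase the objective, so any optimal cover can be uncrossed into a partition of no greater cost. Your element-by-element reduction with the explicit termination argument (strictly decreasing total coverage multiplicity) and the explicit easy inequality $\mathrm{OPT}_{\text{WSCP}} \leq \mathrm{OPT}_{\text{WSPP}}$ is in fact slightly more careful than the paper's version, which phrases the argument as a contradiction even though it only establishes weak cost inequalities; the substance is the same.
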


\begin{proof}
We prove this claim by the contradiction. 
Let $\mathcal{A}$ be the optimal solution for the WSCP, and suppose there exits two subsets $S_1, S_2 \in \mathcal{A}$, $S_1 \cap S_2 \neq \emptyset$.

Let $S' = S_1 \cap S_2$, $S'_1 = S_1 \setminus S'$ and $S'_2 = S_2 \setminus S'$. 
According to the downward closed condition, $S'_1, S'_2 \in \mathcal{S}$ since $S'_1 \subseteq S_1$ and $S'_2 \subseteq S_2$.
With the monotonic cost function, we have $c(S'_1) \leq c(S_1)$ and $c(S'_2) \leq c(S_2)$. 
We can reduce the total cost for $\mathcal{A}$ by replace either $S_1$ with $S'_1$ or $S_2$ with $S'_2$ in the optimal set $\mathcal{A}$ while still covering all elements.
Thus, the optimal set $\mathcal{A}$ should be a collection of disjoint subsets in $\mathcal{S}$, and the optimal set $\mathcal{A}$ is also optimal for the set partitioning problem with same $\mathcal{U}$ and $\mathcal{S}$.
\end{proof}

Then we give the simplified ILP for solving the SSRP which is generalized from ILP (14) - (16) for solving the WSCP.

\begin{corollary}
The SSRP can be solved by the following ILP:

\begin{align}
{\text{min} } \quad
& \sum_{\tau_b\in T_b} (\alpha_{C_1}^b + \alpha_{C_2}^b \cdot C_{\tau_b}) \cdot y_{\tau_b} + \sum_{a\in A}\sum_{\tau_a \in T_a} \alpha_C^a \cdot C_{\tau_a} \cdot y_{\tau_a} \\
\text{s.t.} \quad
& \sum_{\tau:s \in \tau} y_{\tau} \geq 1 \quad \forall s \in S\\
& y_{\tau} \in \{0,1\} \quad \forall \tau \in T
\end{align}

\end{corollary}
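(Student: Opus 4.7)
The plan is to chain Claim~2 and Claim~3 together and then instantiate the generic WSCP cost function with the SBRP's specific cost structure. Claim~2 establishes that the feasible bus-trip list $T_b$ is downward closed with a monotonic cost function, hence the bus-routing portion of the SBRP fits the hypotheses of Claim~3; Claim~3 in turn guarantees that the WSCP ILP (14)--(16), specialized to this set system, optimally solves the corresponding WSPP. The work remaining is therefore to exhibit (18)--(20) as exactly that specialization extended to alternate modes.

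Concretely, I would set the universe $\mathcal{U} := S$ and the collection of feasible configurations $\mathcal{S} := T = T_b \cup \bigcup_{a\in A} T_a$. For each $\tau \in T$ I would define
\[
c(\tau) \;=\; \begin{cases} \alpha^b_{C_1} + \alpha^b_{C_2}\cdot C_{\tau_b}, & \tau = \tau_b \in T_b,\\ \alpha^a_C \cdot C_{\tau_a}, & \tau = \tau_a \in T_a. \end{cases}
\]
Substituting $c$ into the WSCP objective (14) immediately reproduces (18); substituting $e = s$ and identifying the selection variable $x_S$ with $y_\tau$ in the covering constraint (15) reproduces (19); and the integrality constraints (16) become (20).

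Before invoking Claim~3, I would verify that the enlarged family $T$, not merely $T_b$, inherits the downward closed and monotonic cost conditions. Each alternate-mode trip $\tau_a^s$ is a singleton, so its only non-empty subset is itself, and augmenting $T_b$ with the singletons in $T_a$ preserves both properties trivially. Hence Claim~3 applies to $(T,c)$ and yields the optimality of (18)--(20) for the SBRP.

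The one subtle point, which I expect to be the main obstacle, is reconciling the $\geq 1$ covering constraint (19) with the fact that the SBRP genuinely requires a \emph{partition} (each student takes exactly one trip). The resolution is exactly the argument used inside the proof of Claim~3: by downward closedness and cost monotonicity, any optimal solution in which two chosen trips share a student can be improved (or at least not worsened) by removing the duplicated student from one of them, so at optimum the inequality binds at equality and the WSCP solution coincides with the desired partition. Once this observation is noted, the corollary follows immediately by specializing the WSCP ILP to the data $(T, c)$ described above.
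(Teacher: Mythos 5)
Your proposal is correct and follows essentially the same route as the paper, whose proof is the one-line observation that combining the two preceding claims (SBRP is a WSPP with downward-closed trips and monotone costs; such a WSPP is solved by the WSCP ILP) yields the result. Your additional checks --- that the alternate-mode singletons preserve both conditions and that the covering inequality binds at optimality --- are details the paper leaves implicit, so you have simply made the same argument more explicit.
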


\begin{proof}
Combining Claim 1 and Claim 2, the SSRP can be transformed into a WSCP and solved by the corresponding ILP.
\end{proof}

By building the connection between the WSCP and the SSRP, we transform the ILP (1) - (13) to a much simplified ILP (18) - (20). The ILP (18) - (20) corresponds to a hyper-graph matching problem in the student-trip graph (as in \citet{Alonso-Mora462}), which is constructed based on the shareability network~\cite{Santi13290}.

However, this simplified ILP is still intractable when considering large-scale SSRP instances, since the size of the bus trip set $T_b$ increases exponentially in $|S|$. While in theory this happens in the ride-pooling setting as well, in practice the shareablity network for pooling is sparse due to the quality of service constraints of the system (e.g. low passenger waiting times and detour limitations)~\cite{Alonso-Mora462}. However, in the SSRP context, the shareability network is considerably denser because of looser quality of service constraints. For example, there is no waiting time constraints (students do not specify a pickup time) and the maximum travel detour can be large due to the only limitation being the total trip length $t^{max}$ (usually 1 hour).
The denser shareability network induces many large cliques and thus potential trip configurations to evaluate, which can become computationally challenging.
In order to address this issue, we propose some network compression techniques that induce sparsity in the SSRP shareability network, and thereby improve the computational tractability of the problem.

\subsection{Network compression techniques}

As mentioned above, our shareability network based approach is still intractable for large-scale instances as the size of the feasible bus trip configuration list $T_b$ can be very large (in the order of billions for real-world instances). 
Generating the feasible trip configuration $T$ is a time-consuming process (requires solving a Path-TSP for each candidate configuration). Furthermore, even if the trip configurations were known, solving the student-trip assignment ILP (18) - (20) with a large number of variables ($O(|T|)$) becomes a challenging task for off-the-shelf ILP solvers.

To address this computation bottleneck of the proposed decomposition approach, we develop network compression techniques that induce sparsity in the shareability network. The techniques we presented reduce the time it takes to compute the feasible trip configurations $T$, while retaining all (or most of the) useful information that is embedded in the network (i.e. retaining good trip configurations).
We present compression techniques from two perspectives that work by either compressing the nodes or pruning the edges of the shareability network. A sparse shareability network leads to a shorter feasible trip configuration list and makes even large-scale SSRPs tractable to solve.

\subsubsection{Node compression technique}
\label{subsubsection:NC}

For the node compression technique, we reduce the number of nodes in the shareability network by generating bus stops and allowing students to walk to bus stops within a maximum walking distance. 
The school buses will pick up students at bus stops instead of students' residence, and the shareability network will be constructed based on bus stops. Due to various reasons, some students might require door to door travel without walking to a bus stop. In these settings, we can create a bus stop at students' residence.

Given a pre-defined set of candidate bus stops $M$, we first formulate the following ILP to assign students to a minimum number of bus stops.
The binary decision variable $x_m$ (for $m \in M$) denotes whether bus stop $m$ is to be selected. 





\begin{align}
{\text{min} } \quad
& \sum_{m \in M} x_m \\
\text{s.t.} \quad
& \sum_{m \in N_s} x_m \geq 1 \quad \forall s \in S\\
& x_{m} \in \{ 0,1 \} \quad \forall m \in M
\end{align}

The objective function (21) minimizes the total number of bus stops that are needed.  
Constraints (22) ensure that each student has at least one bus stop within their maximum walking distances. 
Constraints (23) impose that decision variables are binary.
This ILP generates the minimum number of bus stops needed to cover all students. 

Even if some students need door-to-door pickups ($d_s^{max} = 0$ and $N_s = \emptyset$), the above node compression technique can still be used by assigning the student to a bus stop and adding a penalty corresponding to the vehicle having to move from the stop to the student residence and back. 
This penalty is bounded by the distance of a round-trip between the assigned bus stop and door-to-door students' residence. The penalty will be incorporated in the function $\sfunction{PathTsp}(\cdot)$ when considering the feasibility of trips.
For cases with considerable number of students who need door-to-door pickups, this heuristic substantially reduces the computational complexity.
The impact of virtual walking distance for students with door-to-door pickups will be discussed in the experiments section.
Figure \ref{fig:node_compression} shows an example of the shareability network before and after applying the node compression technique with above heuristic for students with door-to-door pickups (this example uses a virtual walking distance of 0.5 miles, which is an independent parameter from the maximum walking distance).

\begin{figure*}[!h]
    \centering
    \subfloat[The original shareability network]{{\includegraphics[scale=0.235]{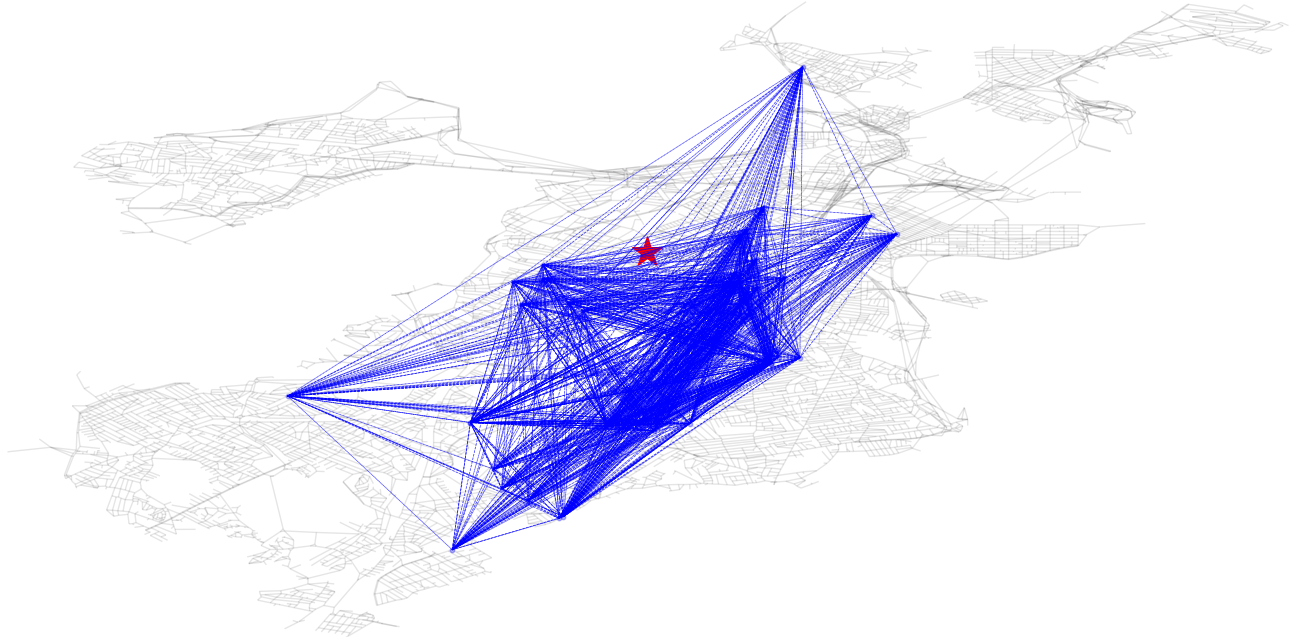}}}
    \qquad
    \subfloat[The shareability network after applying node compression technique]{{\includegraphics[scale=0.235]{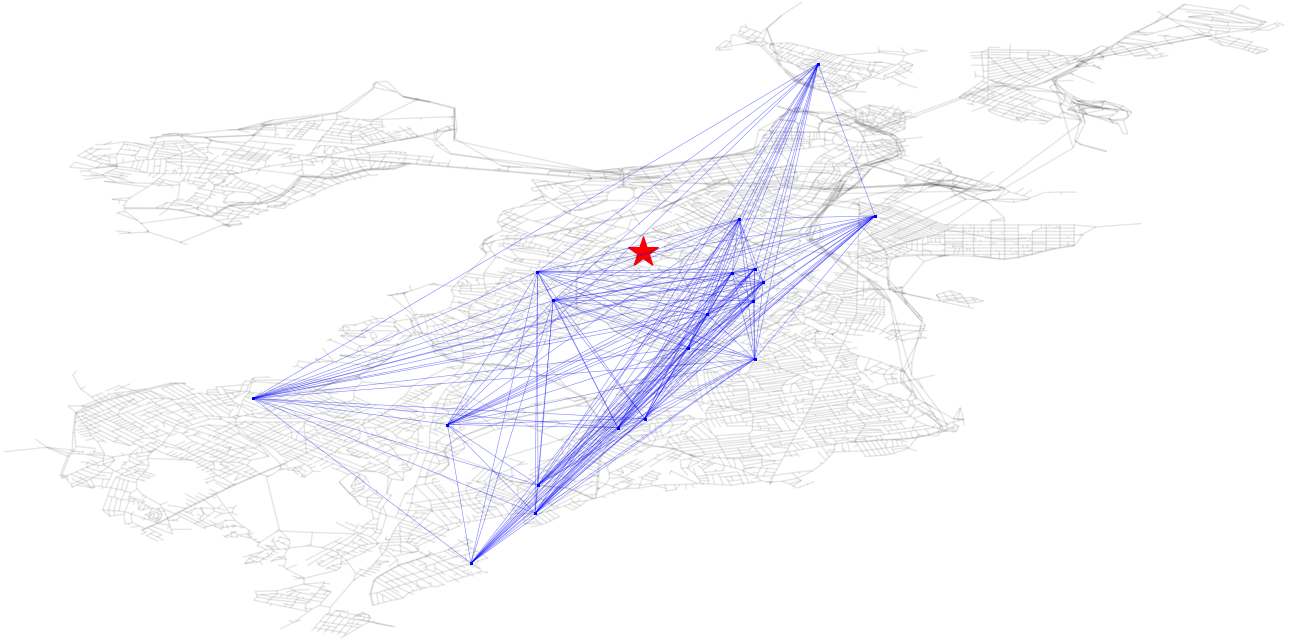} }}
    \caption{An instance of applying the node compression technique for the shareability network. These figures are generated using data corresponding to the Tommy Harper School from the synthetic BPS dataset~\cite{BPS_data}. The school has 51 students including 7 students with door-to-door pickups (considering a virtual walking distance 0.5 miles). The red star is the location for school and the blue graph is the shareability network. The number of nodes (bus stops) in the shareability network decreases from 51 to 19 after applying the node compression technique.}%
    \label{fig:node_compression}
\end{figure*}

It is worth mentioning that our node compression formulation might lead to overcrowded bus stops (with many students) in dense urban areas. 
The school bus schedules can be inefficient if a school bus that visits an overcrowded bus stop is not able to pick up students from any other stops. 
Implications of overcrowded bus stops will be discussed in detail in Section \ref{subsubsection:MH} and a post-processing approach will be proposed to address this issue.

The node compression technique reduces the maximum number of effective students for any trip $\tau_b \in T_b$, since each bus stop now aggregated multiple students as a single request with a larger capacity. 
Nonetheless, even with the corresponding reduction in the number of nodes in the shareability network, the shareability network might still not be sparse enough for the computational tractability of large-scale problem instances. 
Therefore, we also adopt a heuristic-based edge pruning technique that deletes edges which are unlikely to induce shared trips. 
This pruning technique can in theory leads to a sub-optimal solution because we will eliminate feasible sharing possibilities. Our aim is to generate a heuristic set of rules that only eliminate pairings that are very unlikely to occur in practice. 

\subsubsection{Edge pruning technique}

For the edge pruning technique, we reduce edges in the shareability network following some mechanisms.
The main idea behind pruning edges in the shareability network is to go beyond the technical feasibility of a pairing and introduce additional constraints that in some way embed the realism of a pair of nodes being shared. The travel time between two nodes is not the only factor corresponding to the likelihood of sharing trips, the relative position of the school and nodes also matters. For example, it is unlikely for a student 5 miles north of the school to share a ride with a student 5 miles south of the school, if there are enough students to fill a bus from north of the school. On the other hand, two students could share the same trip if they are on the same side of the school even if they are relatively far away from each other. 

To incorporate both factors, we define the adjusted travel time $\Bar{t}_{ij} = \delta_{ij} \cdot t_{ij}$ between any two nodes $i$ and $j$ in the shareability network, where $\delta_{ij} = \frac{t_{ij} + t_{jv_d}}{t_{iv_d}}$, represents the detour factor ($\delta_{ij} \geq 1$). 
The detour factor is a measure of how much of a detour node $i$ has to incur to share a trip with node $j$, as opposed to node $i$ being connected to the destination directly.

With the definition of the adjusted travel time, we apply a mechanism that a node only share trips with \textit{adjusted nearby nodes} in the shareability network. 
More formally, the \textit{adjusted nearby nodes} for any node $i$ are generated by calculating the adjusted travel time between node $i$ and all other nodes $V \setminus \{i\}$, sorting the nodes in ascending order by the adjusted travel time, and choosing the $k$ closest nodes such that $k \leq \beta \cdot C$, where $\beta$ is a pruning parameter. 
If nodes correspond to bus stops with multiple students as a result of the node compression, we consider the $k$ closest nodes such that the sum of students is less than $\beta \cdot C$. Note that $\sum_{j=1}^k n(m_j) \leq \beta \cdot C$, where $m_j$ represents the bus stop and $n(m_j)$ is the number of students at stop $m_j$.

The number of edges decreases as $\beta$ decreases, so the number of feasible bus trips $|T_b|$ also decreases.
We tune $\beta$ to generate an appropriate sized set of feasible bus trips $T_b$ that maintain tractability (size of $|T_b|$) while being large enough to provide diverse set of bus routes.
Intuitively, if $\beta$ is too small, many feasible trips that belong to the optimal solution might be eliminated.
On the other hand, if $\beta$ is too large and only a few edges are eliminated in the shareability network, the trip list will be too large and large-scale SSRPs will remain intractable.
Therefore, $\beta$ provides a balance between tractability and optimality. 
The appropriate value of $\beta$ (minimum necessary for good solutions) will in practice be different for different problem instances and spatial distributions of the students, and should be chosen according to the computational budget.
Figure \ref{fig:edge_compression} shows an example of the shareability network before and after applying the edge pruning technique. 

\begin{figure*}[!h]
    \centering
    \subfloat[The shareability network after applying node compression]{{\includegraphics[scale=0.235]{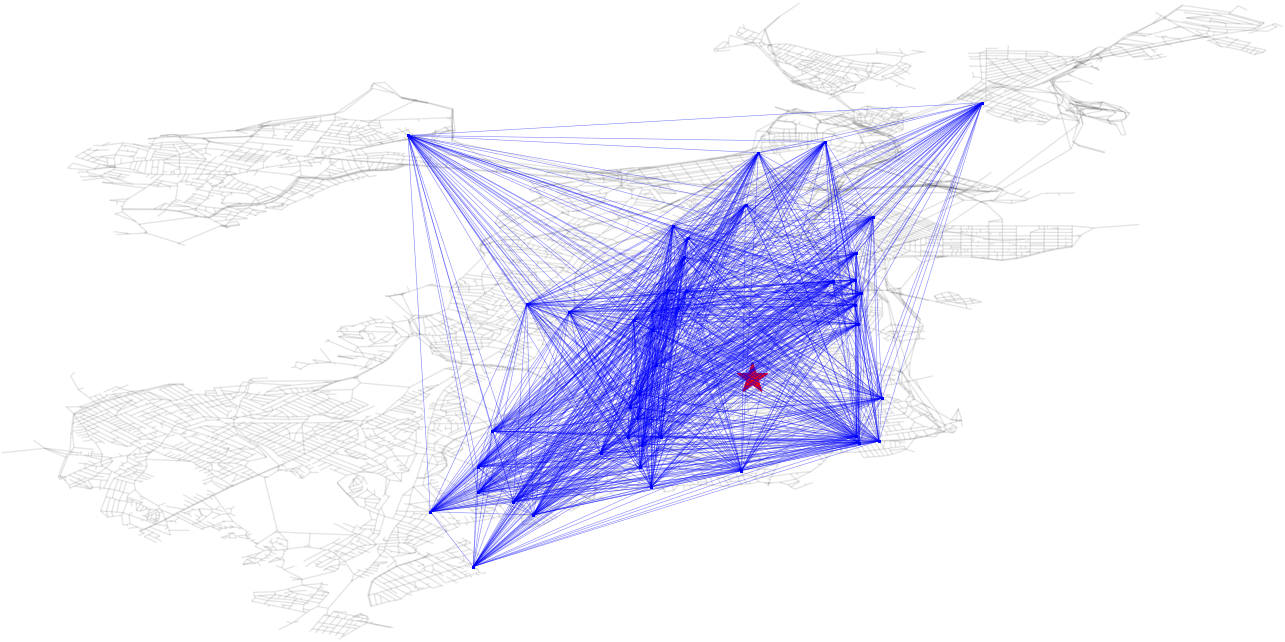} }}
    \qquad
    \subfloat[The shareability network after applying both node compression and edge pruning ($\beta = 2$)]{{\includegraphics[scale=0.235]{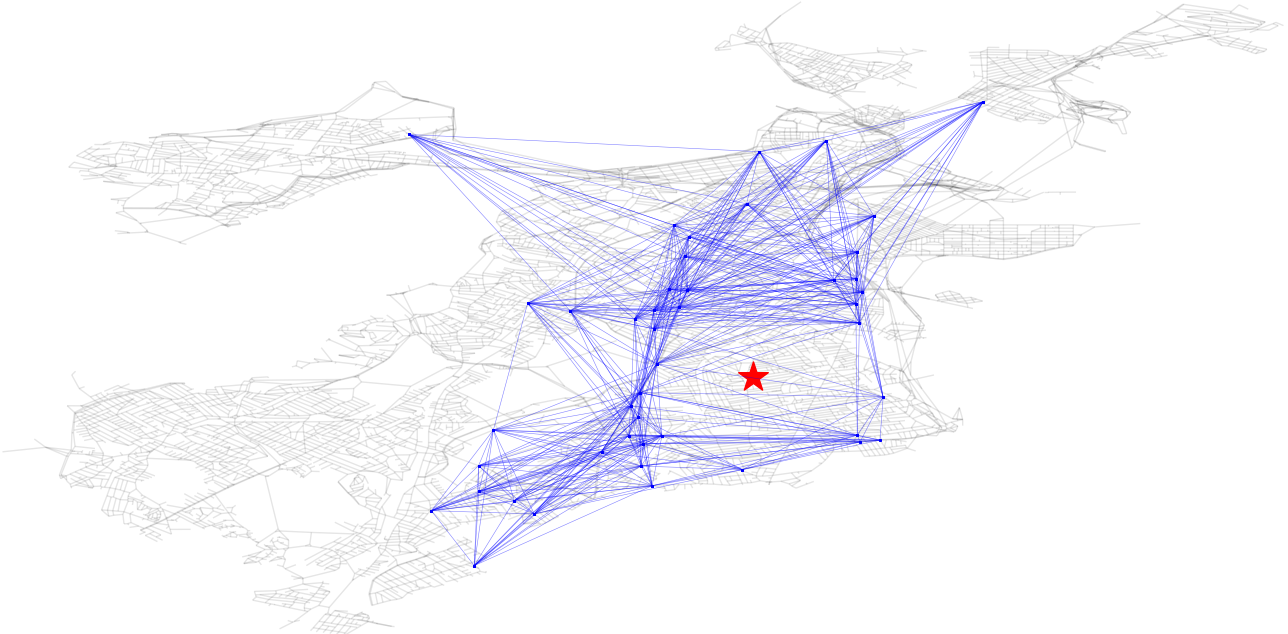} }}%
    \caption{Instance of applying edge pruning for the shareability network. These figures are generated using data from the Dennis Eckersley School in the synthetic BPS data set. The instance has 403 students (75 students with door-to-door pickups - we consider a virtual walking distance 0.5 miles). The red star denotes the school location and the blue graph shows the shareability network. The node compression generates 45 bus stops. After applying edge pruning with $\beta = 2$, the shareability network becomes very sparse with a much smaller number of cliques. }
    \label{fig:edge_compression}
\end{figure*}

Once again, we note that the edge pruning technique is a heuristic approach and does not provide any guarantees regarding the loss of optimality. 
In particular, when the size of SSRP instances becomes larger, in order to make problems tractable we have to use a smaller value of $\beta$ and restrict the size of feasible bus trip configuration list $|T_b|$.
By the criterion of finding a feasible trip in Lemma \ref{lemma}, all students in a feasible trip form a clique in the shareability network. 
Experimental results show that the edge compression can lead to unsatisfactory results when $\beta$ is too small. 
In order to compensate for this side-effect, we propose a relaxation of the necessary condition from Lemma \ref{lemma}. We introduce the $\gamma$-quasi-clique process based on the Algorithm \ref{alg1} to find groups of students who form \textit{quasi-cliques} in the shareability network.
More precisely, the $\gamma$-quasi-clique is a sub-graph similar to a clique defined by a heuristic parameter $\gamma$, which indicates the connection between quasi-cliques and cliques.
A $\gamma$-quasi-clique of size $k+1$ is a sub-graph built from adding a new node $i$ to a $\gamma$-quasi-clique of size $k$ where node $i$ connects with at least a $\gamma$ proportion of nodes in the size-$k$ $\gamma$-quasi-clique.
Note that $\gamma$ is upper bounded by 1 by definition. 

The $\gamma$-quasi-clique process replaces the function $\sfunction{CliqueCheck}(\tau,G(V,E))$ in the Algorithm \ref{alg1} and details are shown in the Algorithm \ref{alg2}.
Figure \ref{fig:quasi_clique_explain} shows a simple instance for explaining edge pruning techniques. Figure (\ref{fig:quasi_clique_explain}A) and (\ref{fig:quasi_clique_explain}B) display optimal routes before and after applying $\beta$ edge pruning method with $\beta = 0.5$. The edge compression technique leads to sub-optimal solution which requires an additional bus. Figure (\ref{fig:quasi_clique_explain}C) indicates the optimal routes after applying both $\beta$ edge pruning method and $\gamma$-quasi-clique process with $\beta = 0.5$ and $\gamma = 0.4$, i.e., the threshold for passing the $\gamma$-quasi-clique process is 40\% in this instance. Starting from a feasible trip $\tau' = (S_A, S_B)$, trip $\tau'' = (S_A, S_B, S_C)$ passes the quasi-clique check process since student $S_C$ connects with 50\% of students in trip $\tau'$. Similarly, trip $\tau = (S_A, S_B, S_C, S_D)$ passes the quasi-clique check process given student $S_D$ connects with 67\% of students in trip $\tau''$. 

\begin{algorithm}[h]
\caption{$\gamma$-quasi-clique process. Input: the shareability network $G$, a feasible bus trip $\tau \in T_b$, a student $s \notin \tau$, heuristic parameter $\gamma$.}
\label{alg2}
\begin{algorithmic}[1]
\Function{$\gamma$-QuasiCliqueProcess}{$\tau,s,G(V,E),\gamma$}
    \State $x \gets 0$ 
    \For{$s' \in S(\tau)$}
        \If {$e(s,s') \notin E$}
            \State $x \gets x + 1$
        \EndIf
    \EndFor
    \If {$x > \gamma \cdot |\tau|$}  
        \State \textbf{return} false
    \EndIf
    \State \textbf{return} true
\EndFunction
\end{algorithmic}
\end{algorithm}

\begin{figure}[!h]
    \centering
    \includegraphics[scale=0.55]{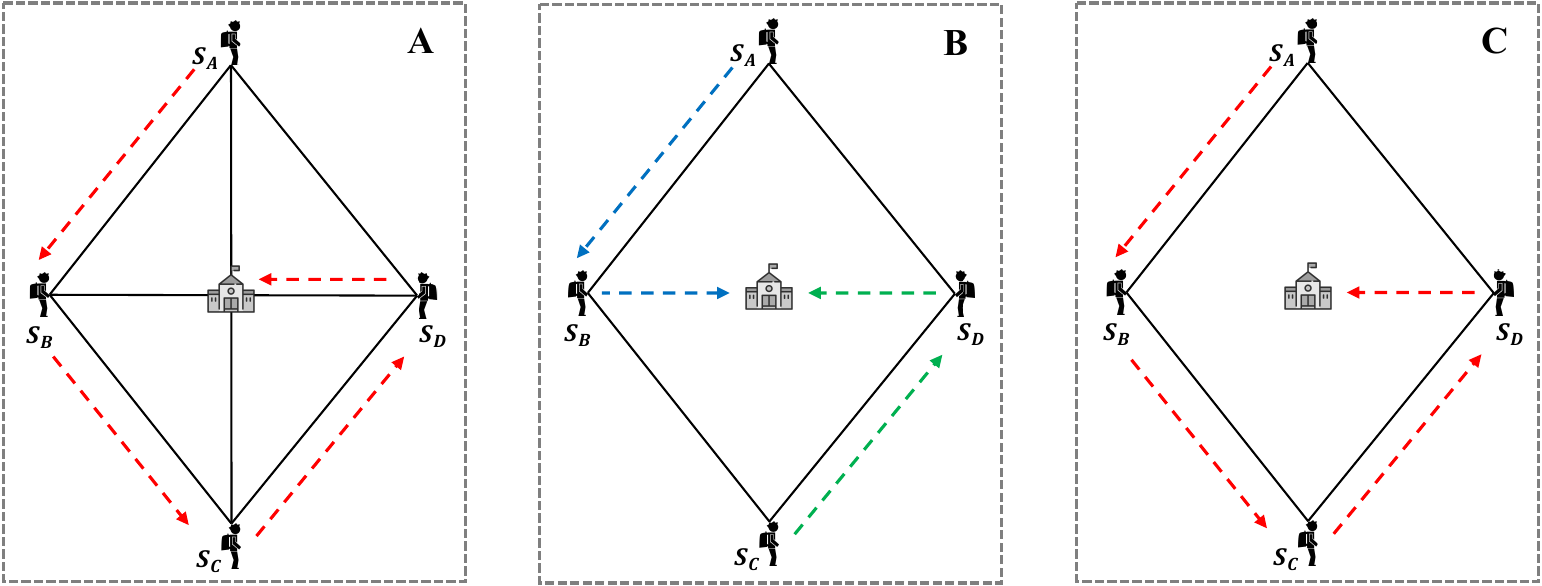}
    \caption{Instance for applying edge pruning techniques. Black solid lines represent the shareability network and colored dashed lines represent optimal routes. There are four students need to be picked up with school buses of capacity $C = 4$. Each sub-figure shows both the shareability network and the optimal bus routes. (A): scenario without applying any edge pruning techniques; there is a fully-connected shareability network and the optimal routes is to have one bus pick up all four students. (B): applying edge pruning technique with $\beta = 0.5$; each student is connected to two nearest students in the shareability network and the optimal routes is having two school buses serving four students. (C): applying edge pruning techniques with $\beta = 0.5$ and $\gamma = 0.4$; it has the identical shareability network as the scenario B but produces the same optimal routes as the scenario A; the quasi-clique process considers the trip $\tau = (S_A, S_B, S_C, S_D)$ feasible although students in trip $\tau$ do not form a clique in the shareability network.}
    \label{fig:quasi_clique_explain}
\end{figure}

Introducing the $\gamma$-quasi-clique process in the Algorithm 1 improves the solution by augmenting the bus trip configuration list $T_b$ given a fixed $\beta$ and produces better results in practice.
The $\beta$ edge pruning approach dramatically shrinks the trip configuration list, which is important for tractability but has the potential to remove good trips. The $\gamma$-quasi-clique process can be thought of as a sampling technique for adding some of these deleted trips that are deemed to be good---by considering trips that are almost cliques (or quasi-cliques) in the compressed shareability network.

\section{Experiments}
\label{sec:experiments}

To test the applicability of our proposed algorithms to large-scale SSRPs, we conducted a number of numerical experiments using some publicly accessible benchmark problem instances. All the experiments were run on a 3.0 GHz AMD Threadripper 2970WX with 128 GB Memory using Python 3.7. 

\subsection{BPS synthetic school bus data}
\label{subsec:BPS}

The first data set we used is from the Transportation Challenge held by the BPS~\citep{BPS_data}, which contains 22420 simulated students addresses (to protect student privacy) from 134 public schools. The data set was created with the same aggregate pickup location distributions as in the real-world. 
The spatial distribution of students and schools from this dataset is shown in the Figure \ref{fig:data}.
    
\begin{figure*}[h]
\centering
\includegraphics[scale=0.4]{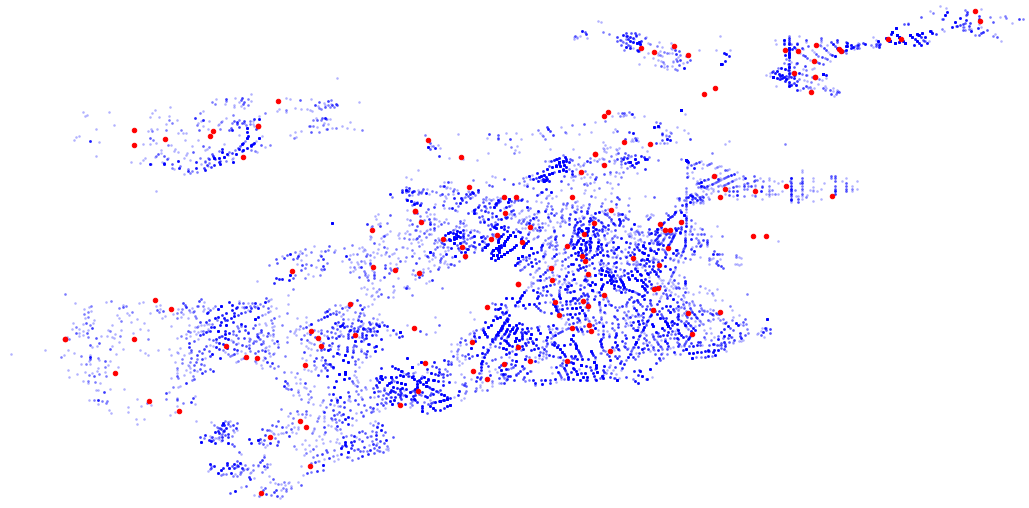}
\caption{Simulated data from BPS. Blue dots represent student locations and red circles denote school positions.}
\label{fig:data}
\end{figure*}

The Boston road network $G_r=(V_r ,E_r)$ is obtained from Open Street Maps (OSM)~\cite{OpenStreetMap} using the open source Python library OSMnx \cite{BOEING2017126}. 
All the visualizations of the results are generated using the Python library NetworkX \cite{SciPyProceedings_11} and Matplotlib. 
The implementation used for the $\sfunction{PathTsp}$ function is shown in \ref{apend:tsp}.
We used off-the-shelf ILP solver Gurobi 9.0.3~\cite{gurobi} for solving the assignment problem in the experiments (with a 21600-second maximum computation time).
We adopt the following assumptions and parameter choices, primarily based on the rules specified for the BPS challenge, which were determined based on school district requirements and practical considerations. This allows us to accurately compare our results against other solutions techniques for these benchmark instances. The parameters corresponding to alternate modes are our choices.
\begin{itemize}
\item School buses can start at any location in the network.
\item The delay time $t^{delay}_m$ (in seconds) for buses at each bus stop $m$ follows a function $t^{delay}_m = 15 + 5 \cdot n(m)$ , where $n(m)$ is the number of students at the bus stop $m$.
\item Each door-to-door student has the same virtual maximum walking distance $d_s^{max} = 0.5$ miles.
\item There is no restriction on the bus fleet size and the school bus capacity $C = 72$.
\item The maximum ride time for students is $t^{max} = 3600$ seconds.
\item The set of potential bus stops $M$ is same to the set of nodes in the road network $V_r$. 
\item The set of alternate modes $A$ only contains dedicated single occupancy vehicles (e.g. single student by taxi/ride-hail), i.e., $A = \{direct\}$.
\item The fixed cost of each bus (including the driver wage) is $\alpha_{C_1}^b = 200$ dollars per day. The cost for operating a bus is $\alpha_{C_2}^b = 1$ dollar per mile~\cite{Bus}. This implies that the capital cost for ownership is much larger than the operational cost, which leads to solutions that minimize the number of buses needed. The cost for taking a dedicated vehicle is $\alpha_C^{direct} = 2$ dollars per mile~\cite{Uber}.
\end{itemize}


\begin{table}[h!]
\centering
\caption{Computational results for the ten representative schools from the BPS data set without allowing alternate modes.  The total students and door-to-door students are denoted by $N_S$ and $N_{S_{d2d}}$,  the number of bus stops is given by $N_{M^*}$, and $N_{T_b}$ designates the length of the feasible trip configuration list. The objective value for the shareability network based decomposition approach without alternate modes is given by SND and $N_B$ stands for the number of buses needed. 
The overall computation time (in seconds) is indicated by $T$.
$GAP$ denotes the optimality gap of the solutions, and $OPT$ in this column represents that the solution is optimal given the current heuristic settings $\beta$ and $\gamma$.
The dash ($-$) in the $\beta$ and $\gamma$ columns indicates that the edge pruning heuristic was not used.}
\label{tab:computation_base}
\begin{tabular}{m{8em} m{2em} m{2em} m{2em} m{4em} m{1em} m{1em} m{3em} m{1em} m{2em} m{2em}}
 \hline
 School & $N_S$ & $N_{S_{d2d}}$ & $N_{M^*}$ & $N_{T_b}$ & $\beta$ & $\gamma$ & SND & $N_B$ & GAP & $T$ \\
 \hline
 Tommy Harper & 51 & 7  & 19 & 62632 & -  & - & 649.16 & 3  & OPT & 58\\
 Craig Kimbrel  & 71 & 11  & 20 & 107253 & - & - & 648.97 & 3 & OPT & 567\\
 Deven Marrero & 91 & 14 & 22 & 100021 & - & - & 653.94 & 3  & OPT & 40\\
 Frank Malzone & 160 & 30 & 30 & 1281510 & 1.5 & 0.4 & 879.40 & 4 & 5.38\% & 21743\\
 Dick  Williams & 183 & 28 & 22 & 71672 & - & - & 659.98  & 3 & OPT & 10\\
 Dick Bresciani  & 208 & 40 & 35 & 1396354 & 1.5& 0.4& 882.99 & 4 & OPT & 270\\
 Dutch Leonard  & 243 & 42& 42& 2428467 & 2& 0.4 & 1113.83 & 5 & OPT & 535\\
 Christian Vazquez & 344 & 66& 35 & 1212711 & 3.5 & 0.4 & 1109.87 & 5 & OPT & 230\\
 Dennis  Eckersley & 403 & 75 &45 & 1138279 &2.5 & 0.4 & 1328.66 & 6 & OPT & 2429\\
 Rick  Ferrell  & 573 & 109&55 & 546319 &2.5 & 0.4 & 1971.91 & 9 & 8.59\% & 21657\\
 \hline
\end{tabular}
\end{table}

\begin{table}[h!]
\centering
\caption{Computational results for the ten representative schools from the BPS data set with alternate modes. 
The objective value for the shareability network based decomposition approach with alternate modes is given by SND-A.
The number of students who are assigned to dedicated vehicles is given by $N_U$.
$Improv$ implies the improvement of objective value for SND-A compared to SND. 
$\Delta$ indicates the difference of the number of buses needed between SND-A and SND.
}
\label{tab:computation_alternate}
\begin{tabular}{m{8em} m{1em} m{1em} m{4em} m{4em} m{1em} m{1em} m{2em} m{2em} m{1em} m{3em}}
 \hline
 School  & $\beta$ & $\gamma$ & SND & SND-A &  $N_B$ & $N_U$ & GAP & $T$ & $\Delta$ & Improv \\
 \hline
 Tommy Harper & - & - & 649.16 & 342.00 & 1 & 24 & OPT & 7 & -2 & 47.32\% \\
 Craig Kimbrel & - & - & 648.97 & 461.39 & 2 & 2 & OPT & 14 & -1 & 28.90\% \\
 Deven Marrero & - & - & 653.94 & 527.03 & 2 & 13 & OPT & 11 & -1 & 19.41\% \\
 Frank Malzone & 1.5 & 0.4 & 879.40 & 725.66 & 3 & 5 & OPT & 157 & -1 & 17.48\% \\
 Dick Williams & - & - & 659.98 & 622.07 & 2 & 42 & OPT & 10 & -1 & 5.74\% \\
 Dick Bresciani & 1.5 & 0.4 & 882.99 & 790.47 & 3 & 23 & OPT & 162 & -1 & 10.48\% \\
 Dutch Leonard & 2 & 0.4 & 1113.83 & 937.06 & 3 & 34 & OPT & 283 & -2 & 15.87\% \\
 Christian Vazquez & 3.5 & 0.4 & 1109.87 & 1109.87 & 5 & 0 & OPT & 470 & 0 & 0.00\% \\
 Dennis Eckersley & 2.5 & 0.4 & 1328.66 & 1322.44 & 5 & 45 & 2.97\% & 21706 & -1 & 0.47\% \\
 Rick Ferrell & 2.5 & 0.4 & 1971.91 & 1781.92 & 8 & 2 & OPT & 73 & -1 & 9.63\% \\
 \hline
\end{tabular}
\end{table}

Numerical experiments were conducted for ten representative schools from the BPS dataset that span the range of school sizes and complexity of the student spatial distributions. 
The relevant metrics from results without alternate modes are shown in Table \ref{tab:computation_base} and optimal school bus schedules can be found in \ref{apen:results_base}.
Table \ref{tab:computation_alternate} displays results considering alternate modes for students and optimal school bus schedules can be found in \ref{apen:results_alternate}.

Compared to the scenario without alternate modes, introducing alternate modes to students can significantly reduce the cost of school bus schedules, 15.5\% on average over 10 instances.
Also, the number of buses needed decreases for most school instances by transporting a group of students by dedicated vehicles.
Allowing alternate modes is particularly efficient for Craig Kimbrel, Frank Malzone and Rick Frrell, where a school bus can be replaced by transporting a few students to schools with alternate modes.
It is worth mentioning that we assume each student is willing to take alternate modes in these experiments, and the willingness of students can be incorporated by adding additional constraints into the model. The school bus schedules generated considering alternate modes are at least as good as the schedules produced without alternate modes.

\subsection{Benchmark testing}
To further evaluate the performance and scalability of this approach, 
we compared our approach with two state-of-the-art methods for solving SSRPs~\cite{SCHITTEKAT2013518,MIT_SBRP}.

\subsubsection{BiRD (Bi-objective Routing Decomposition) algorithm}
First we compared against the single-school route generation component of the BiRD (Bi-objective Routing Decomposition) algorithm~\cite{MIT_SBRP} using the BPS synthetic data. 
In the comparison, we set $\alpha_{C_1}^b = 200, \alpha_{C_2}^b = 1$ and $\alpha_C^a = \infty, \forall a \in A$ (to consider the objective of minimizing the number of school buses needed and eliminate alternative models).
In the BiRD algorithm, the quality of the solution is a function of the number of feasible trips $N$ covering each bus stop. In the application described in~\citet{MIT_SBRP}, $N$ was set to be 400. 
Because the BiRD algorithm includes a randomized trip generation step and we are not able to exactly replicate the trips that were generated, we conservatively set $N$ equal to 1000 (instead of the 400 used in their experiments) and reported the best result out of 10 different BiRD runs (to further reduce the probability of getting inferior results due to randomization).
A comparison of the results is shown in the Table \ref{tab:MIT_comparison}. The corresponding optimal school bus schedules can be found in \ref{apen:results_MIT}.

\begin{table}[h!]
\centering
\caption{Comparison results with the single-school route generation component of the $BiRD$ algorithm~\cite{MIT_SBRP}. $BiRD$ stands for the objective value for the Bi-objective Routing Decomposition algorithm, $Improv$ implies the improvement of objective value for $SND$ compared to $BiRD$. $\Delta$ indicates the difference of the number of buses needed between $SND$ and $BiRD$.}
\label{tab:MIT_comparison}
\begin{tabular}{|m{8em} | m{1em} m{1.5em} m{1.5em} m{4em} m{1em} m{1em}| m{3em}| m{3em}| m{1em} | m{3em}|}
 \hline
 School & $N_S$ & $N_{S_{d2d}}$ & $N_{M^*}$ & $N_{T_b}$ & $\beta$ & $\gamma$ & SND & BiRD & $\Delta$ & Improv  \\
 \hline
 Tommy Harper & 51 & 7  & 19 & 62632 & - & - & 649.16& 653.44&0 & 0.66$\%$\\
 \hline
 Craig Kimbrel  & 71 & 11  & 20 & 107253 & - & - & 648.97 & 654.28 &0 & 0.81$\%$\\
 \hline
 Deven Marrero & 91 & 14 & 22 & 100021 & - & - &653.94& 658.34& 0 & 0.67$\%$\\
 \hline
 Frank Malzone & 160 & 30 & 30 & 1281510 & 1.5 &0.4&879.40&881.90& 0 &0.28$\%$\\
 \hline
 Dick  Williams & 183 & 28&22 &71672 &- &- &659.98&660.32&0 &0.05$\%$\\
 \hline
 Dick Bresciani  & 208 & 40 &35 &1396354 &1.5 &0.4 &882.99&883.91&0 & 0.10$\%$\\
 \hline
 Dutch Leonard  & 243 & 42& 42& 2428467&2 &0.4 &1113.83&1320.28& -1 &15.64$\%$\\
 \hline
 Christian Vazquez & 344 &66 &35 & 1212711&3.5 &0.4 &1109.87&1313.03&-1 & 15.47$\%$\\
 \hline
 Dennis  Eckersley & 403 & 75&45 &1138279 &2.5 &0.4 &1328.66&1532.52&-1 & 13.30$\%$\\
 \hline
 Rick  Ferrell  & 573 & 109 &55 &546319 &2.5 &0.4&1971.91& 1973.46&0 & 0.08$\%$\\
 \hline
\end{tabular}
\end{table}

\subsubsection{Metaheuristic algorithm with neighbourhood search}
\label{subsubsection:MH}

An alternative approach in~\citet{SCHITTEKAT2013518} used a metaheuristic algorithm (greedy search procedure with neighborhood search) to simultaneously solve both the bus stop selection and bus route generation problem for a single school. In contrast, we solve these two problems sequentially. To compare our method with this approach, we used the Euclidean space instances given in~\cite{SCHITTEKAT2013518} (instead of network instances with shortest path distances) and adopted our algorithm to the settings used in their experiments. In particular, by first assigning students to bus stops and then solving for optimal bus routes.
The comparison results are shown in Table \ref{tab:meta_comparison1}.
For synthetic instances generated by \citet{SCHITTEKAT2013518}, we limited our comparisons to instances with a maximum walking distance of 5 units (which was the minimum considered), since larger distances led to fewer pickup points and simpler routing problems. Such instances could be solved without novel techniques that we proposed.

\begin{table}[h!]
\centering
\caption{Comparison results with metaheuristic method~\cite{SCHITTEKAT2013518}. $ID$ corresponds to the instance number, $stop$ denotes the number of bus stops, $stud$ represents the number of students, $cap$ indicates the bus capacity, $wd$ is the maximum walking distance for students in Euclidean space, $\beta$ and $\gamma$ are parameters for the edge pruning techniques, $N_{T_b}$ stands for the number of feasible bus trips, $MH$ is the objective value for metaheuristic method~\cite{SCHITTEKAT2013518}, $SND$ indicates the objective value for the shareability network based decomposition approach, $Improv$ implies the improvement of objective value for $SND$ compared to $MH$.}
\label{tab:meta_comparison1}
\begin{tabular}{m{2em}  m{2em} m{2em} m{2em} m{2em} m{2em} m{2em} m{3em} m{4em} m{4em} m{3em}}
 \hline
 ID & stop & stud & cap & wd &  $\beta$ & $\gamma$ & $N_{T_b}$ & MH & SND & Improv  \\
 \hline
 73 & 40 & 200 & 25 & 5 & 2&0.3 &1106 &831.94 &826.45 &$+0.66\%$ \\
 74 & 40 & 200 & 50 & 5 & 1.5& 0.3&127717 & 593.35&588.23 &$+0.86\%$ \\
 81 & 40& 400& 25& 5& -& -& 3414& 1407.05&1386.6 & $+1.45\%$ \\
 82 & 40& 400& 50& 5& 3&0.3 &6713 & 858.80& 853.45 &$+0.62\%$ \\
 89 & 40& 800& 25& 5& - &- &40 & 2900.14& 3085.11 &$-6.38\%$\\
 90 & 40& 800& 50& 5&- &- &10775 & 1345.70& 1402.23 &$-4.20\%$\\
 97 & 80& 400& 25& 5& 3& 0.3&5184 & 1546.23& 1499.6 &$+3.02\%$\\
 98 & 80& 400& 50& 5& 2& 0.3& 182115& 1048.56& 1016.07 & $+3.10\%$\\
 105 & 80& 800& 25& 5& -& -& 13863& 2527.96& 2665.81 &$-5.45\%$\\
 106 & 80& 800& 50& 5& 3& 0.3 & 6473 &1530.58 & 1513& $+1.15\%$\\
 \hline
\end{tabular}
\end{table}

\begin{table}[h!]
\centering
\caption{Comparison results with metaheuristic method~\cite{SCHITTEKAT2013518} with bus stop separations. $N^{max}$ represents the maximum number of students for each bus stop.}
\label{tab:meta_comparison2}
\begin{tabular}{m{2em}  m{2em} m{2em} m{2em} m{1.5em} m{1.5em} m{1.5em} m{1.5em} m{3em} m{4em} m{4em} m{3em}}
 \hline
 ID & stop & stud & cap & wd & $N^{max}$& $\beta$ & $\gamma$ & $N_{T_b}$ & MH & SND & Improv  \\
 \hline
 73 & 40 & 200 & 25 & 5 & 5 & 2&0.3 &52077 &831.94 & 816.78 &$+1.82\%$ \\
 74 & 40 & 200 & 50 & 5 &10& 1.5& 0.3&127717 & 593.35& 588.23 &$+0.86\%$ \\
 81 & 40& 400& 25& 5& 5 &2.5& 0.3& 64003& 1407.05& 1311.46 &$+6.79\%$ \\
 82 & 40& 400& 50& 5& 10&3&0.3 &122808 & 858.80& 830.19 &$+3.33\%$ \\
 89 & 40& 800& 25& 5& 5 &2.5 &0.3 &54997 & 2900.14&  2791.97&$+3.73\%$\\
 90 & 40& 800& 50& 5& 10 &3 &0.3 &214840 & 1345.70& 1331.41&$+1.06\%$\\
 97 & 80& 400& 25& 5& 5&3& 0.3& 582302 & 1546.23& 1465.22 &$+5.24\%$\\
 98 & 80& 400& 50& 5& 10 &2& 0.3& 414522 & 1048.56& 1011.49 & $+3.54\%$\\
 105 & 80& 800& 25& 5& 5 & 2.5& 0.3& 127019 & 2527.96& 2520.14 &$+0.31\%$\\
 106 & 80& 800& 50& 5& 10 & 3& 0.3& 1141177 &1530.58 & 1487.76 & $+2.80\%$\\
 \hline
\end{tabular}
\end{table}

However, our approach yields worse solutions for instance 89, 90 and 105 because of the limitations induced by sub-optimal allocations of students to bus stops. Due to our bus stop selection process (Section \ref{subsubsection:NC}) minimizing the number of bus stops, in these instances, large numbers of students were assigned to a small number of bus stops, thereby reducing the size of trip configuration list and leading to less flexibility in terms of possible routes (compared to other instances). To address this issue, we split each bus stop into several virtual bus stops by imposing a maximum number of students $N^{max}$ that can be assigned to a given bus stop. We picked the value of $N^{max}$ such that a certain minimum number of bus stops is created, where the desired number of stops is also a function of bus capacity. Clearly, having more bus stops allows for more route flexibility, but comes at the cost of additional computational complexity. Therefore, a balance between the competing goals of solution quality and computational cost is needed.  In these experiments, we set $N^{max} = 10$ for instances with vehicle capacity equals to $50$ and $N^{max} = 5$ for instances with vehicle capacity equals to $25$.

Table \ref{tab:meta_comparison2} shows the results after we apply the post-processing approach for separating each bus stop into several stops with a students cap $N^{max}$.
By splitting the overcrowded bus stops into several virtual stops, our shareability network-based decomposition approach gets better solutions than the MH approach in all problem instances. It also improves our solution without bus stop splitting in all but instance 74.



\subsection{Sensitivity analyses}

Finally, we conduct sensitivity analyses for i) parameters of edge pruning techniques, ii) cost parameter for alternate modes, and iii) virtual walking distance for students with door-to-door pickups in the node compression step.
The first two analyses will be conducted using the large-scale BPS instance of the Christian Vazquez School with 344 students, where 66 students need door-to-door pickup. The results are also supported by another BPS instance (the Dick Williams School with 183 students) shown in \ref{apend:addition_sensitivity}.
The virtual walking distance is discussed on the Craig Kimbrel School with 71 students including 11 students need door-to-door pickup. We pick a small instance so that the problem is feasible without applying edge pruning techniques.

\subsubsection{Edge pruning techniques}
\label{subsubsec:edge_compression}

\begin{figure}[h!]%
 \centering
 \subfloat[Number of buses]{\includegraphics[width=.49\linewidth]{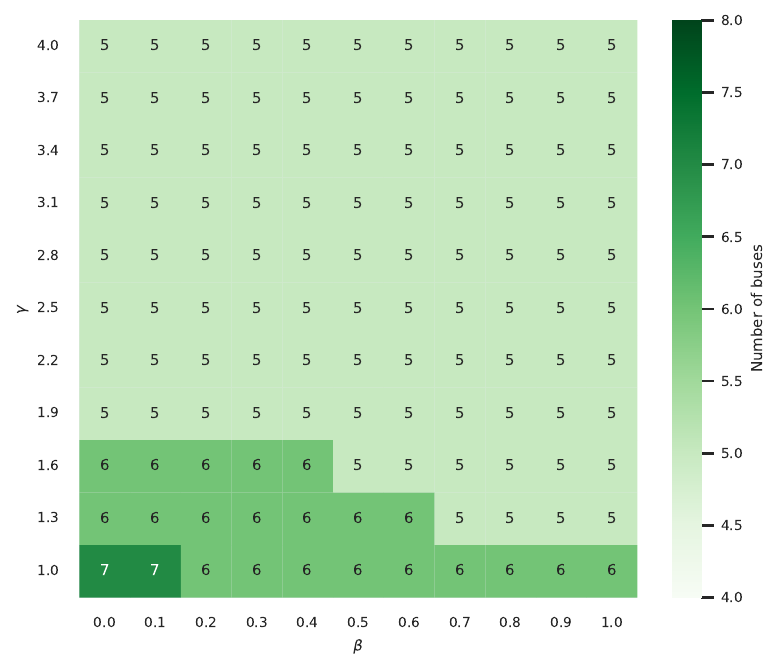}\label{fig:beta_gamma_sensitivity_a}}%
 \subfloat[Objective value]{\includegraphics[width=.49\linewidth]{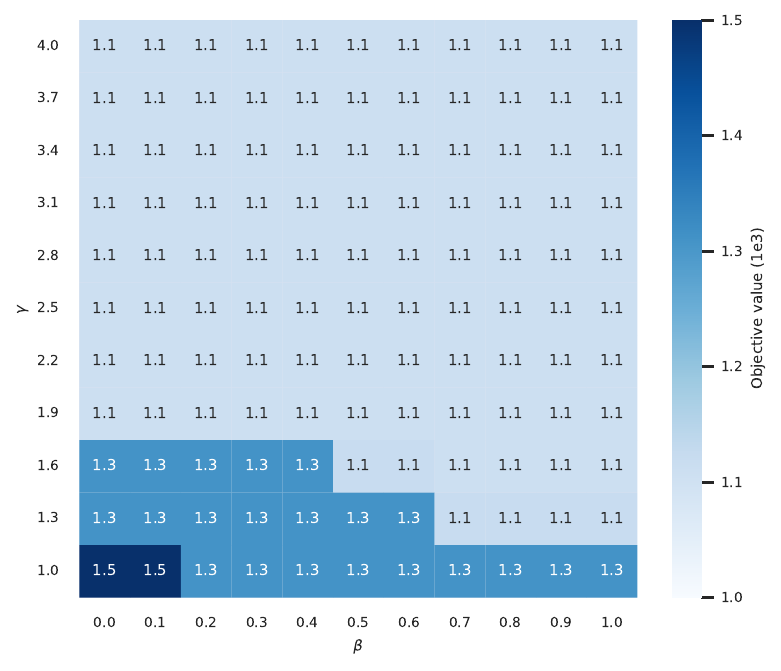}\label{fig:beta_gamma_sensitivity_b}}\\
 \subfloat[Size of trip configuration list]{\includegraphics[width=.49\linewidth]{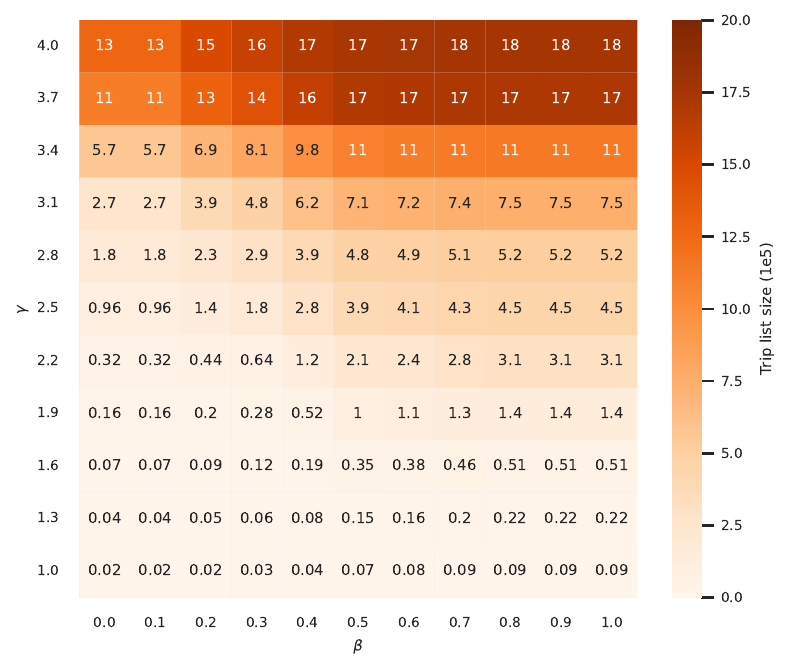}\label{fig:beta_gamma_sensitivity_c}}%
 \subfloat[Computation time]{\includegraphics[width=.49\linewidth]{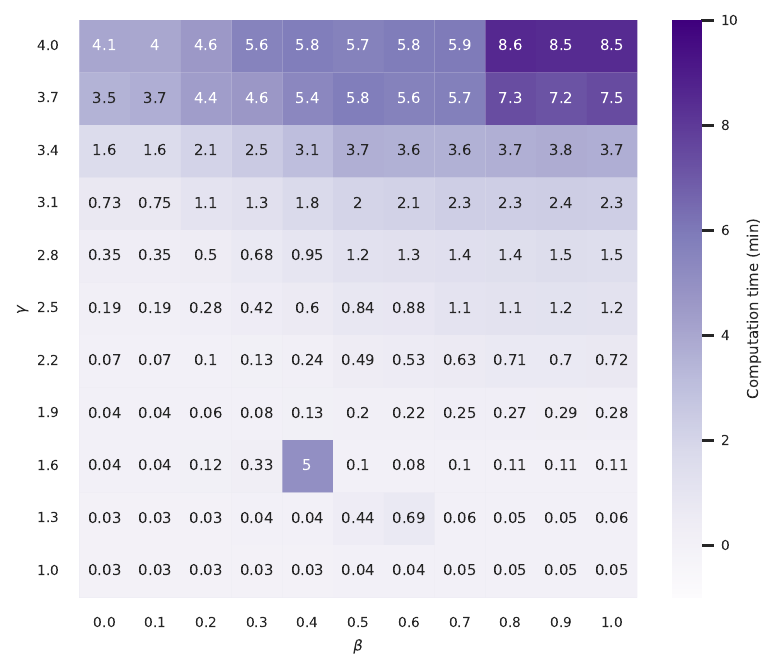}\label{fig:beta_gamma_sensitivity_d}}%
 \caption{Sensitivity analysis for parameters $\beta$ and $\gamma$ in edge pruning techniques.}%
 \label{fig:beta_gamma_sensitivity}%
\end{figure}

To measure the sensitivity of control parameters $\beta$ and $\gamma$ with respect the edge pruning technique, we choose $\beta$ values ranging from 1 to 4 with a step size of 0.3 and $\gamma$ values ranging from 0 to 1.0 with a step size of 0.1.
The sensitivity analyses results are shown in Figure \ref{fig:beta_gamma_sensitivity}.
When $\beta$ and $\gamma$ increase, a larger set of sharing options is made available via the shareability network and school bus schedules can be potentially fit into a smaller number of buses, which leads to better (lower) objective values.
In general, the computation time increases when the size of trip configuration list increases, which is induced by the increase of $\beta$ and $\gamma$.  

The computation time fluctuates in an unexpected manner when $\beta = 1.6$ and $\gamma = 0.4$.  We conjecture that this specific problem instance leads to a particularly complex solution space that is "structurally hard" and challenging for the solver to search through. We note that even small "bad" perturbations of the input to a high-dimensional non-convex problem can lead to disproportionate increases in "hardness" for a solver.

In general, as most large-scale ILP instances, the computation time that the solver takes depends on many factors, including the problem size (e.g., length of trip configuration lists), problem structure (e.g., spatial distribution of pickup locations), and specifics of the solver (e.g., cutting plane methods and local search heuristics). Therefore, even for different schools with the same size of trip configuration list, the computation time can have sizable differences due to the hardness of the instance and for example the integrality gap of the LP relaxation. For some "structurally easy" instances with tighter integrality gaps, the solver is able to find good solutions with a tight lower bound within a very short time, due to the reduced need for branching, and even reach optimality. For other "structurally harder" instances, the solver (Gurobi in our case) has to work much harder (via branching etc.) to reach good solutions. 
An in-depth analysis of the different structural properties of different instances is a very interesting research problem and something we would like to explore, but is beyond the scope of this paper. 

For the Christian Vazquez School instance without alternate modes, the size of trip configuration list without applying heuristics is 1,818,065 and the optimal objective value is 1109.87.
The minimum length of trip configuration list achieving the optimal school bus schedule is 43622 when $\beta = 2.2$ and $\gamma = 0.2$.
With a proper choice of $\beta$ and $\gamma$, the optimal school bus schedule can be found by exploring only 2.4\% feasible trips.
To produce the same school bus schedule without utilizing $\gamma$-quasi-clique process, it requires $\beta = 2.5$ which generates a trip configuration list with 95658 trips.
A problem-specific combination of $\beta$ and $\gamma$ should be used to solve SSRPs more efficiently.

\subsubsection{Alternate modes}

To measure the sensitivity of the alternate modes cost parameter $\alpha_C^{direct}$, we considered a range from 0 (replaced by 0.1 since zero cost is unrealistic) to 5 with a step size of 0.5. 
The sensitivity analyses results are shown in Figure \ref{fig:alternate_sensitivity}. 
Unsurprisingly, we notice that the number of buses needed will increase when $\alpha_C^{direct}$ increases, leading to a higher cost of using alternate modes.
The number of students who take alternate modes decreases when $\alpha_C^{direct}$ increases.
The objective value also increases when $\alpha_C^{direct}$ increases, and it converges to the scenario where alternate modes are not available for students.

\begin{figure}[h!]%
 \centering
 \subfloat[Number of buses]{\includegraphics[scale=.33]{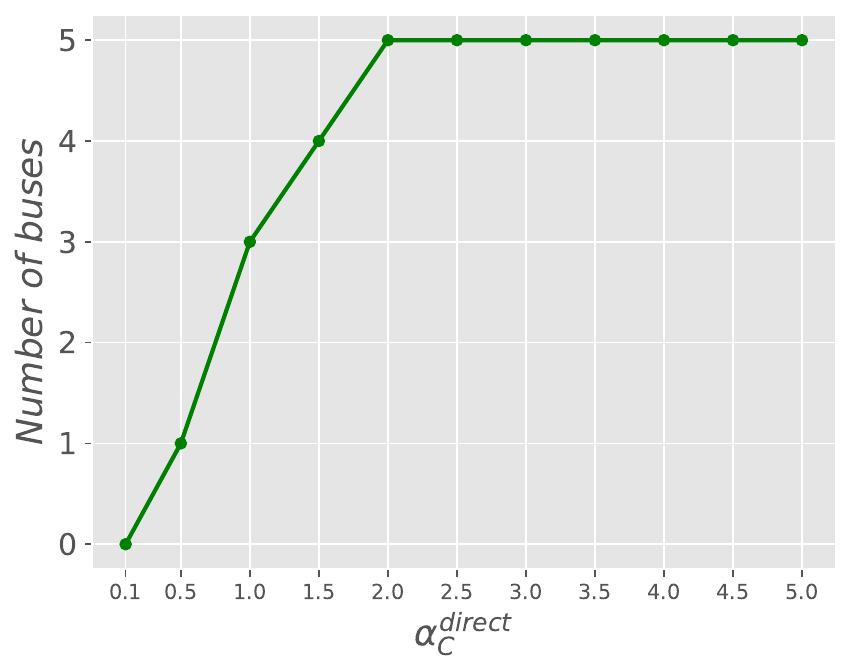}\label{fig:alternate_sensitivity_a}}%
 \subfloat[Objective value]{\includegraphics[scale=.33]{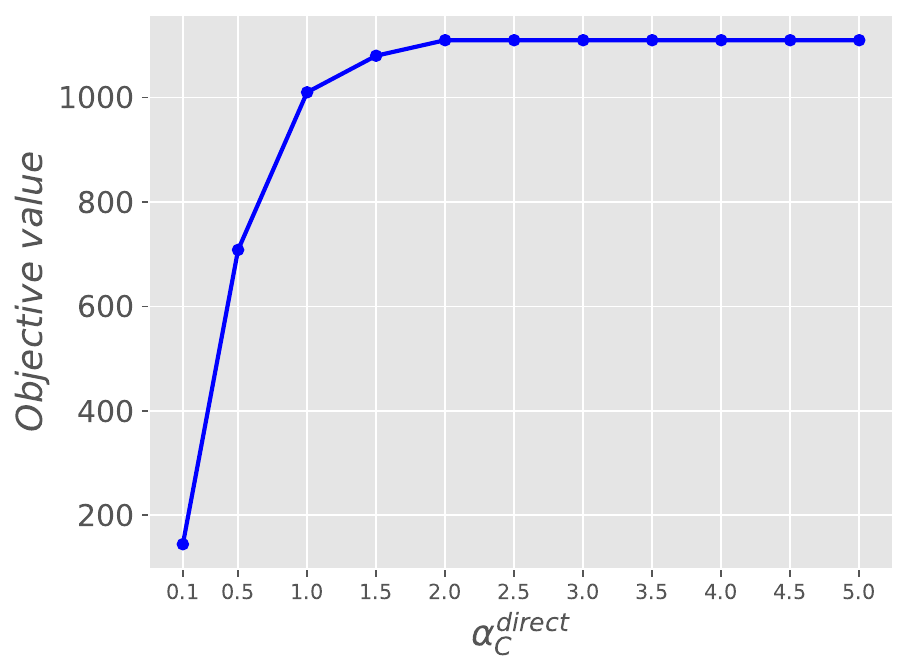}\label{fig:alternate_sensitivity_b}}
 \subfloat[Number of students using alternate modes]{\includegraphics[scale=.33]{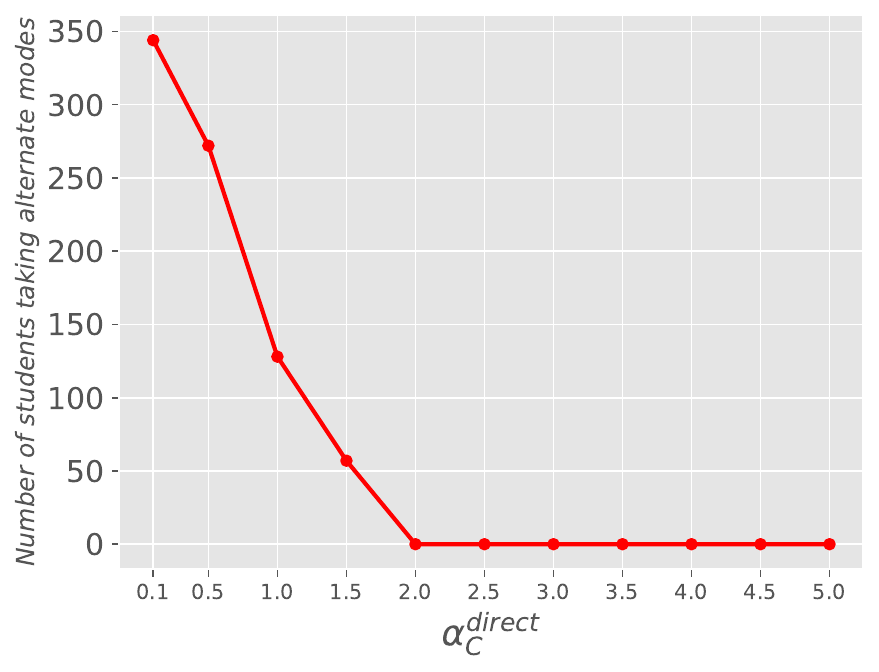}\label{fig:alternate_sensitivity_c}}%
 \caption{Sensitivity analysis for the cost parameter $\alpha_C^{direct}$ of alternate modes.}%
 \label{fig:alternate_sensitivity}%
\end{figure}

\subsubsection{Node compression techniques}

To account for the impact of virtual walking distance parameter in the node compression techniques, we conduct a sensitivity analysis considering distances ranging from 0 to 0.5 with a step size of 0.1. All experiments are conducted on the Craig Kimbrel School (11 door-to-door students out of 71 students) without applying any edge pruning techniques. Also, alternative modes are not considered in this experiment. The results are shown in Table \ref{tab:virtual_walking_distance}. As expected a larger virtual walking distance leads to a reduction in the number of bus stops, which in turn leads to an large reduction in the feasible trip configurations. 

\begin{table}[h!]
\centering
\caption{Sensitivity analyses results for virtual walking distance in node compression techniques. $wd^*$ stands for the virtual walking distance of students need door-to-door pickups.}
\label{tab:virtual_walking_distance}
\begin{tabular}{m{2em} m{2em} m{4em} m{3em} m{2em} m{3em} m{3em}}
 \hline
 $wd^*$ & $N_{M*}$ & $N_{T_b}$ & SND & $N_B$ & GAP & T \\
 \hline
 0 & 26 & 9840120 & 443.75 & 2 & OPT & 3701 \\
 0.1 & 25 & 3581060 & 447.11 & 2 & OPT & 354 \\
 0.2 & 23 & 1105815 & 645.18 & 3 & 24.7\% & 21683 \\
 0.3 & 22 & 499623 & 446.35 & 2 & OPT & 37 \\
 0.4 & 20 & 112040 & 649.04 & 3 & OPT & 585 \\
 0.5 & 20 & 107253 & 648.97 & 3 & OPT & 567 \\
 \hline
\end{tabular}
\end{table}

Regarding the solution quality, the scenario without introducing a virtual walking distance ($wd^* = 0$) produces optimal routes with the minimum cost. Under the optimal scenario, there are 26 bus stops to be served and around 10 million feasible trips considered in the optimization problem. The optimal routes consist of two school buses with 43.75 miles travelled in total\footnote{Each bus costs 200\$ per day and operating a bus costs 1\$ per mile.}. For other scenarios considering a virtual walking distance ($wd^* > 0$), the objective value consists of bus owning cost, bus operation cost and penalties for door-to-door students assigned to bus stops. There is no monotonic relationship between virtual walking distance and objective value due to heterogeneous penalties under different scenarios. On the other hand, larger virtual walking distances lead to more buses used to transport students. This is intuitive; the node compression step minimizes the number of bus stops, and fewer stops lead to crowded bus stops and shorter feasible trip configuration list, which could reduce possibilities for having less buses. 
        
It is worth mentioning that the scenario with 0.2 virtual walking distance includes a "structurally hard" ILP, which can not be solved optimally within the maximum computation time. Overall, the virtual walking distance in the node compression technique induces a trade-off between solution quality and computation complexity. In the case of the Craig Kimbrel School, a 0.5 mile virtual walking distance could cut down 99\% of feasible trips. As the number of binary variables increases exponentially with respect to the number of bus stops, considering virtual walking distances plays a significant role on solving large-scale SSRPs. 

\section{Discussion}
\label{sec:disc}

This paper proposes a shareability network based approach for solving large-scale SSRPs, a reduced SBRP under a single school setting.
We simplified the ILP for solving the SSRP by building the connection between SSRP and WSCP by decomposition through the shareability network.
Moreover, we presented a node compression technique and heuristic edge pruning techniques to obtain a simplified ILP and enable tractable computation of the SSRP at-scale. 
The node compression technique used an ILP to generate bus stops while satisfying maximum walking distance constraints for all students and decreases the number of nodes in the shareability network. 
In contrast, the edge pruning techniques were heuristics that were applied to reduce the density of the shareability network, and worked well in practice.
We evaluated our solution using synthetic dataset from the BPS and show that our approach could compute decent solutions for large-scale SSRP instances. 
Two state-of-the-art SSRP solving techniques were compared with our proposed shareability network based decomposition method to further evaluated performances of our proposed approaches.
Finally, sensitivity analyses of the network compression parameters and cost parameter for alternate modes were provided to get insights into how they influenced trade-offs between solution quality and computation complexity.

It is also worth mentioning that our proposed ILP formulation of the SSRP naturally allows alternate modes for students, which based on our experiments could bring a substantial cost reduction on average regarding the cost for school bus schedules compared to bus-only schedules. Practically speaking, schools can reduce costs by cooperating with multiple transportation service providers (e.g., transit agencies and TNCs) to offer several options for students to travel between home and school. Our approach allows prescribing these options at an individual student level, which allows for accommodating both individual student preferences and legal/policy considerations at the student level. 
The introduction of alternate modes in the school bus scheduling can bring benefits to both the schools by cutting costs and students by providing more options. This also provides an opportunity for TNCs to be integrated into the school transportation ecosystem when appropriate. 

The main limitation of this study is a simplified setting of SSRP which coincides with the problem specification given by the BPS~\cite{BPS_data}.
We consider the SSRP without allowing mixed loads for students from different schools. 
Further work could generalize our proposed methodologies to the mult-school SBRP allowing mixed loads.
We also separated bus stop selection and route generation steps and solved them sequentially, which led to optimality losses.
Further work could incorporate the node compression step into the ILP problem for computing optimal school bus schedules.

This work is the first attempt to adapt techniques from the ridepooling literature on the shareability networks to the SSRP and connect SSRP with the WSCP. 
The key extension for enabling these techniques to work in practice for large-scale SSRP instances is the compression of the shareability network. 
One important future direction is to develop more sophisticated and nuanced edge pruning schemes that more precisely target edges that are unlikely to be relevant to optimal solutions. 
Moreover, the simplified ILP we presented can be combined with state-of-the-art ILP solving techniques (e.g. column generation) to solve extreme large-scale SSRP instances or allow for larger $\beta$ values. This is also an area to be explored. Lastly, the shareability network based decomposition method we present is not restricted to solving SSRPs. It can be fairly easily adapted to many vehicle routing problems which require generating optimal routes and schedules, such as many-to-one shuttle services (e.g., airport shuttles, work shuttles, and first last miles services to transit).

\section*{Acknowledgements}
The contents of this report reflect the views of the authors, who are responsible for the facts and the accuracy of the information presented herein. This document is disseminated in the interest of information exchange. The report is funded, partially or entirely, by a grant from the U.S. Department of Transportation’s University Transportation Centers Program. However, the U.S. Government assumes no liability for the contents or use thereof. The authors would also like to thank Yang Liu for his comments on the paper.

\bibliographystyle{model1-num-names}
\bibliography{reference.bib}

\newpage
\appendix
\section{Heuristic insertion path-TSP solver}
\label{apend:tsp}

One of the most time-consuming parts in our proposed decomposition method through the shareability network is generating the bus trip list $T_b$.
In Algorithm \ref{alg1}, we need to solve a path-TSP every time we find a clique in the shareability network to determine the feasibility of this trip. 
The path-TSP itself is NP-hard and it takes an off-the-shelf solver seconds to solve even with a small amount of students as the input.
The problem becomes intractable if we have to check millions of cliques in the Algorithm \ref{alg1}.
Therefore, we propose a heuristic insertion path-TSP solving technique to decrease the computation time while outputting a satisfying feasible bus trip list $T_b$.

\begin{algorithm}[!b]
\caption{Trips feasibility check with the heuristic inserting path-TSP solving technique. Input: a feasible bus trip $\tau \in T_b$, the optimal route $p_\tau^*$ for the trip $\tau$, the optimal travel time $t_{\tau}^*$ for the trip $\tau$, a student $s$, maximum travel time $t^{max}$, bus capacity $C$, travel time function between any two vertices in the road network $\{t_{ij}|\forall i,j \in V_r\}$.}
\label{alg3}
\begin{algorithmic}[1]
\Function{HeuristicFeasibilityCheck}{$\tau, p_{\tau}^*,t_{\tau}^*, s, t^{max},C,\{t_{ij}|\forall i,j \in V_r\}$}
    \State $n \gets |S(\tau)|$
    \State $p_{\tau}^* := [s_1,s_2,...,s_n]$
    \State $t^* \gets \infty, p^* \gets \emptyset$
    \For{$i$ in $[0,1,...,n]$}
        \If{i = 0}
            \State $t' \gets t_{v_sv_{s_1}} + t_{\tau}^*$ 
            \If{$t' < t^*$}
                \State $t^* \gets t'; p^* \gets [s,s_1,s_2,...,s_n]$
            \EndIf
        \ElsIf{i = n}
            \State $t' \gets t_{v_{s_n}v_s} + t_{v_sv_d} - t_{v_{s_n}v_d} + t_{\tau}^*$ 
            \If{$t' < t^*$}
                \State $t^* \gets t'; p^* \gets [s_1,s_2,...,s_n,s]$
            \EndIf
        \Else
            \State $t' \gets t_{v_{s_i}v_s} + t_{v_sv_{s_{i+1}}} - t_{v_{s_i}v_{s_{i+1}}} + t_{\tau}^*$ 
            \If{$t' < t^*$}
                \State $t^* \gets t'; p^* \gets [s_1,...,s_i,s,s_{i+1},...,s_n]$
            \EndIf 
        \EndIf
    \EndFor
    \If {$t^* \leq t^{max}$ and $|S(\tau)| + 1 \leq C$}
        \State \textbf{return} true
    \Else
        \State \textbf{return} false
    \EndIf
\EndFunction
\end{algorithmic}
\end{algorithm}

The essential idea of this heuristic insertion path-TSP solving technique is that if we know the optimal path $p^*$ for a set of $k$ students $S_k$, we generate a sub-optimal path for $k+1$ students $S_k \cup \{ s\}$ by fixing $p^*$ and inserting the student $s$ into the order of $p^*$ where yields a path with the minimal travel time. 
To be specific, we modified the function $\sfunction{FeasibilityCheck}(\tau,t_max,C,\sfunction{PathTsp}(\cdot))$ to Algorithm \ref{alg3}.
For the input of Algorithm \ref{alg3}, we need the optimal path and travel time for a feasible trip $\tau$. 
We can store the optimal routes and travel time once we generate a feasible trip $\tau$ with $k$ students, which will be used when considering trips with $k+1$ students.
 
Algorithm \ref{alg3} yields a sub-optimal route and travel time within linear time.
The experiment results in Section \ref{sec:experiments} show the routes computed by Algorithm \ref{alg3} are decent.

\section{Experiment results without alternate modes: Boston Public Schools (BPS) synthetic benchmark data}
\label{apen:results_base}

This section shows the experiment results without alternate modes for best school bus schedules given synthetic data from the Boston Public School (BPS).
For figures in the following, the red star denotes the school location and blue dots represent student locations. 

\begin{figure*}[h]
\centering
\includegraphics[scale=0.3]{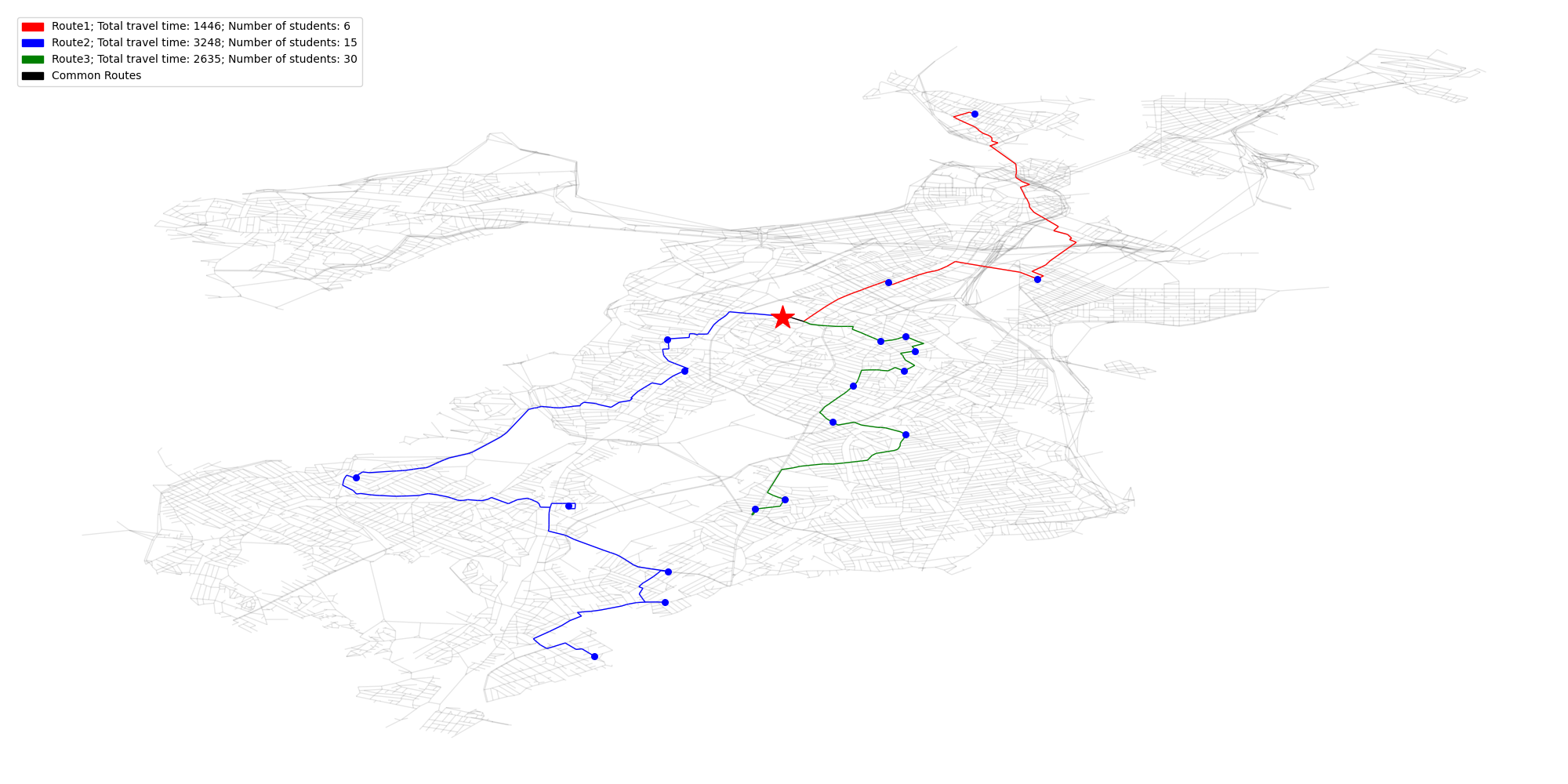}
\caption{School bus schedules for the Tommy Harper (Optimal)}
\end{figure*}

\begin{figure*}[h]
\centering
\includegraphics[scale=0.3]{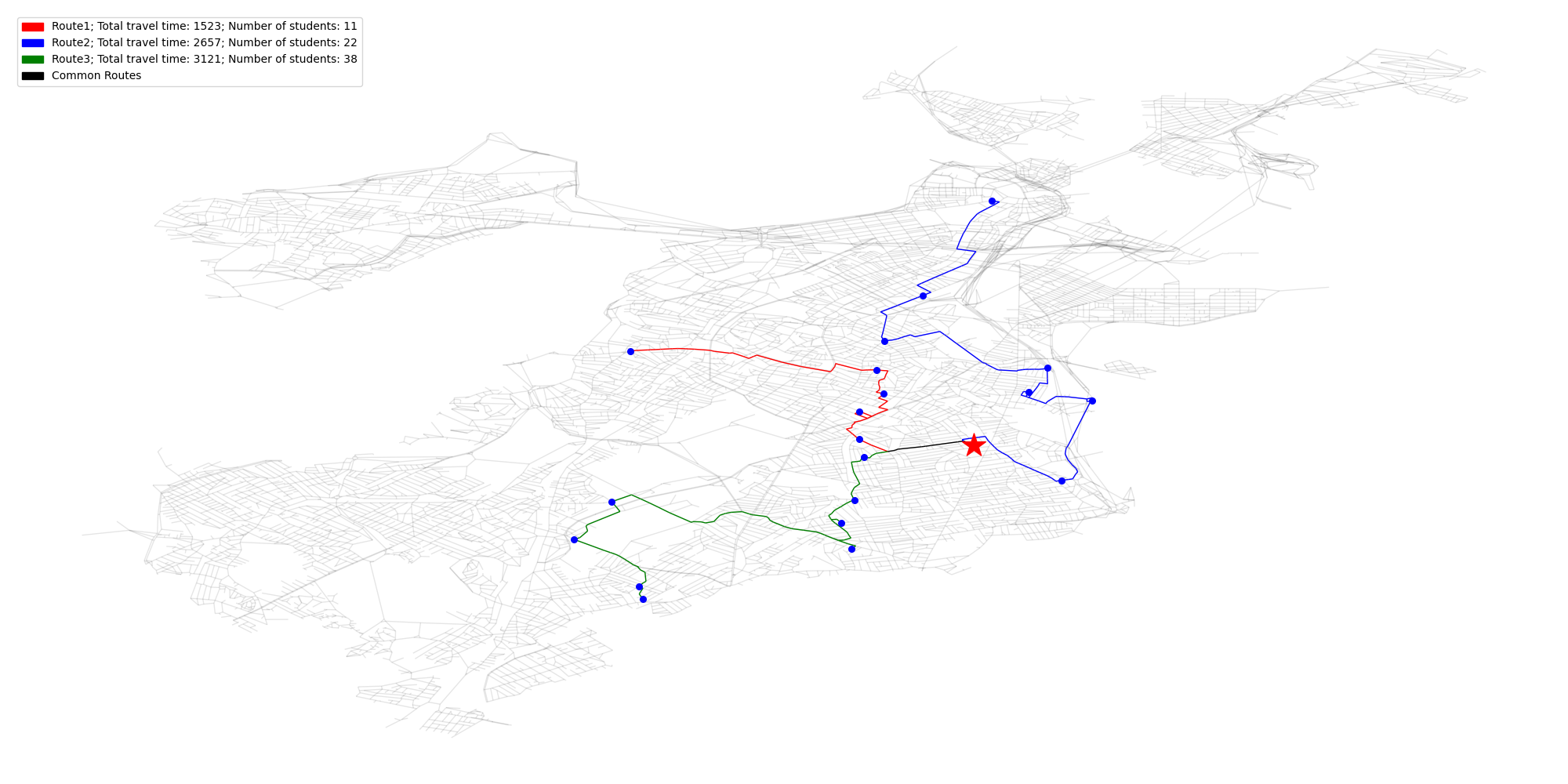}
\caption{School bus schedules for the Craig Kimbrel (Optimal)}
\end{figure*}

\begin{figure*}[h]
\centering
\includegraphics[scale=0.3]{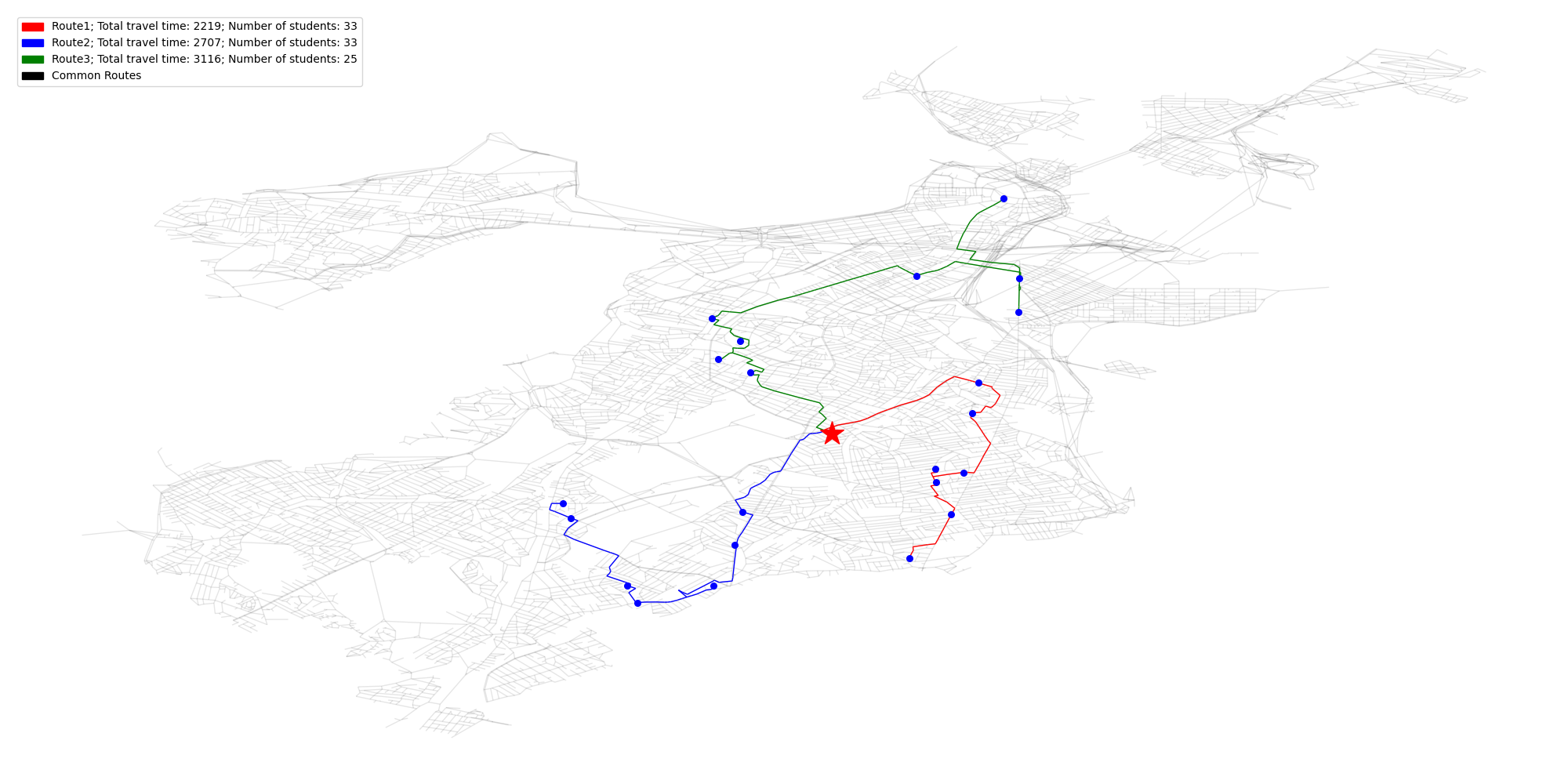}
\caption{School bus schedules for the Deven Marrero (Optimal)}
\end{figure*}

\begin{figure*}[h]
\centering
\includegraphics[scale=0.3]{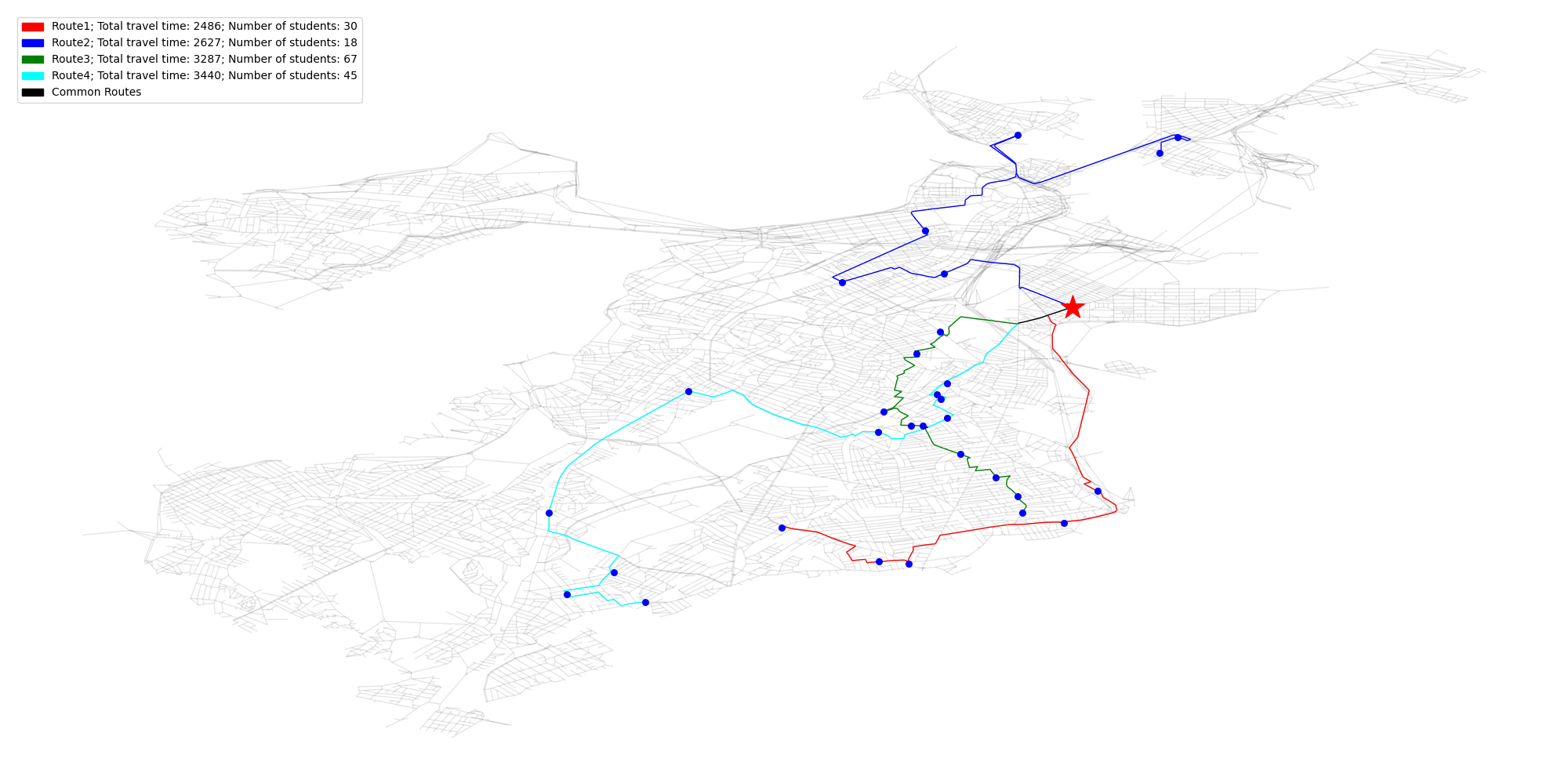}
\caption{School bus schedules for the Frank Malzone (sub-optimal with $\beta = 1.5$, $\gamma = 0.4$)}
\end{figure*}

\begin{figure*}[h]
\centering
\includegraphics[scale=0.3]{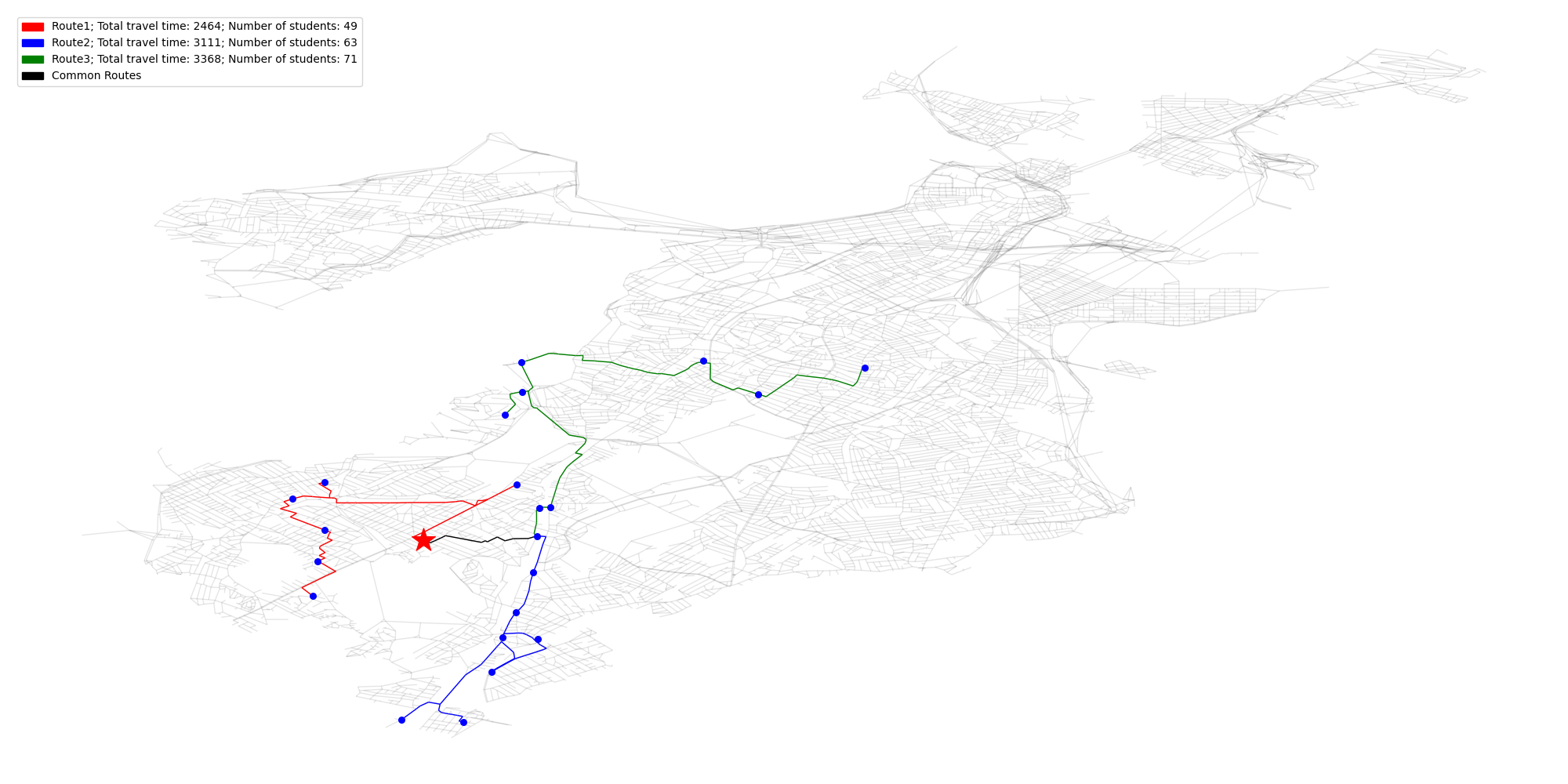}
\caption{School bus schedules for the Dick Williams (Optimal)}
\end{figure*}

\begin{figure*}[h]
\centering
\includegraphics[scale=0.3]{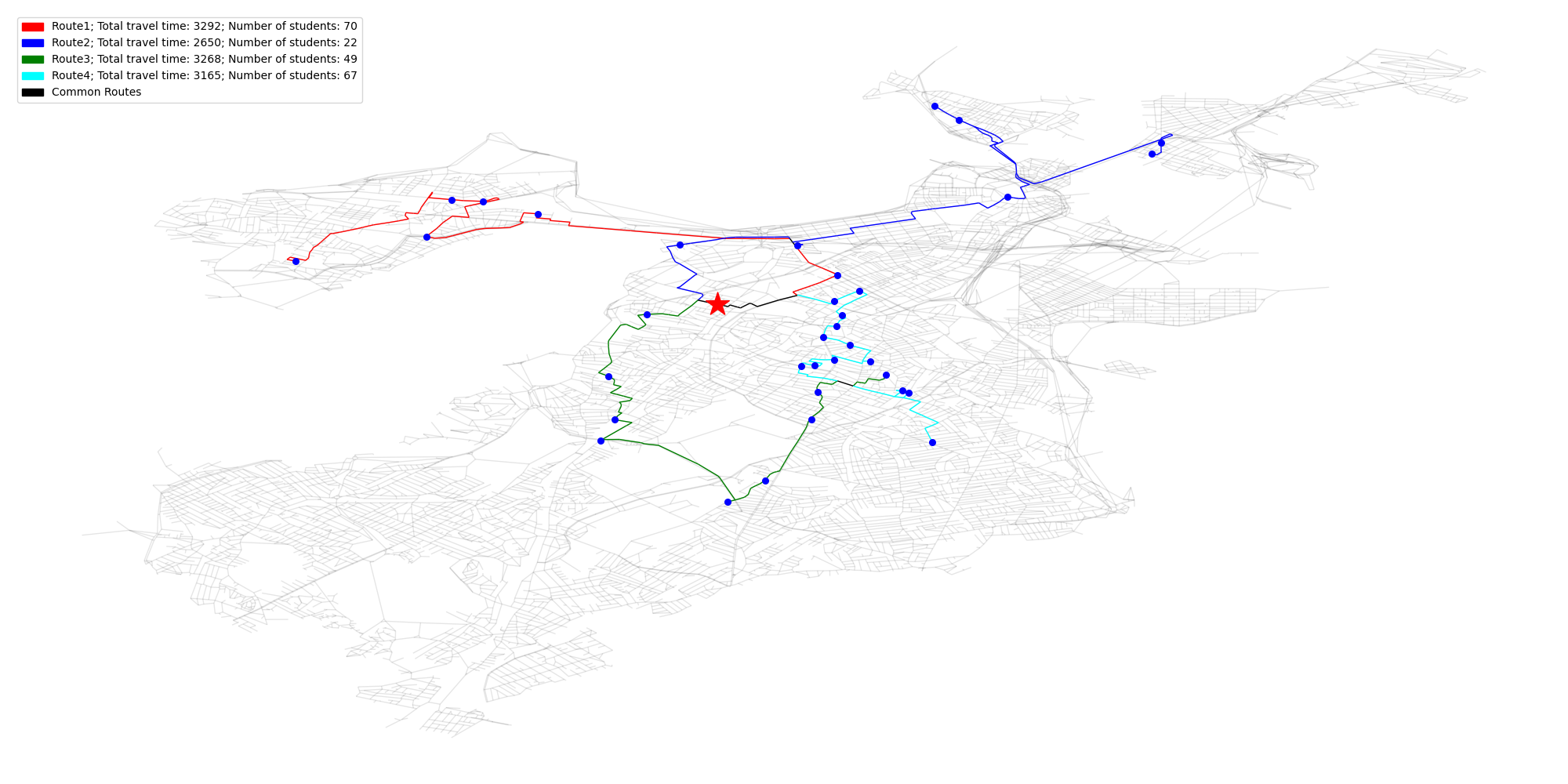}
\caption{School bus schedules for the Dick Bresciani (sub-optimal with $\beta = 1.5$, $\gamma = 0.4$)}
\end{figure*}

\begin{figure*}[h]
\centering
\includegraphics[scale=0.3]{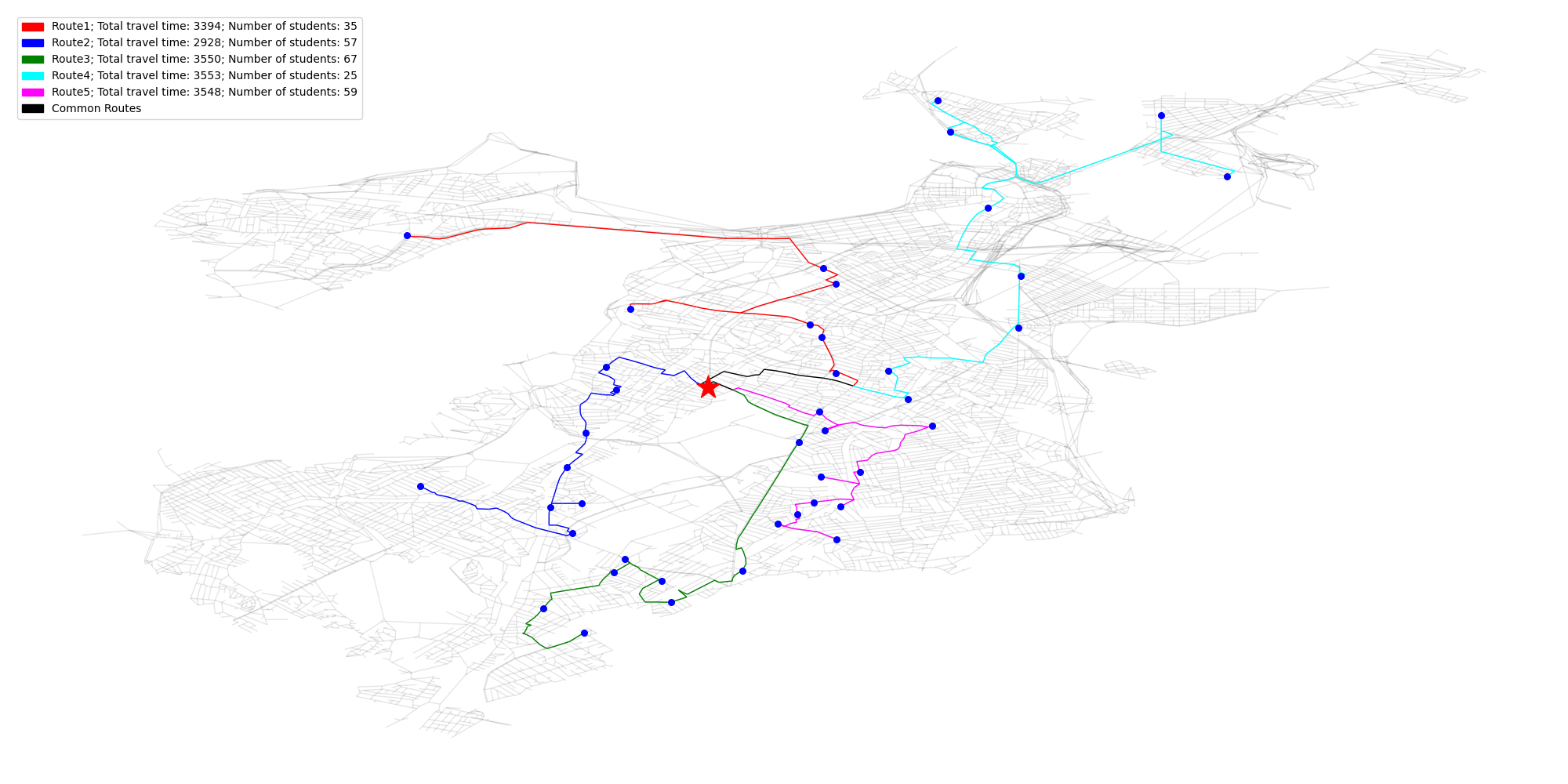}
\caption{School bus schedules for the Dutch Leonard (sub-optimal with $\beta = 2$, $\gamma = 0.4$)}
\end{figure*}

\begin{figure*}[h]
\centering
\includegraphics[scale=0.3]{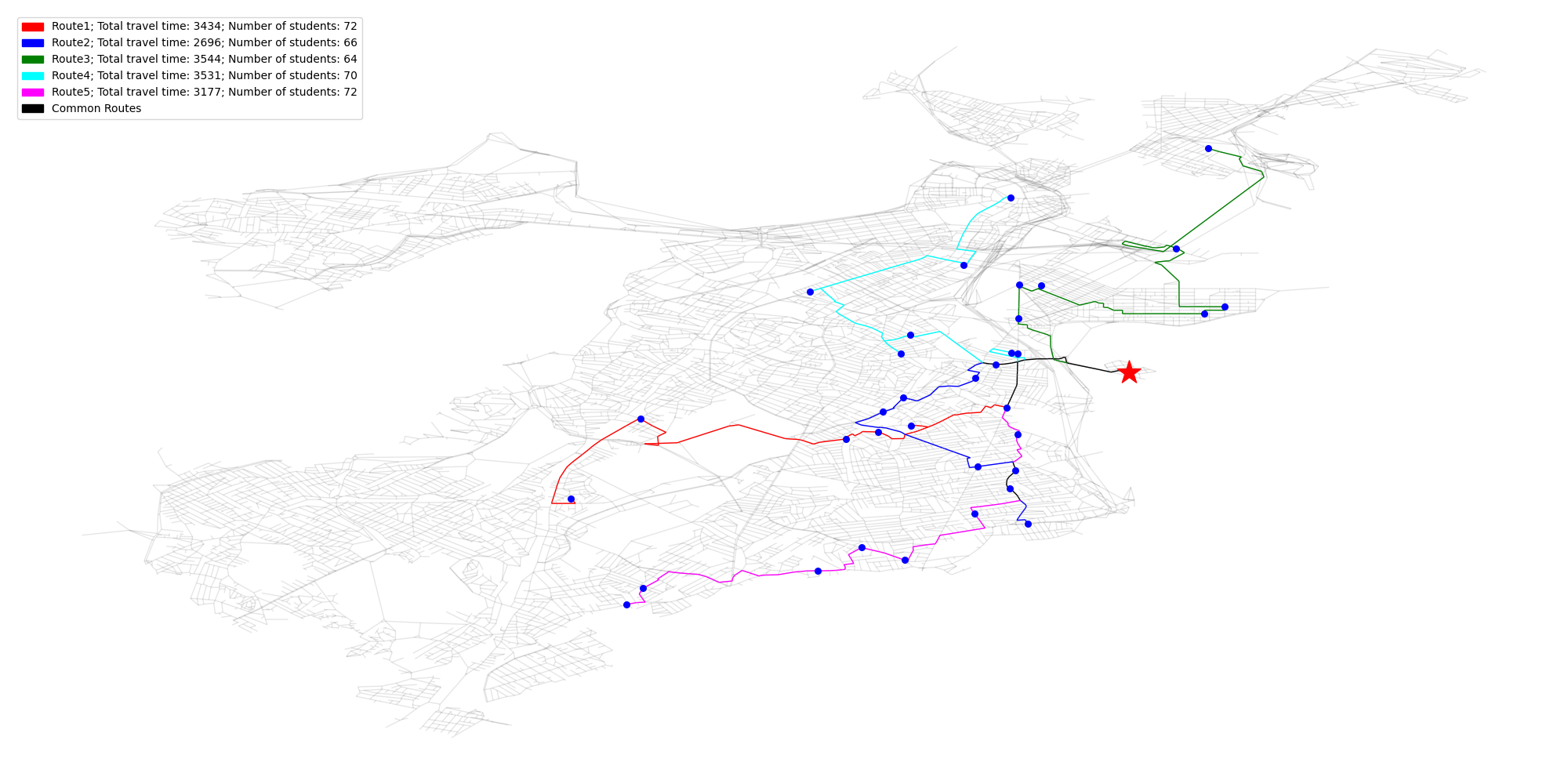}
\caption{School bus schedules for the Christian Vazquez (sub-optimal with $\beta = 3.5$, $\gamma = 0.4$)}
\end{figure*}

\begin{figure*}[h]
\centering
\includegraphics[scale=0.3]{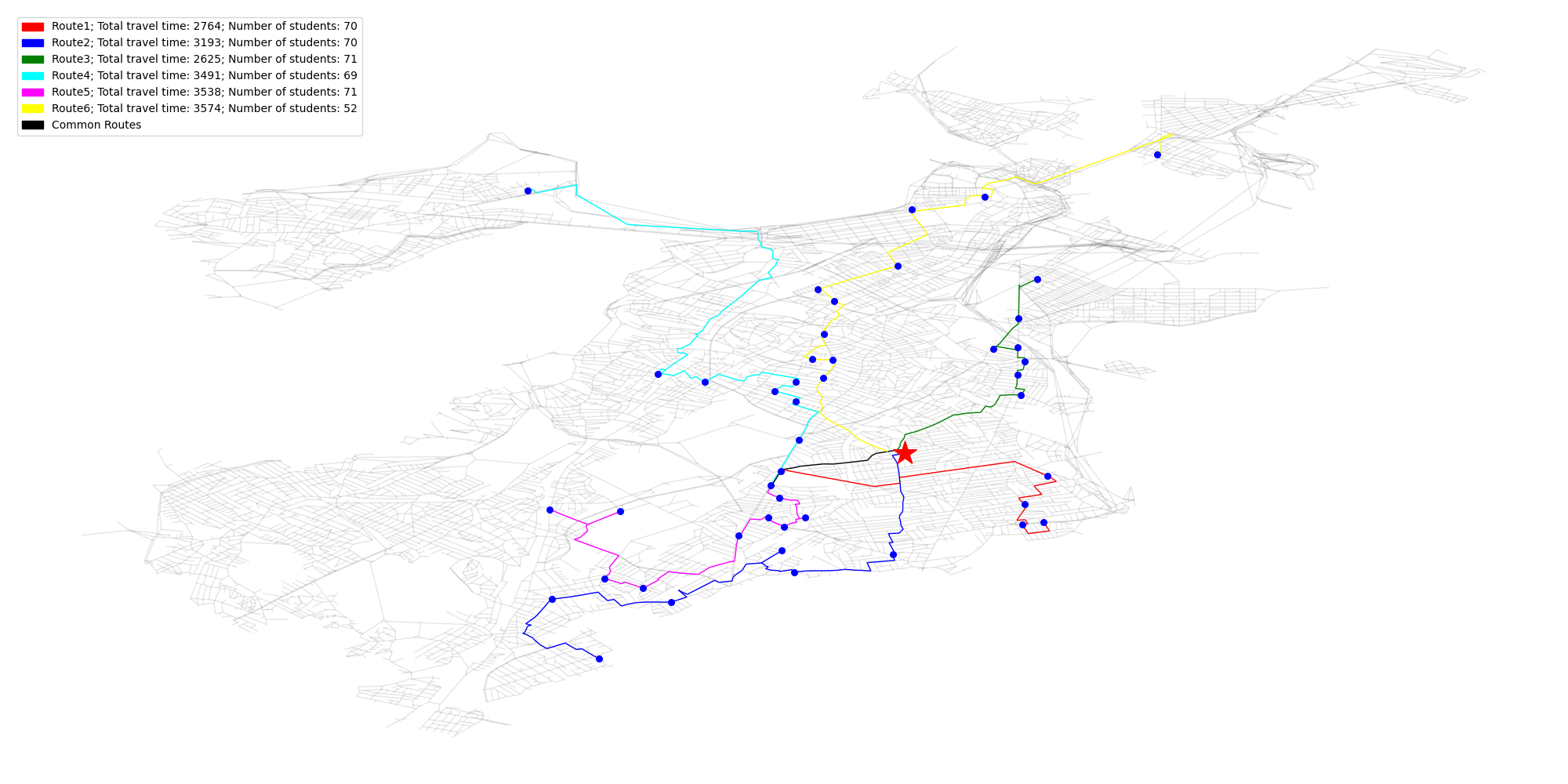}
\caption{School bus schedules for the Dennis Eckerley (sub-optimal with $\beta = 2.5$, $\gamma = 0.4$)}
\end{figure*}

\begin{figure*}[h]
\centering
\includegraphics[scale=0.3]{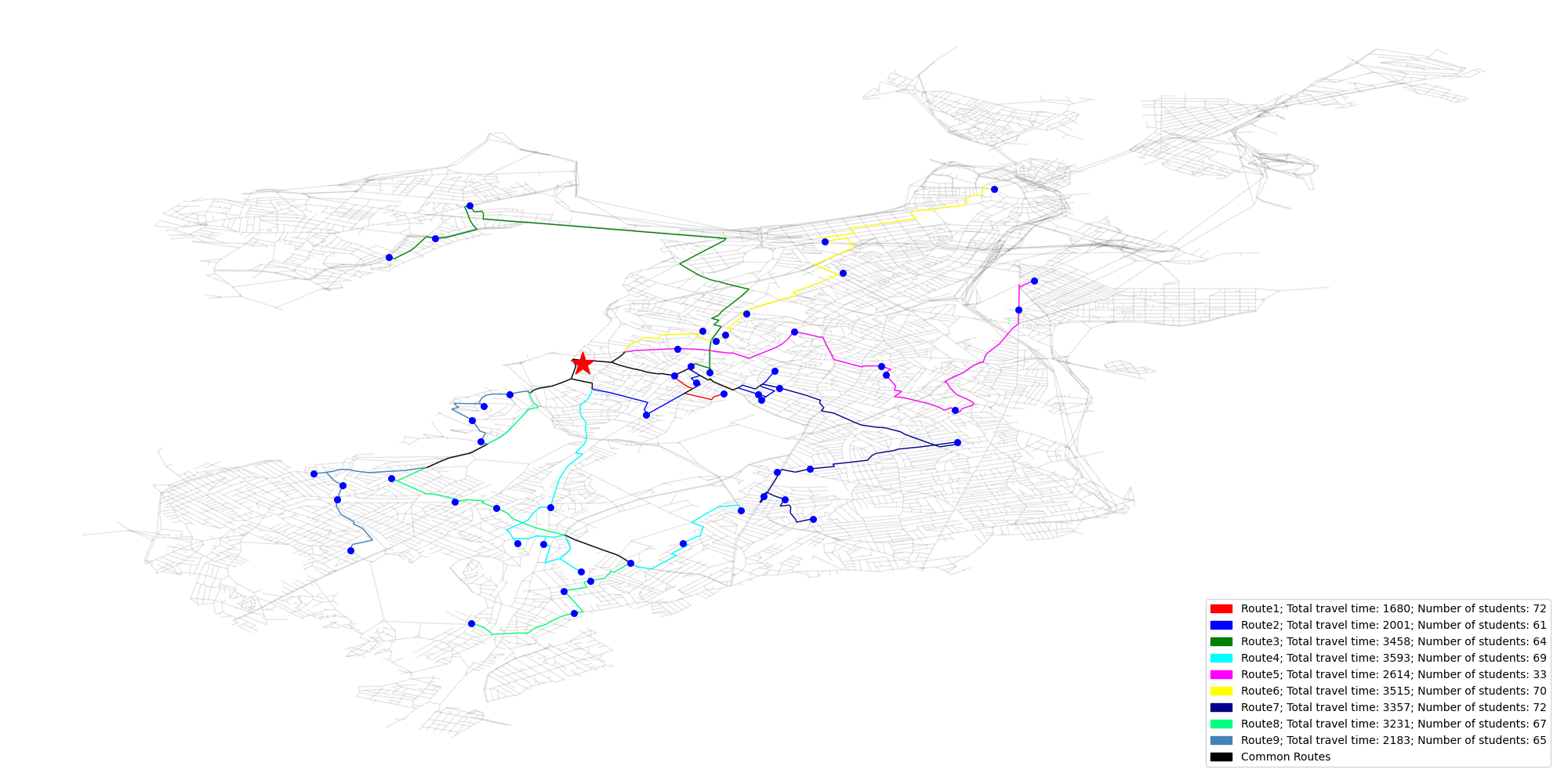}
\caption{School bus schedules for the Rick Ferrell (sub-optimal with $\beta = 2.5$, $\gamma = 0.4$)}
\end{figure*}

\FloatBarrier

\section{Experiment results with alternate modes: Boston Public Schools (BPS) synthetic benchmark data}
\label{apen:results_alternate}

This section shows the experiment results with alternate modes for best school bus schedules given synthetic data from the Boston Public School (BPS).
For figures in the following, the red star denotes the school location, blue dots represent student locations and red crosses indicate students who take dedicated vehicles. 

\begin{figure*}[h]
\centering
\includegraphics[scale=0.3]{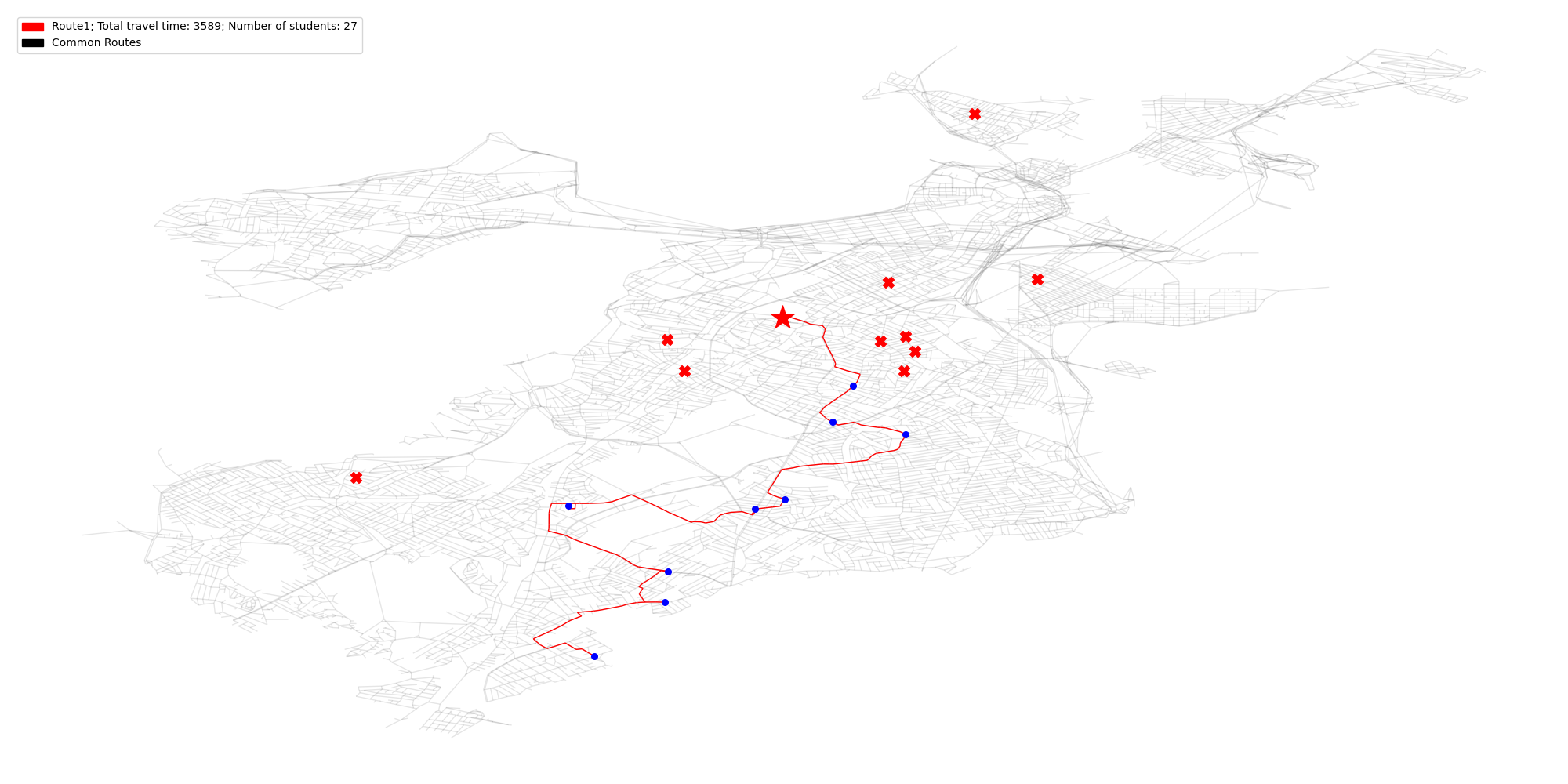}
\caption{School bus schedules for the Tommy Harper (Optimal)}
\end{figure*}

\begin{figure*}[h]
\centering
\includegraphics[scale=0.3]{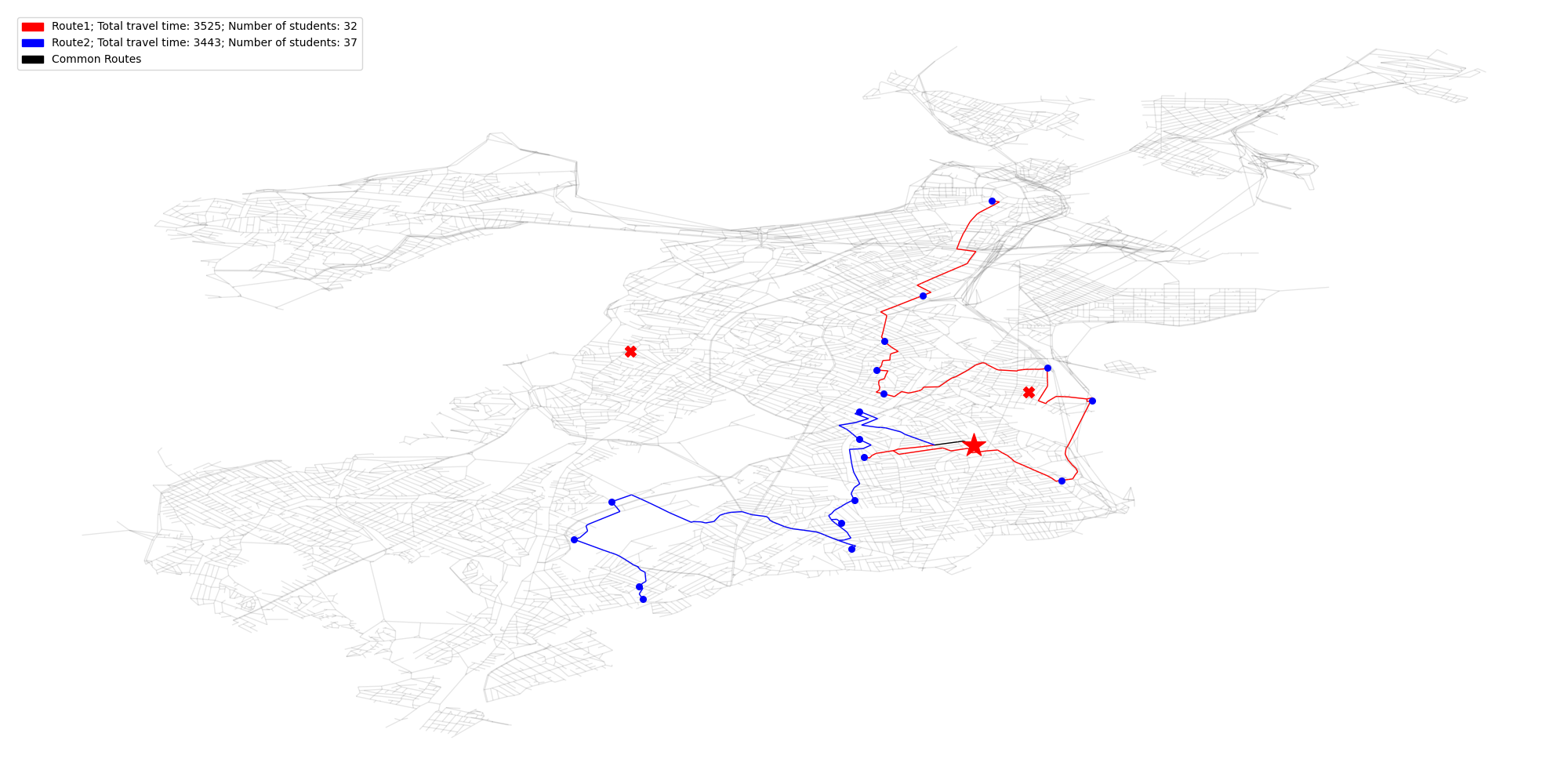}
\caption{School bus schedules for the Craig Kimbrel (Optimal)}
\end{figure*}

\begin{figure*}[h]
\centering
\includegraphics[scale=0.3]{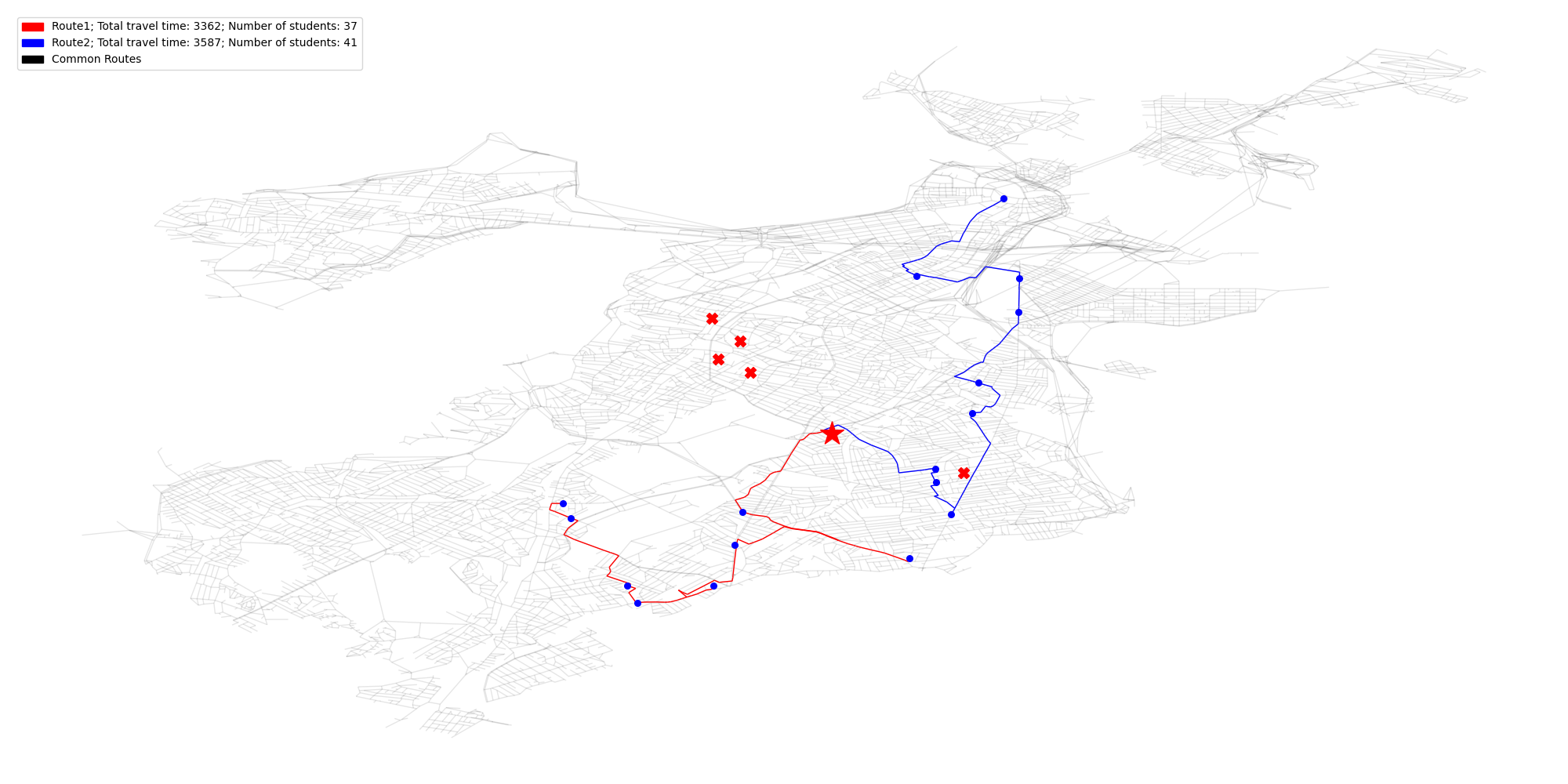}
\caption{School bus schedules for the Deven Marrero (Optimal)}
\end{figure*}

\begin{figure*}[h]
\centering
\includegraphics[scale=0.3]{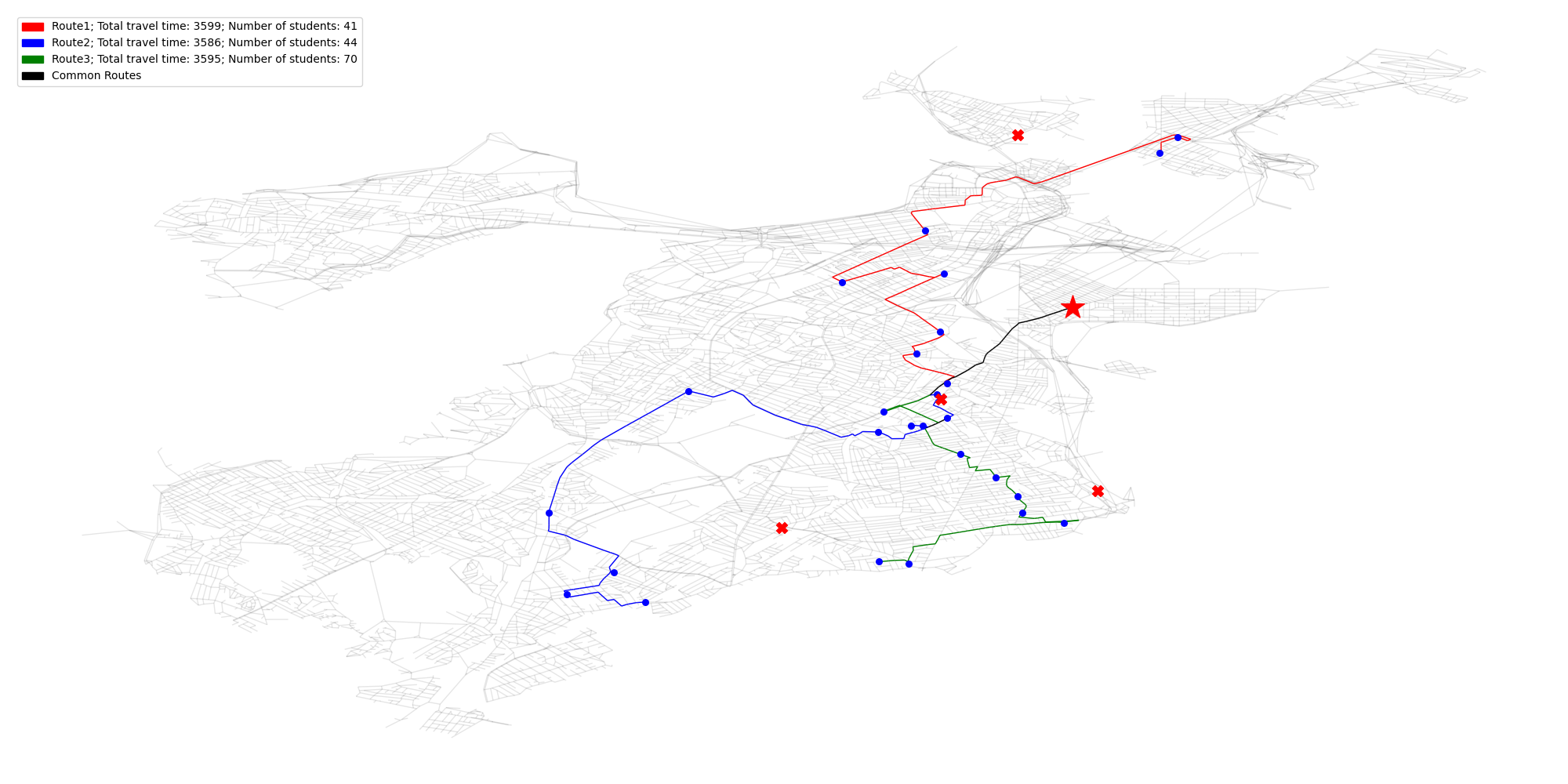}
\caption{School bus schedules for the Frank Malzone (sub-optimal with $\beta = 1.5$, $\gamma = 0.4$)}
\end{figure*}

\begin{figure*}[h]
\centering
\includegraphics[scale=0.3]{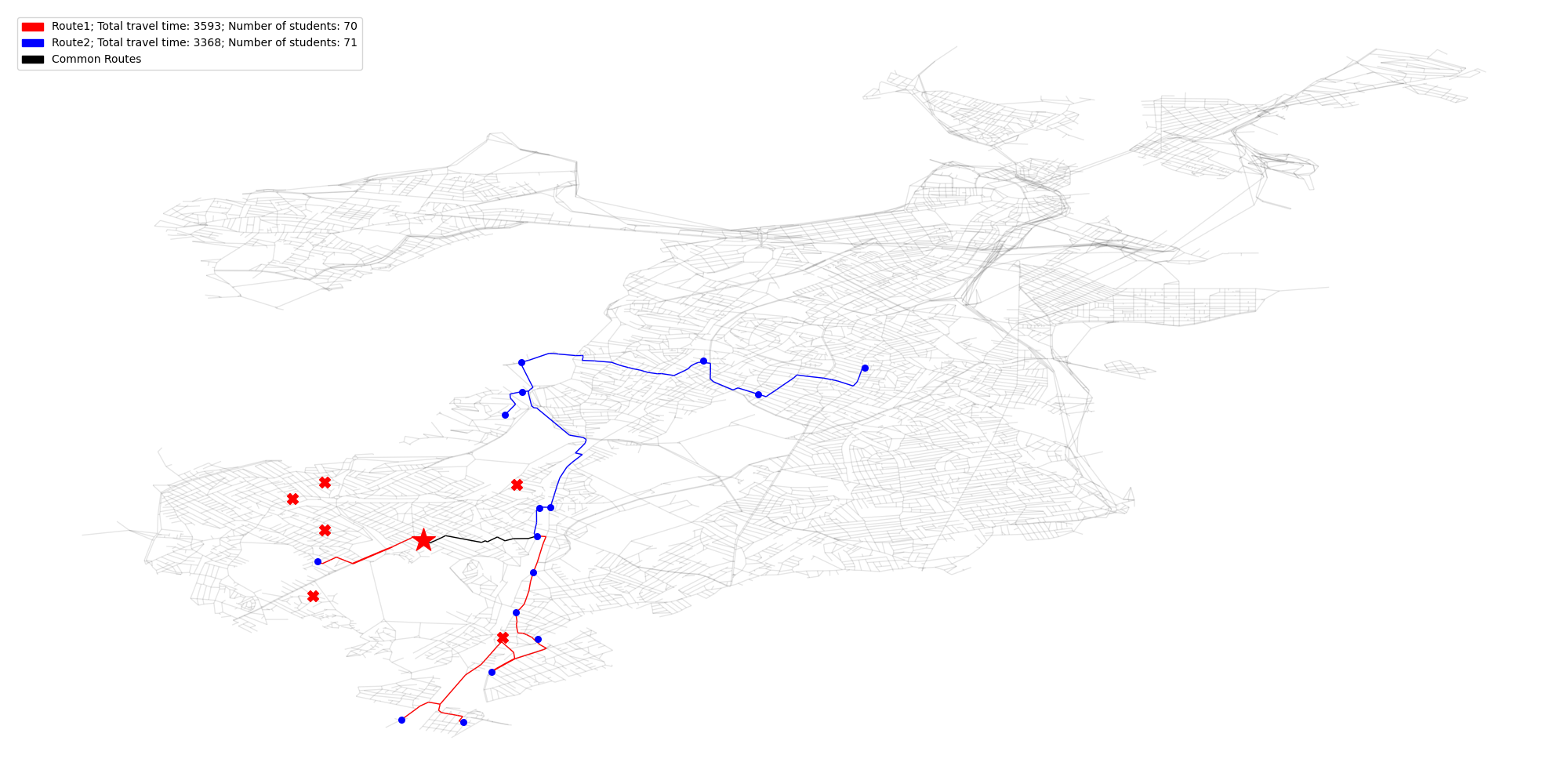}
\caption{School bus schedules for the Dick Williams (Optimal)}
\end{figure*}

\begin{figure*}[h]
\centering
\includegraphics[scale=0.3]{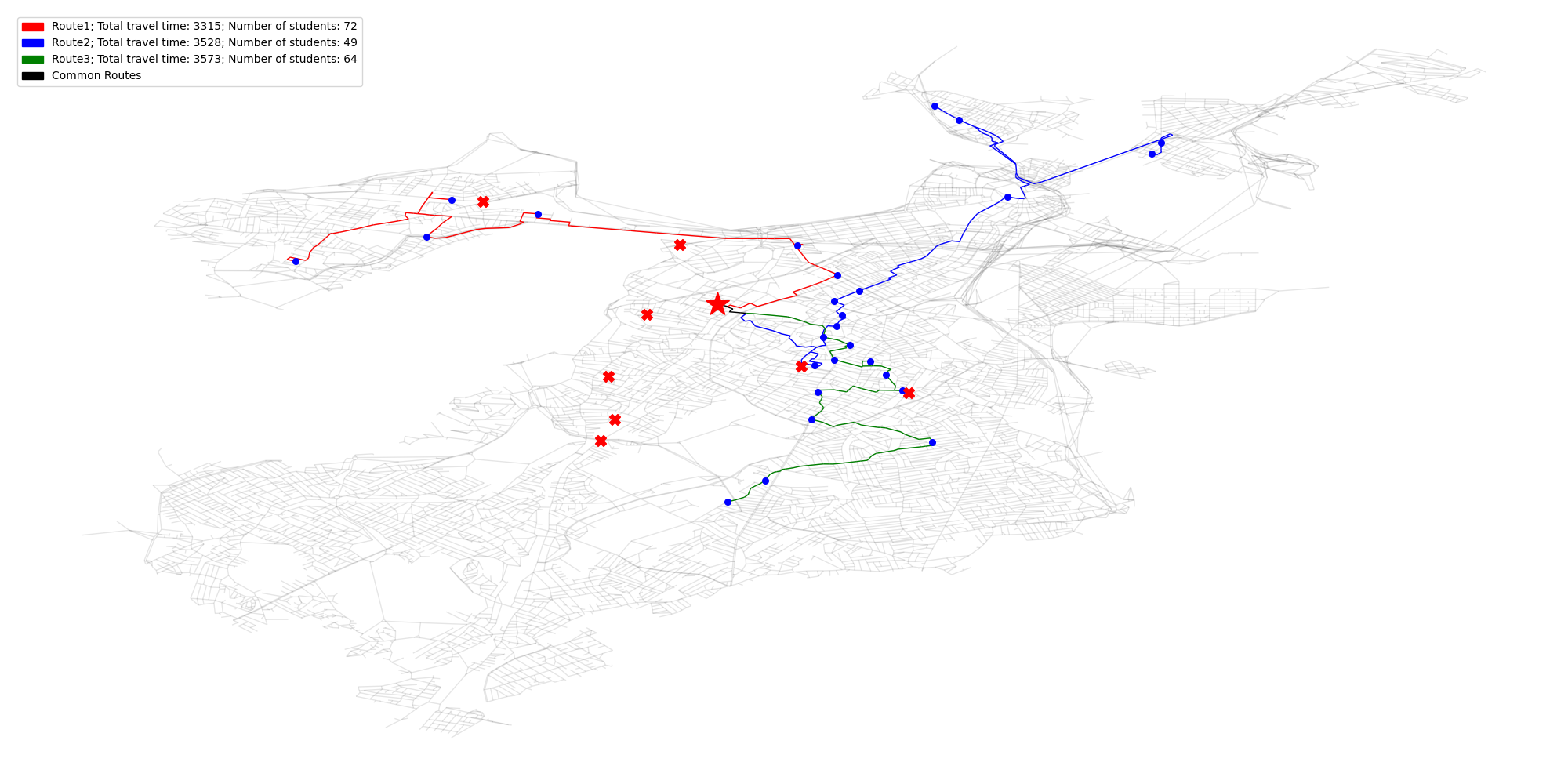}
\caption{School bus schedules for the Dick Bresciani (sub-optimal with $\beta = 1.5$, $\gamma = 0.4$)}
\end{figure*}

\begin{figure*}[h]
\centering
\includegraphics[scale=0.3]{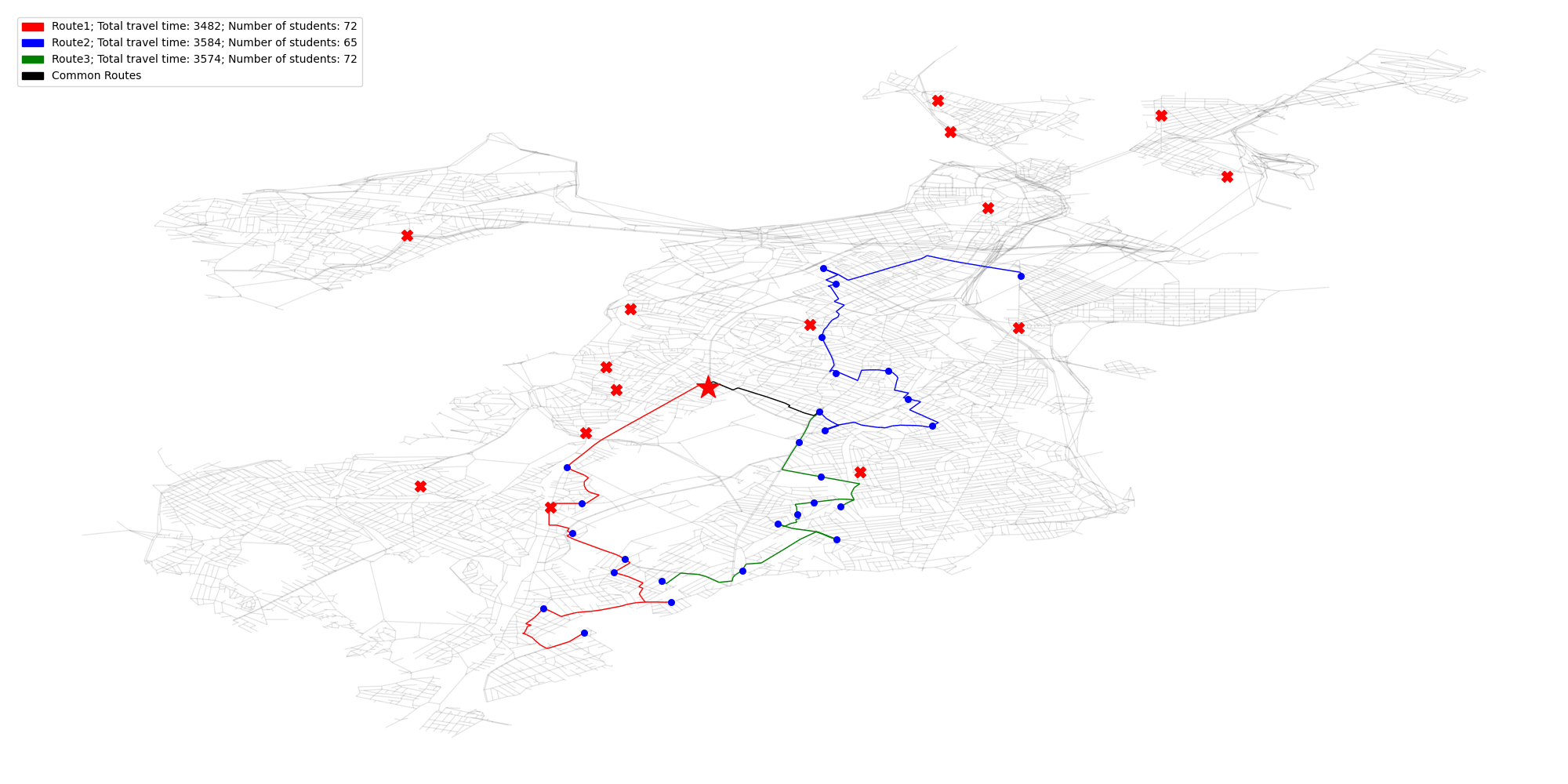}
\caption{School bus schedules for the Dutch Leonard (sub-optimal with $\beta = 2$, $\gamma = 0.4$)}
\end{figure*}

\begin{figure*}[h]
\centering
\includegraphics[scale=0.3]{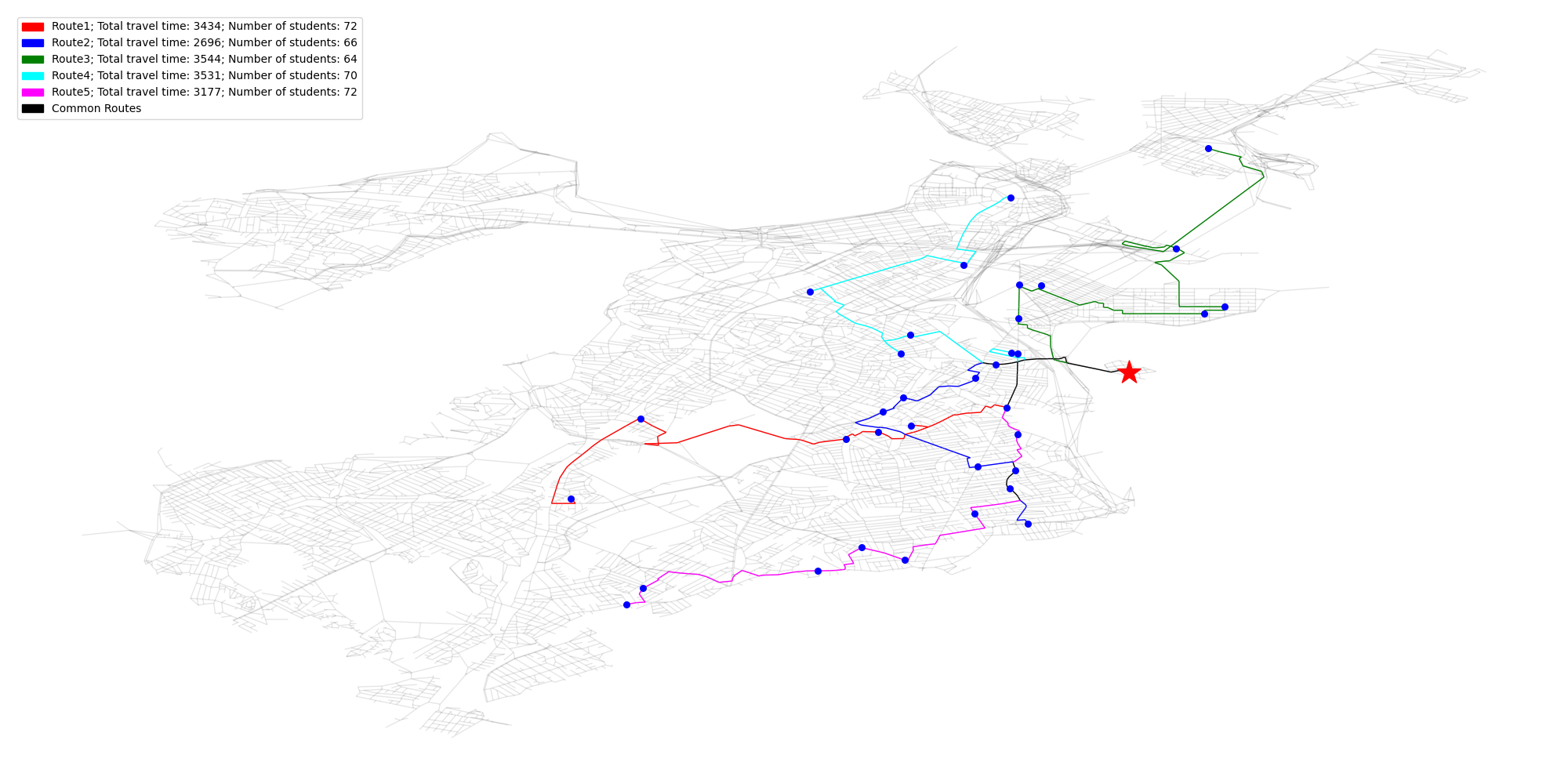}
\caption{School bus schedules for the Christian Vazquez (sub-optimal with $\beta = 3.5$, $\gamma = 0.4$)}
\end{figure*}

\begin{figure*}[h]
\centering
\includegraphics[scale=0.3]{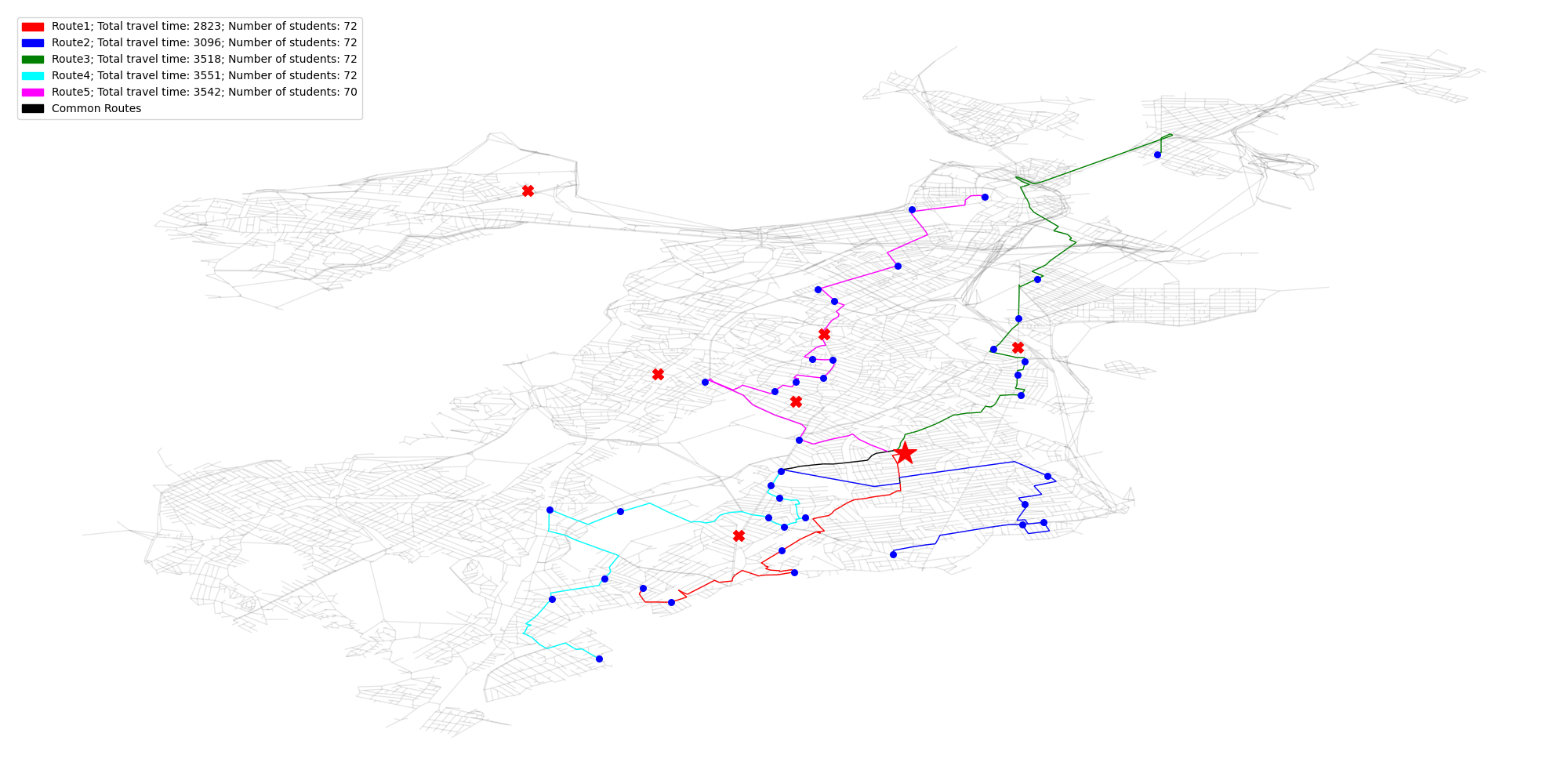}
\caption{School bus schedules for the Dennis Eckerley (sub-optimal with $\beta = 2.5$, $\gamma = 0.4$)}
\end{figure*}

\begin{figure*}[h]
\centering
\includegraphics[scale=0.3]{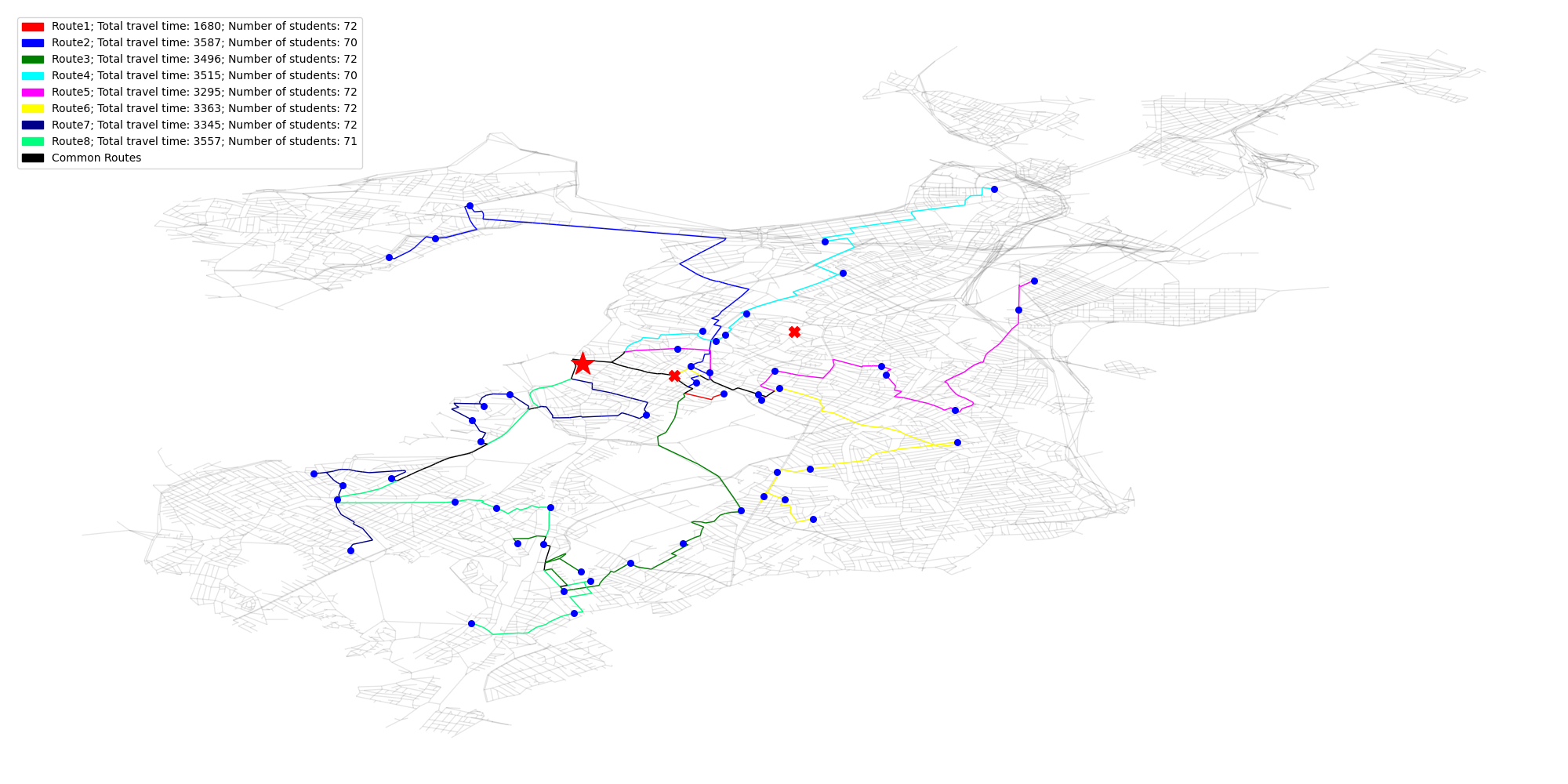}
\caption{School bus schedules for the Rick Ferrell (sub-optimal with $\beta = 2.5$, $\gamma = 0.4$)}
\end{figure*}

\FloatBarrier

\section{Benchmark results}
\label{apen:results_MIT}

This section states the school bus schedules generated by the single school route generation component from the BiRD algorithm. 
For figures in the following, red stars represent school locations and blue dots denote student locations.

\begin{figure*}[h]
\centering
\includegraphics[scale=0.3]{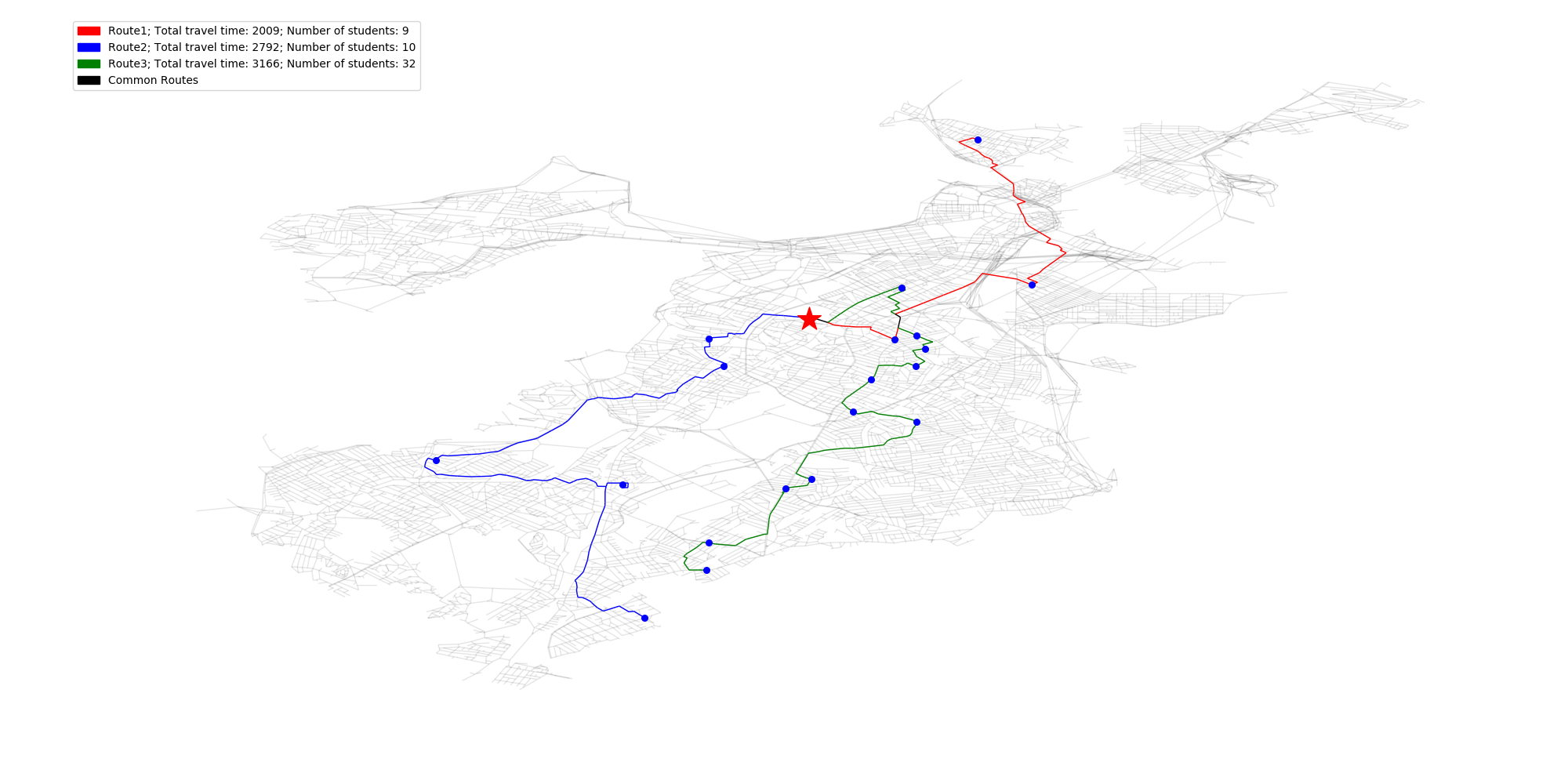}
\caption{School bus schedules for the Tommy Harper with BiRD algorithm}
\end{figure*}

\begin{figure*}[h]
\centering
\includegraphics[scale=0.3]{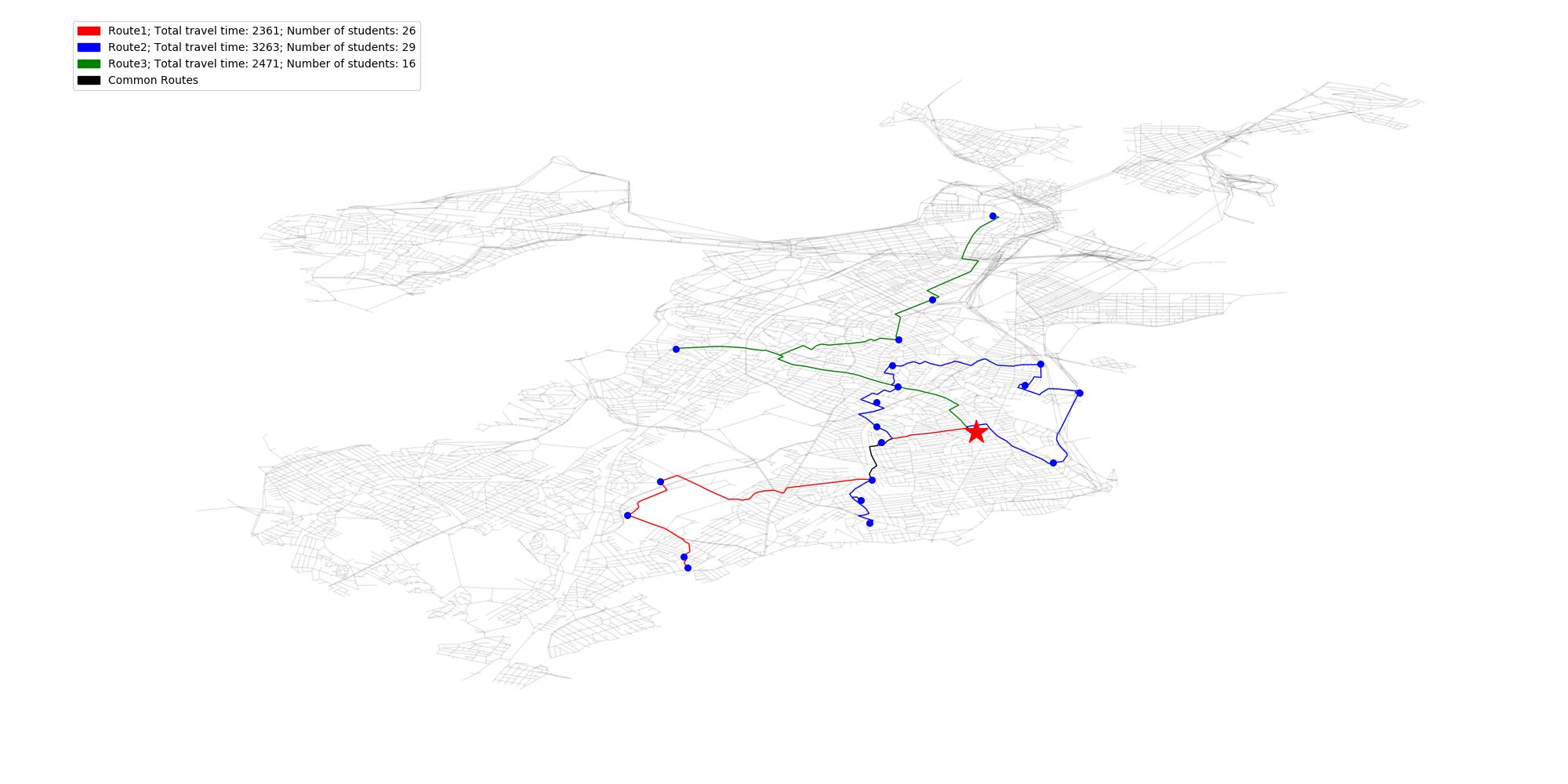}
\caption{School bus schedules for the Craig Kimbrel with BiRD algorithm}
\end{figure*}

\begin{figure*}[h]
\centering
\includegraphics[scale=0.3]{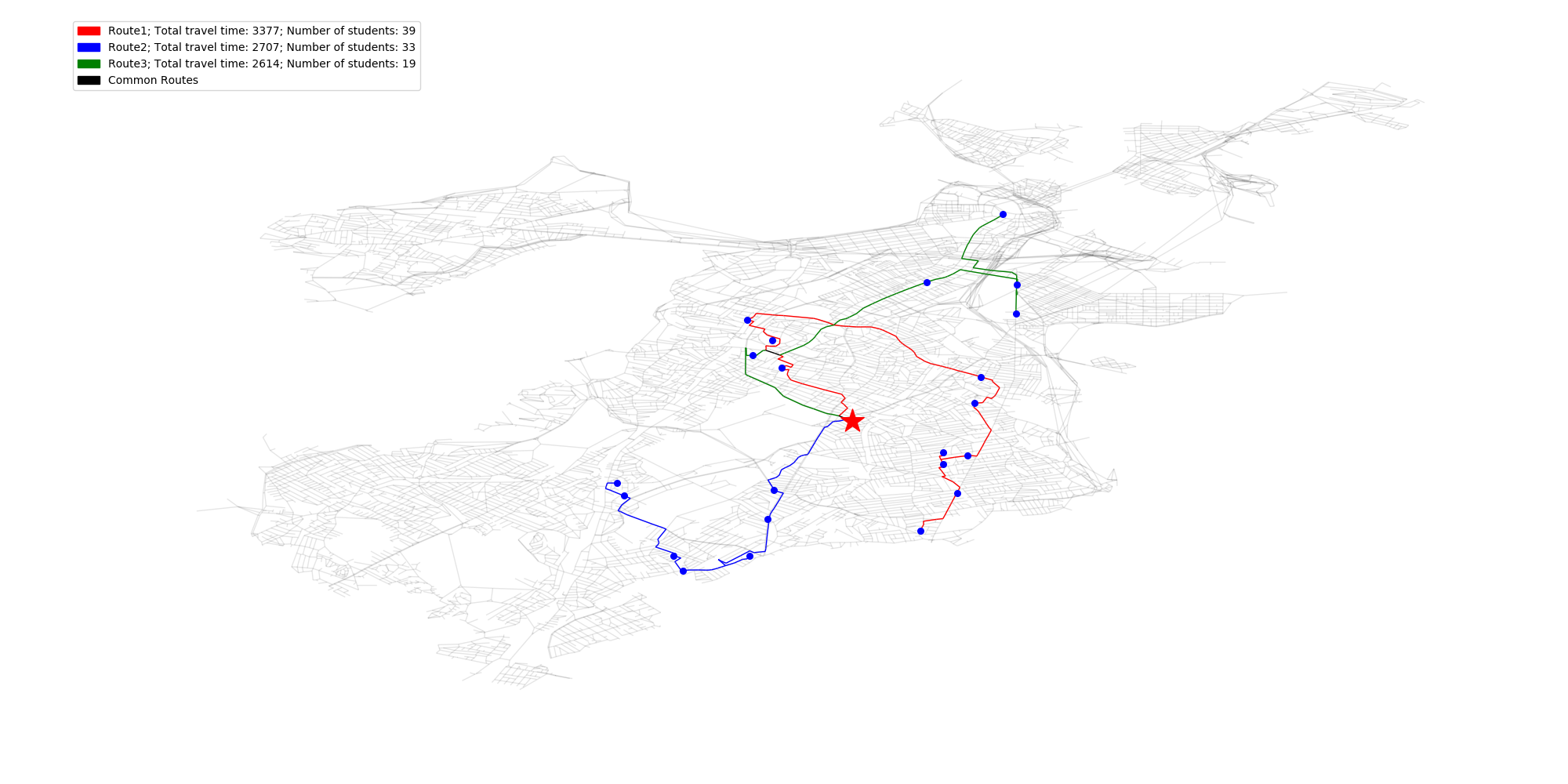}
\caption{School bus schedules for the Deven Marrero with BiRD algorithm}
\end{figure*}

\begin{figure*}[h]
\centering
\includegraphics[scale=0.3]{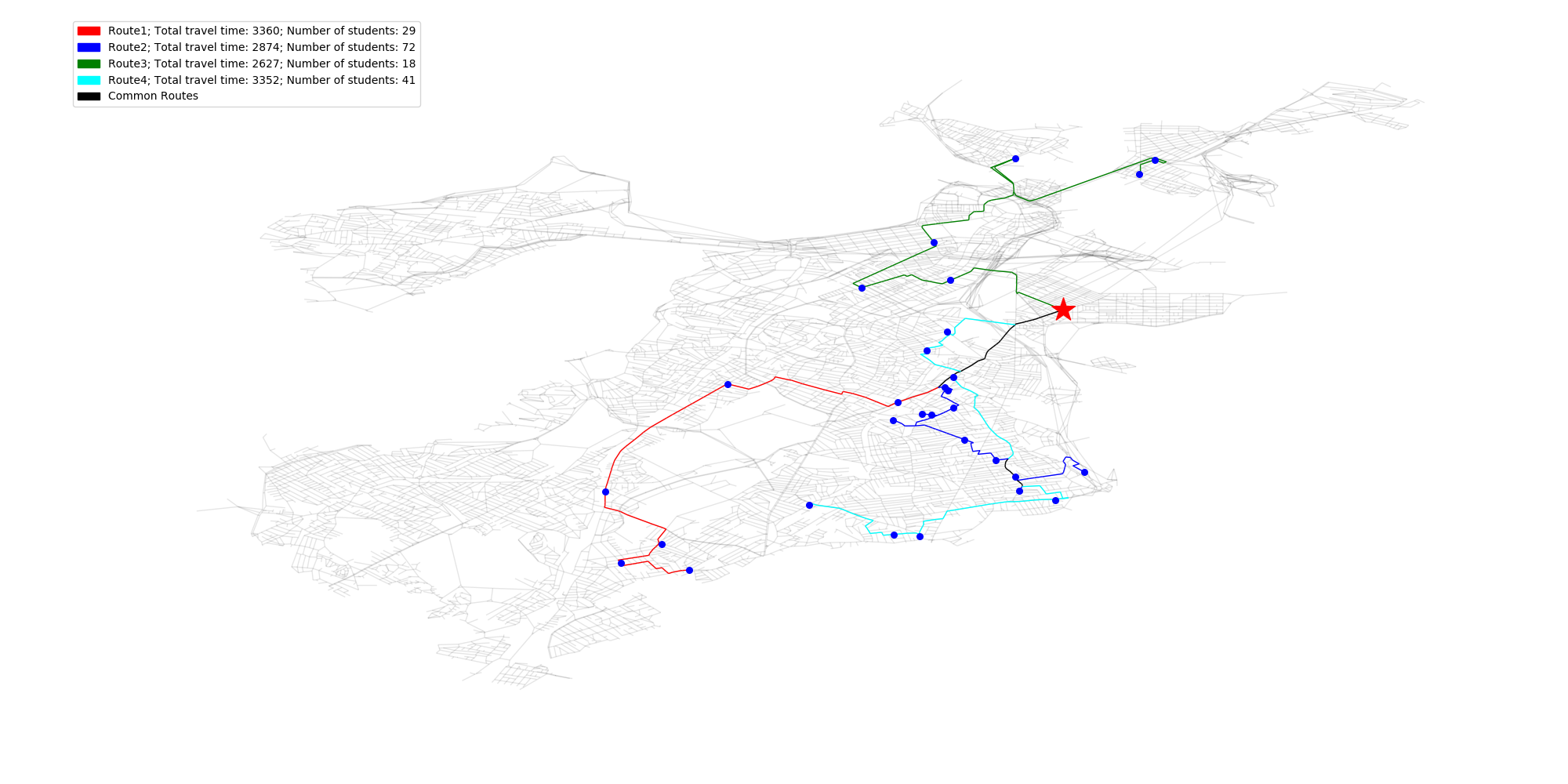}
\caption{School bus schedules for the Frank Malzone with BiRD algorithm}
\end{figure*}

\begin{figure*}[h]
\centering
\includegraphics[scale=0.3]{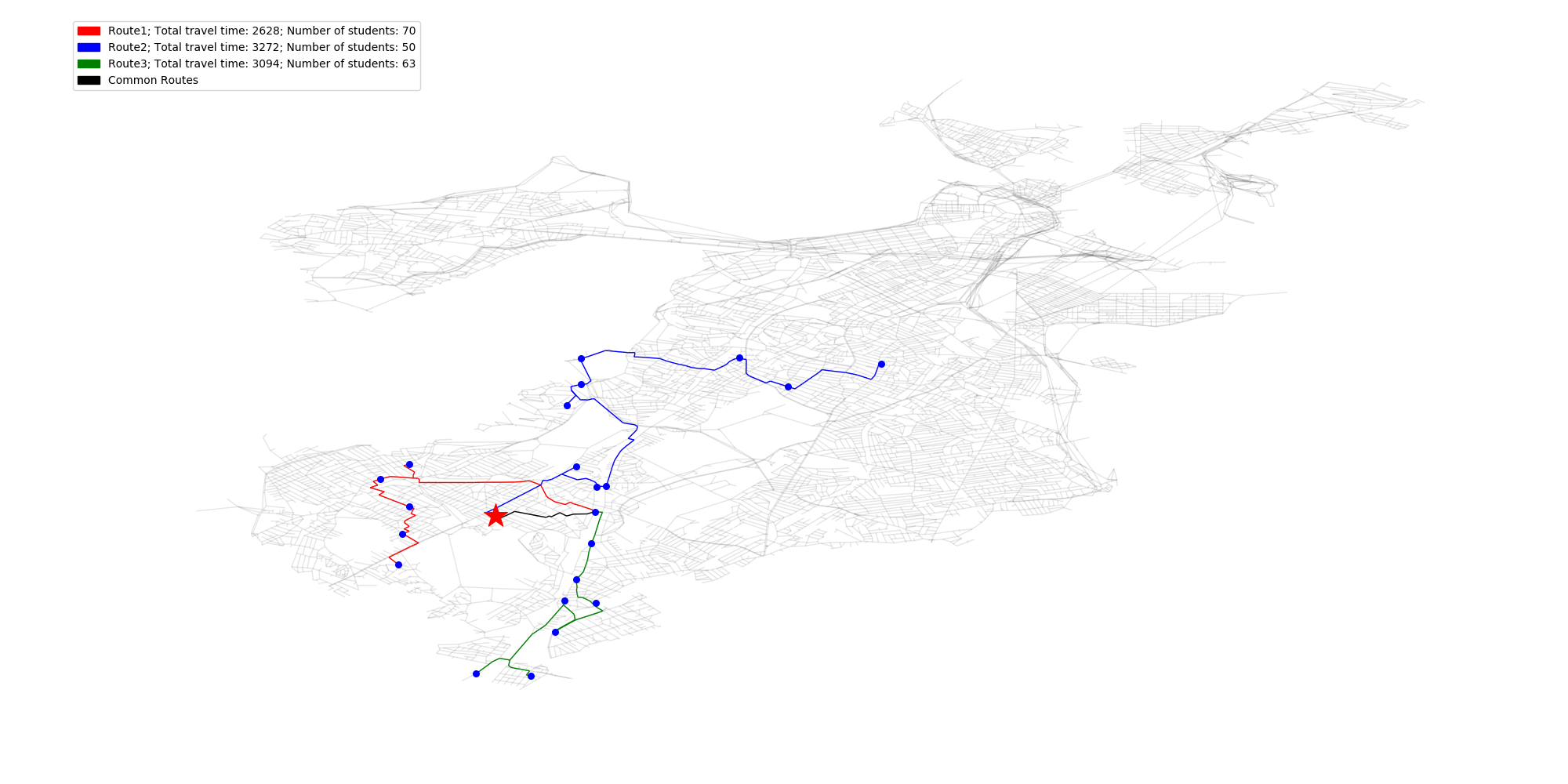}
\caption{School bus schedules for the Dick Williams with BiRD algorithm}
\end{figure*}

\begin{figure*}[h]
\centering
\includegraphics[scale=0.3]{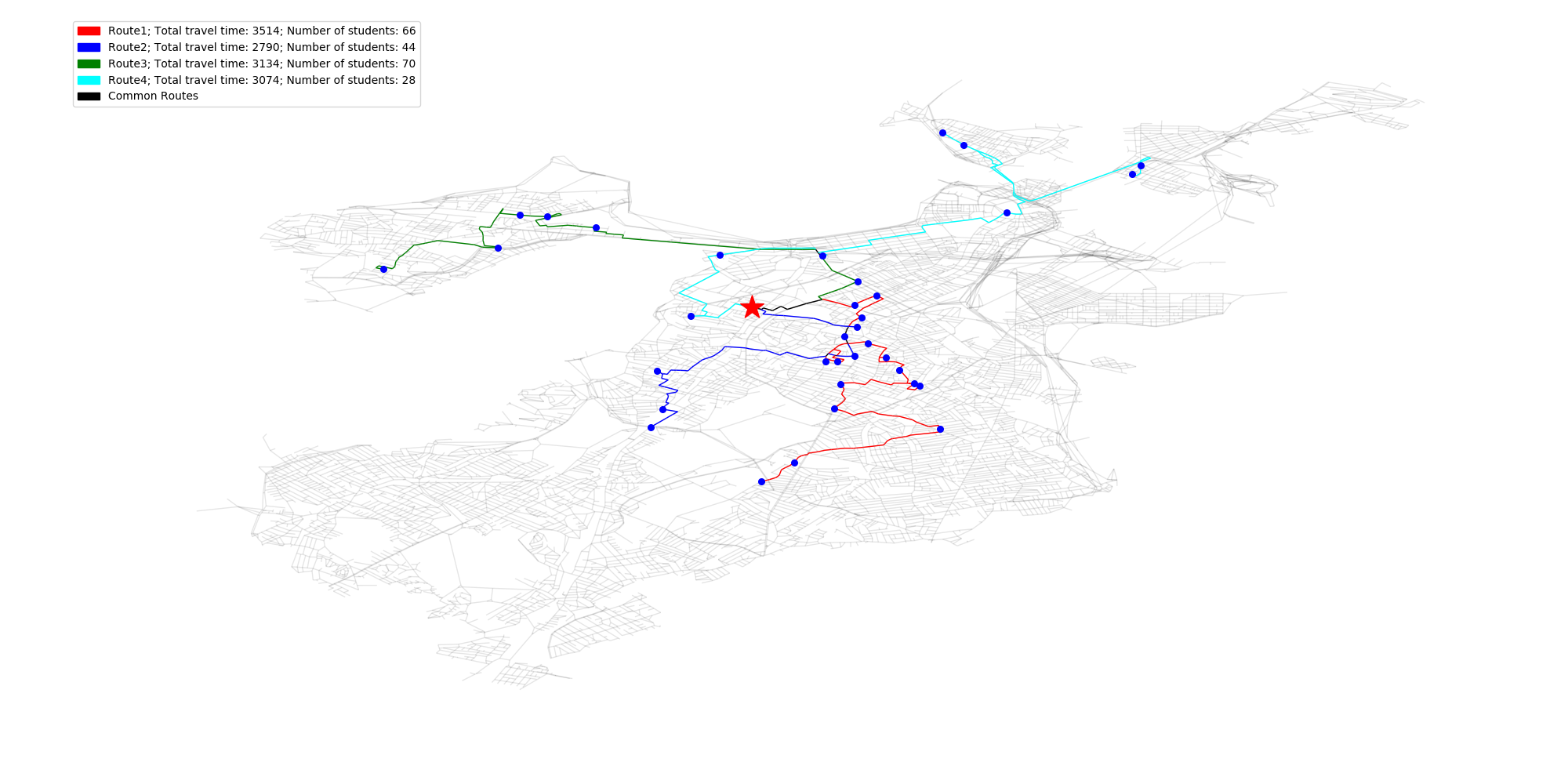}
\caption{School bus schedules for the Dick Bresciani with BiRD algorithm}
\end{figure*}

\begin{figure*}[h]
\centering
\includegraphics[scale=0.3]{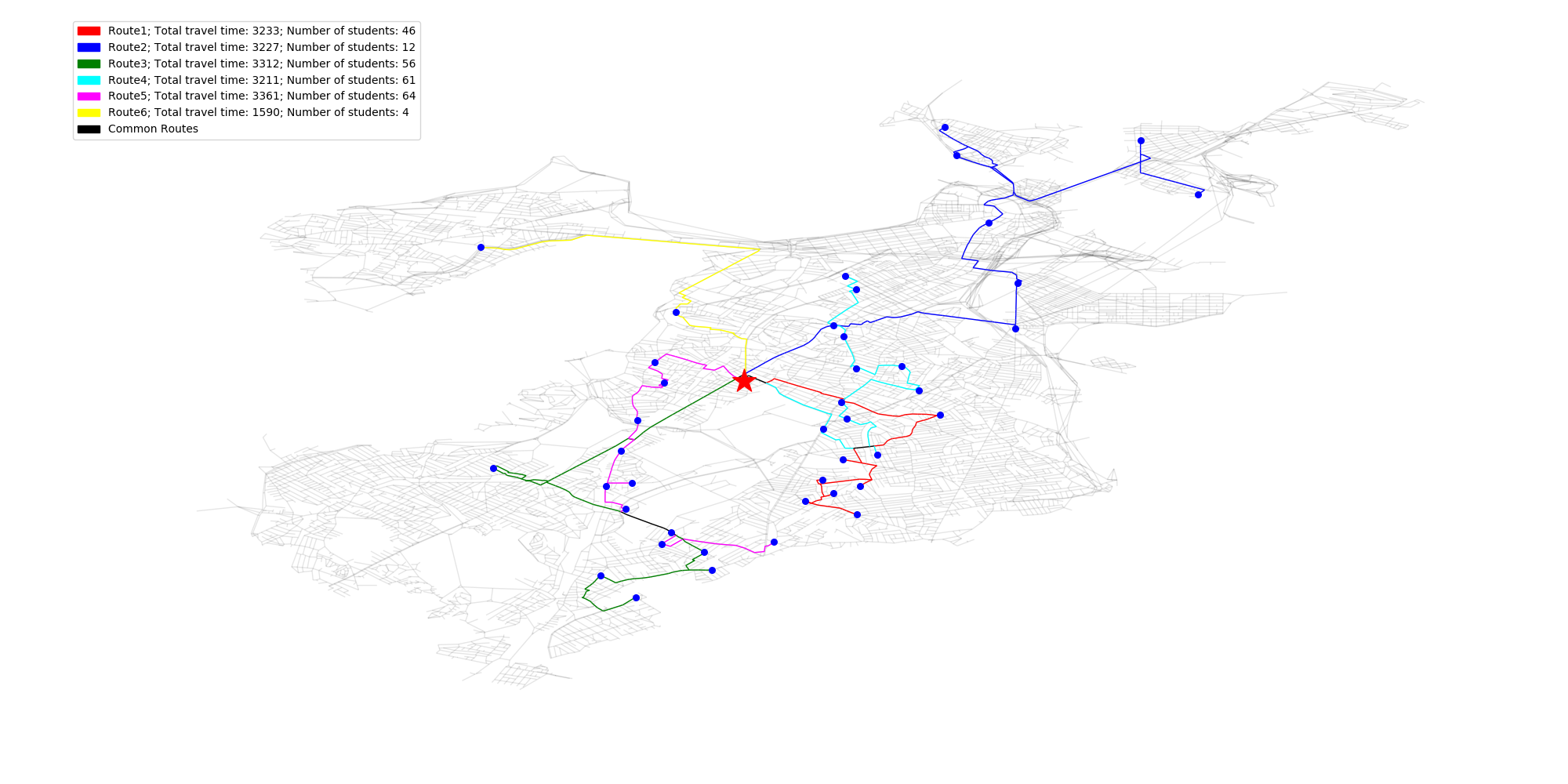}
\caption{School bus schedules for the Dutch Leonard with BiRD algorithm}
\end{figure*}

\begin{figure*}[h]
\centering
\includegraphics[scale=0.3]{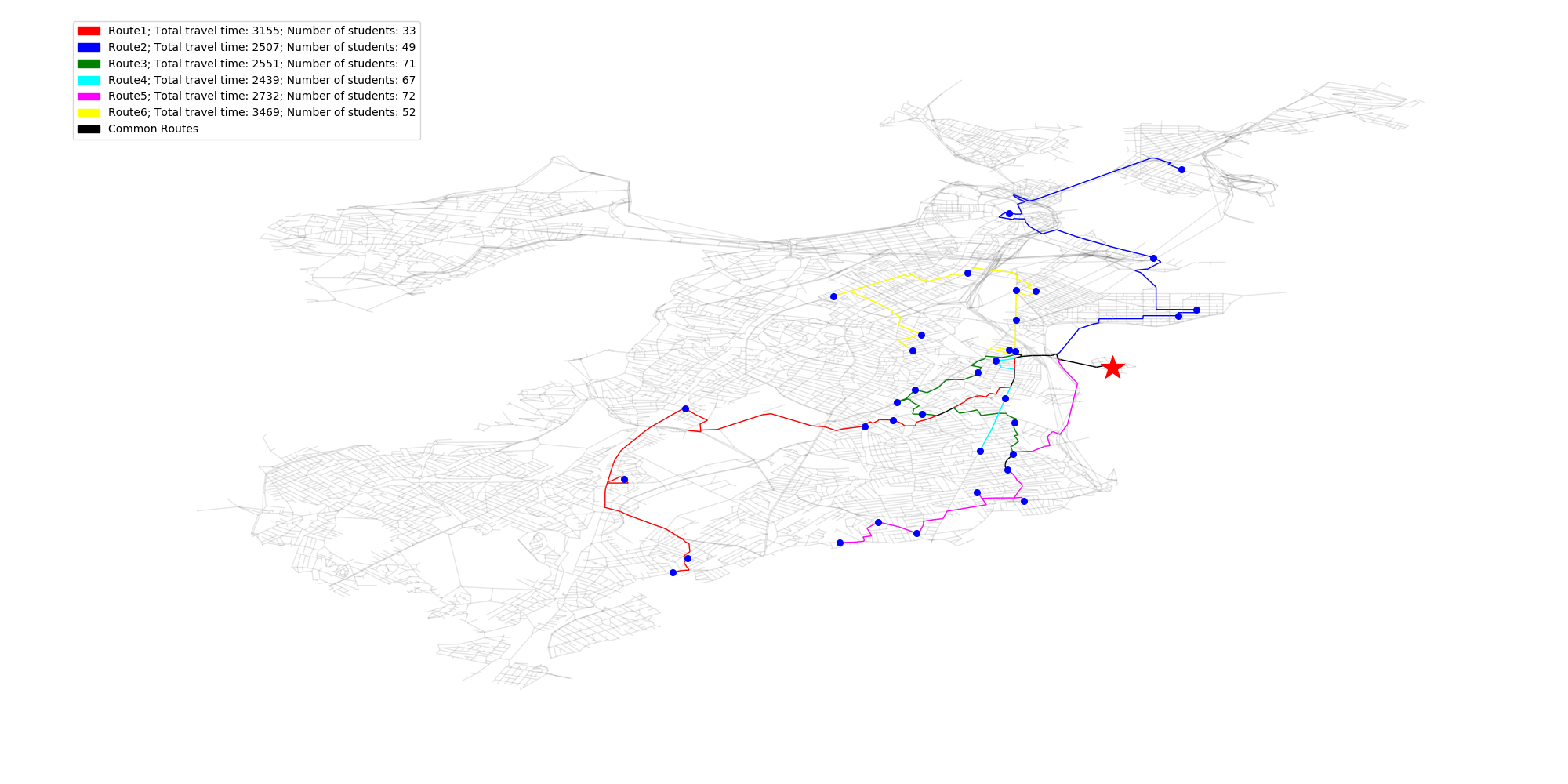}
\caption{School bus schedules for the Christian Vazquez with BiRD algorithm}
\end{figure*}

\begin{figure*}[h]
\centering
\includegraphics[scale=0.3]{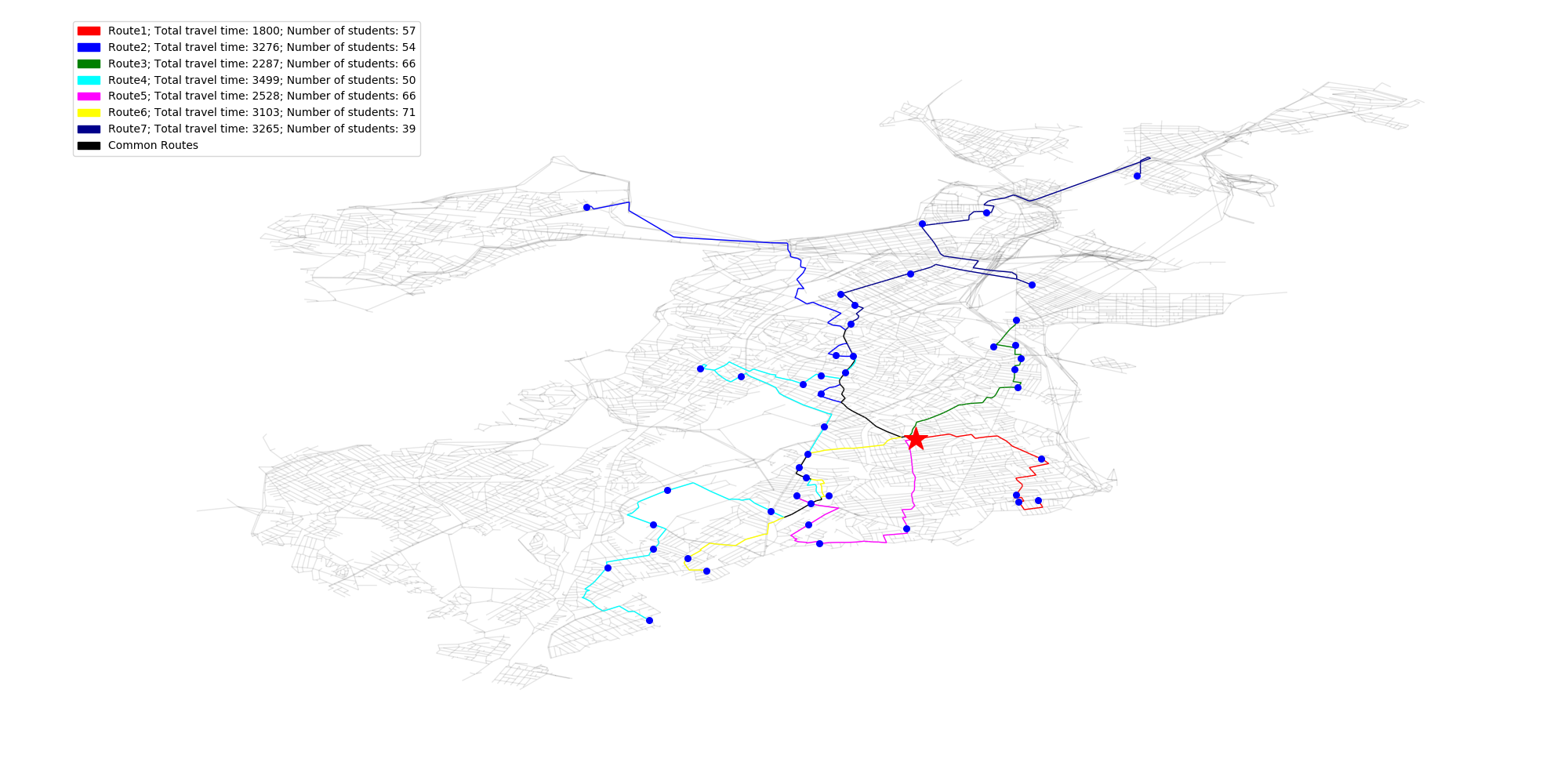}
\caption{School bus schedules for the Dennis Eckerley with BiRD algorithm}
\end{figure*}

\begin{figure*}[h]
\centering
\includegraphics[scale=0.3]{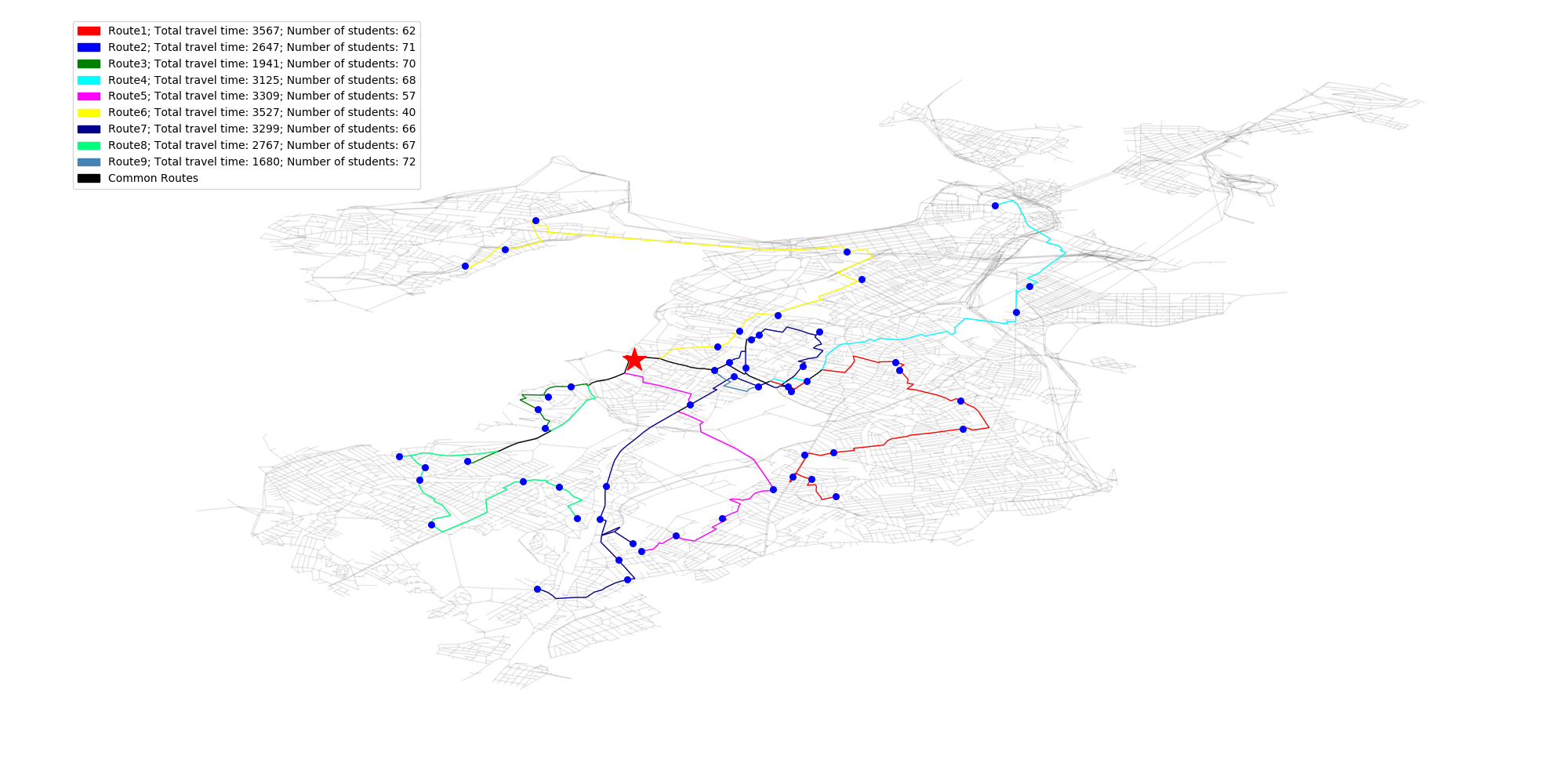}
\caption{School bus schedules for the Rick Ferrell with BiRD algorithm}
\end{figure*}

\FloatBarrier

\section{Additional sensitivity analyses}
\label{apend:addition_sensitivity}

In this section, we conduct the additional sensitivity analyses for both network compression techniques and the cost parameter for alternate modes using the large-scale BPS instance of the Dick Williams School with 183 students, where 28 students need door-to-door pickup.

\begin{figure}[h!]%
 \centering
 \subfloat[Number of buses]{\includegraphics[width=.49\linewidth]{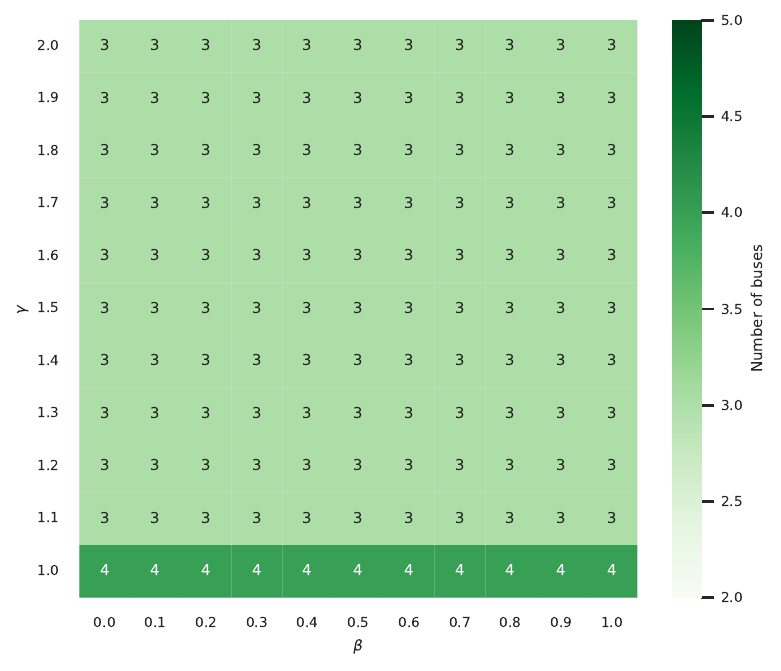}\label{fig:beta_gamma_sensitivity_a_addtion}}%
 \subfloat[Objective value]{\includegraphics[width=.49\linewidth]{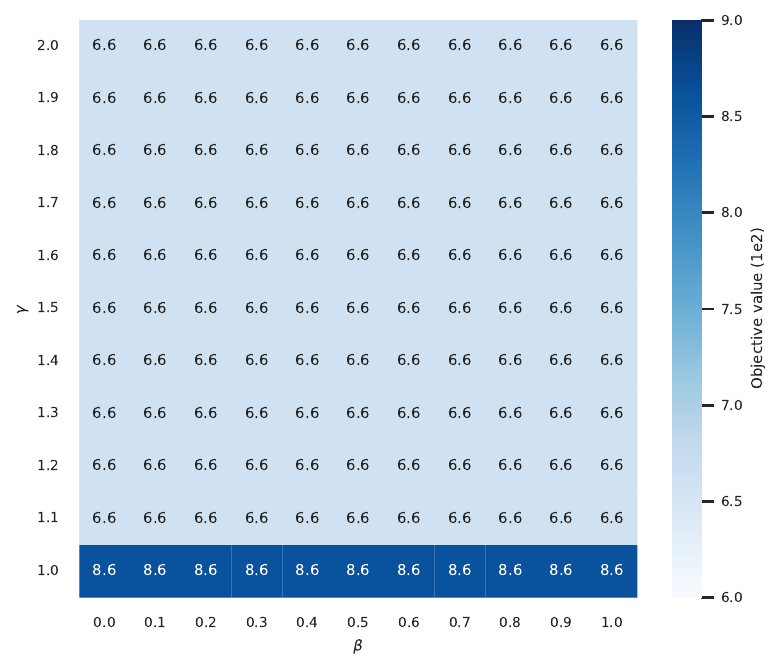}\label{fig:beta_gamma_sensitivity_b_addtion}}\\
 \subfloat[Size of trip configuration list]{\includegraphics[width=.49\linewidth]{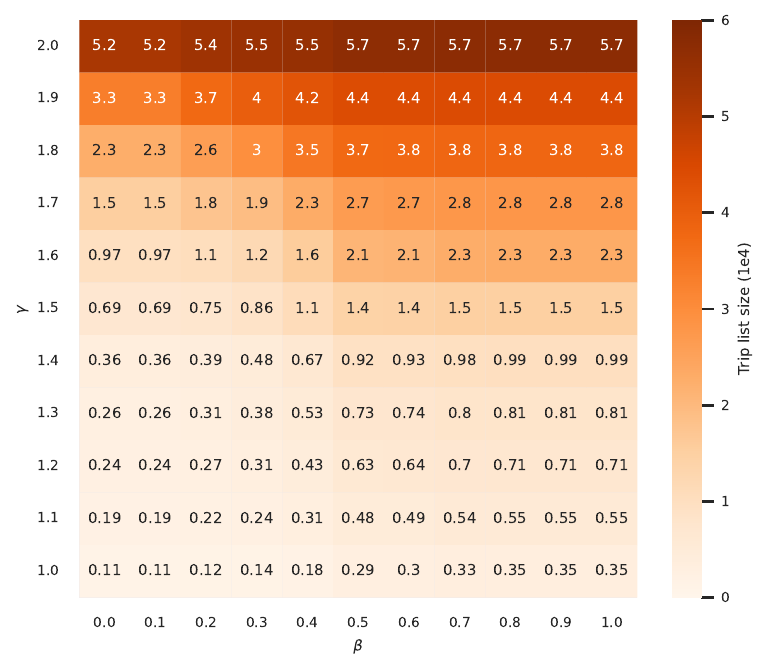}\label{fig:beta_gamma_sensitivity_c_addtion}}%
 \subfloat[Computation time]{\includegraphics[width=.49\linewidth]{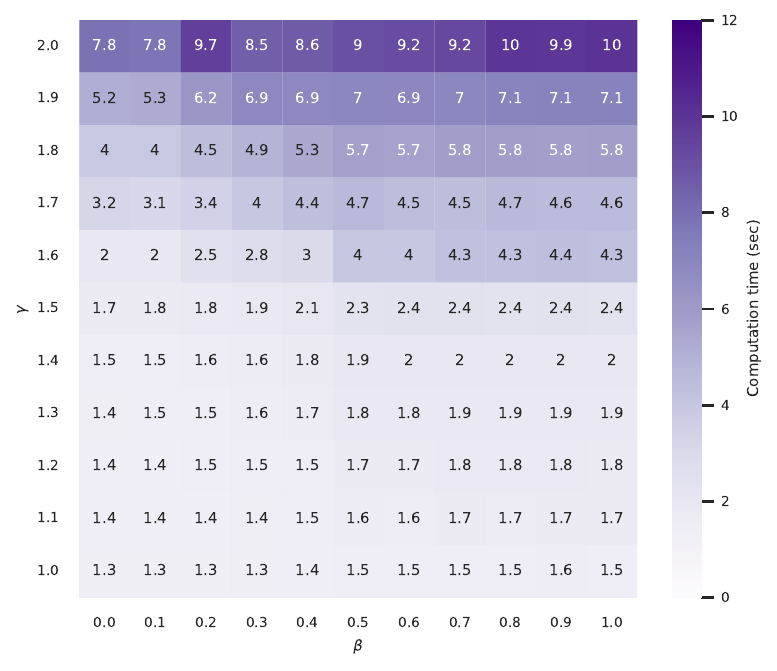}\label{fig:beta_gamma_sensitivity_d_addtion}}%
 \caption{Sensitivity analysis for parameters $\beta$ and $\gamma$ in network compression techniques.}%
 \label{fig:beta_gamma_sensitivity_addtion}%
\end{figure}

To measure the sensitivity of control parameters $\beta$ and $\gamma$ with respect the edge pruning technique, we choose $\beta$ values ranging from 1 to 2 with a step size of 0.1 and $\gamma$ values ranging from 0 to 1.0 with a step size of 0.1.
The sensitivity analyses results are shown in Figure \ref{fig:beta_gamma_sensitivity_addtion}.
In general, the computation time increases when the size of trip configuration list increases, which is induced by the increase of $\beta$ and $\gamma$.  
There are only two solutions over all combinations of $\beta$ and $\gamma$: 4 buses with objective value 860.54, 3 buses with objective value 659.98 (optimal bus schedules). 
We get the optimal school schedules when $\beta \geq 1.1$ regardless of $\gamma$.  

For the Dick Williams School instance without alternate modes, the size of trip configuration list without applying heuristics is 71672 and the optimal objective value is 659.98.
The minimum length of trip configuration list achieving the optimal school bus schedule is 1859 when $\beta = 1.1$ and $\gamma = 0 \text{ or } 0.1$.
With a proper choice of $\beta$ and $\gamma$, the optimal school bus schedule can be found by exploring only 2.6\% feasible trips.

To measure the sensitivity of the alternate modes cost parameter $\alpha_C^{direct}$, we consider a range from 0 (replaced by 0.1 since zero cost is unrealistic) to 5 with a step size of 0.5. 
The sensitivity analyses results are shown in Figure \ref{fig:alternate_sensitivity_addition}. When $\alpha_C^{direct}$ increases, the number of buses needed and the objective value increase while the number of students using alternate modes decreases.

\begin{figure}[h!]%
 \centering
 \subfloat[Number of buses]{\includegraphics[scale=.33]{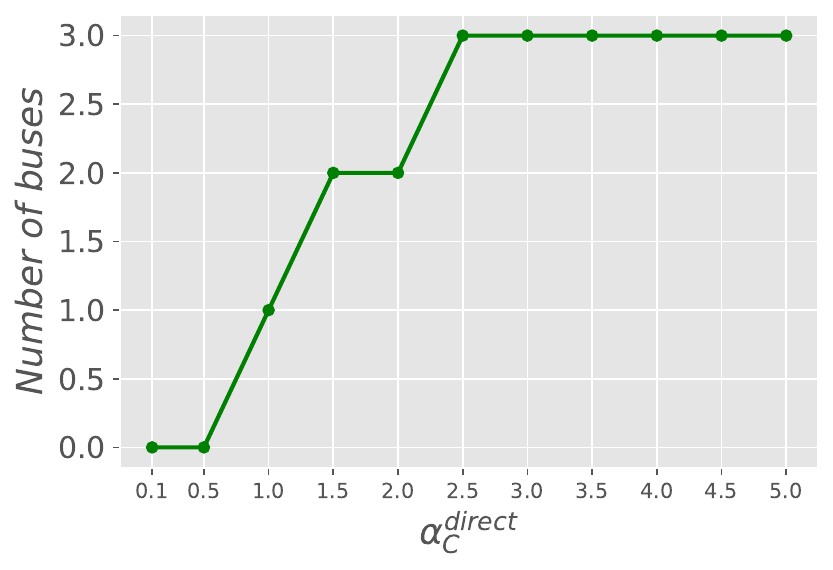}\label{fig:alternate_sensitivity_a_addition}}%
 \subfloat[Objective value]{\includegraphics[scale=.33]{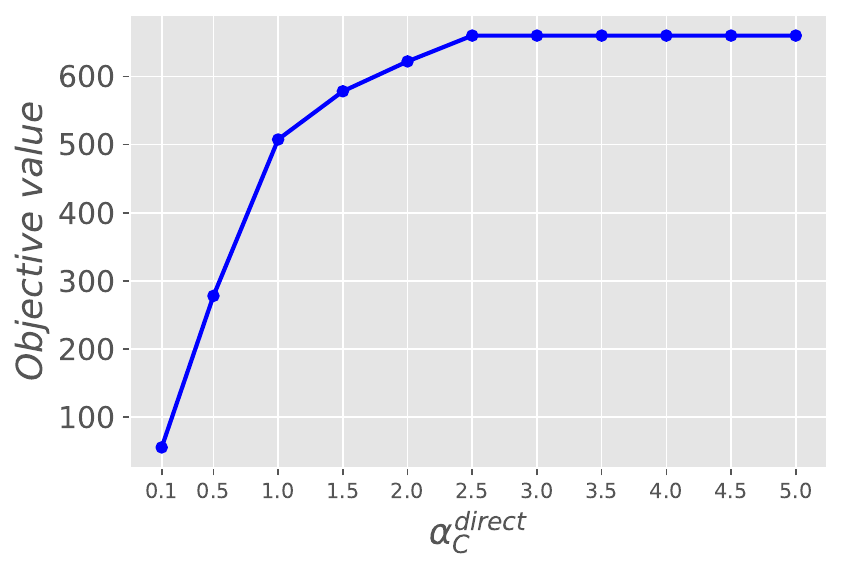}\label{fig:alternate_sensitivity_b_addition}}
 \subfloat[Number of students using alternate modes]{\includegraphics[scale=.33]{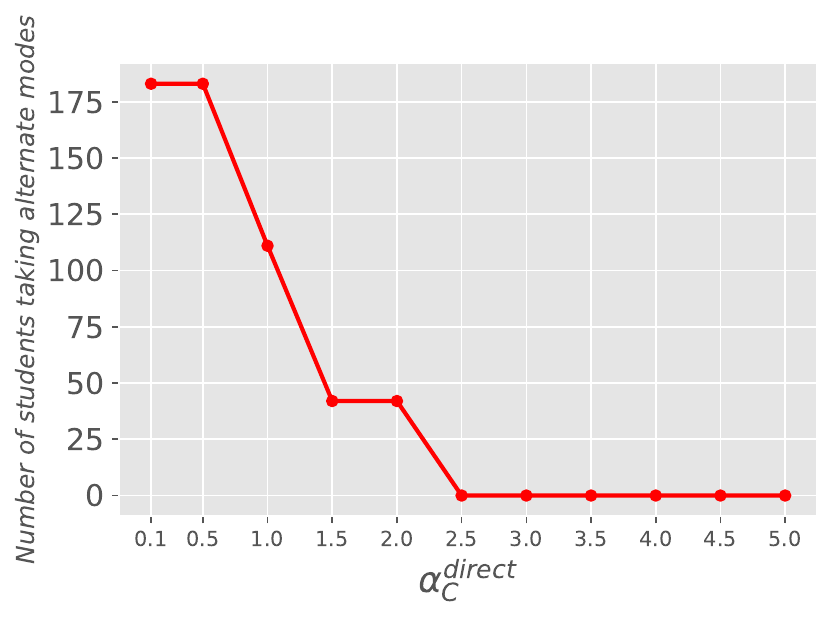}\label{fig:alternate_sensitivity_c_addition}}%
 \caption{Sensitivity analysis for the cost parameter $\alpha_C^{direct}$ of alternate modes.}%
 \label{fig:alternate_sensitivity_addition}%
\end{figure}











\end{document}